\documentclass[10pt,reqno]{amsart}
\sloppy
\usepackage{tikz-cd}
\usepackage[new]{old-arrows}
\usepackage{amsmath,amsthm,amscd,amsfonts,amssymb,euscript,latexsym,mathrsfs,tikz}
\usepackage{mathdots}
\usepackage{xcolor}
\usepackage{hyperref}
\hypersetup{colorlinks=true,linkcolor=blue, citecolor=blue}
\usepackage{frcursive}

\usepackage[matrix,arrow]{xy}
\input{amssym.def}
\input{xy}

\xyoption{all}
\theoremstyle{remark}

\pagestyle{plain}

\theoremstyle{plain}
\newtheorem*{theorem*}{Theorem}
\setcounter{tocdepth}{1}
\numberwithin{equation}{subsection}

\newtheorem{guess}{Theorem}[section]

\newtheorem{thm}[guess]{Theorem}
\newtheorem{lem}[guess]{Lemma}
\newtheorem{prop}[guess]{Proposition}
\newtheorem{Cor}[guess]{Corollary}
\newtheorem{defi}[guess]{Definition}

\theoremstyle{definition}
\newtheorem{rem}[guess]{Remark}

\newcommand{\Hfr}{\mathfrak{H}}

\newcommand{\gfr}{\mathfrak{g}}

\newcommand{\cV}{\mathcal{V}}

\newcommand{\cQ}{\mathcal{Q}}
\newcommand{\cO}{\mathcal{O}}

\newcommand{\cE}{\mathcal{E}}
\newcommand{\cG}{\mathcal{G}}

\newcommand{\cB}{\mathcal{B}}
\newcommand{\cF}{\mathcal{F}}
\newcommand{\cH}{\mathcal{H}}
\newcommand{\cM}{\mathcal{M}}
\newcommand{\cA}{\mathcal{A}}

\newcommand{\cR}{\mathcal{R}}
\newcommand{\cL}{\mathcal{L}}
\newcommand{\cT}{\mathcal{T}}
\newcommand{\cS}{\mathcal{S}}

\newcommand{\mf}{\mathfrak{f}}

\newcommand{\Pic}{\mathrm{Pic}}

\newcommand{\Mor}{\mathrm{Mor}}

\newcommand{\Spec}{\mathrm{Spec}}

\newcommand{\glue}{\mathrm{\tt glue}}

\newcommand{\lra}{\longrightarrow}

\newcommand{\hra}{\hookrightarrow}

\newcommand{\ra}{\rightarrow}
\newcommand{\ol}{\overline}



\newcommand{\spec}{{\rm Spec}\,}

\newcommand{\PP}{\mathbb{P}}

\newcommand{\ZZ}{\mathbb{Z}}
\newcommand{\GG}{\mathbb{G}}

\newcommand{\XX}{\mathbb{X}}
\newcommand{\CC}{\mathbb{C}}

\newcommand{\tcm}{\text{\cursive c}}
\newcommand{\tcs}{\small\text{\cursive s}}
\newcommand{\tcg}{\small\text{\cursive g}}
\newcommand{\tcb}{\it b}
\newcommand{\tcge}{\small\text{\cursive ge}}



\newcommand{\Id}{\mathrm{Id}}

\newcommand{\Aut}{\mathrm{Aut}}



\begin{document}

\title{Conformal blocks, parahoric torsors and Borel-Weil-Bott}

\author[V. Balaji]{Vikraman Balaji}
\address{Chennai Mathematical Institute }
\email{balaji@cmi.ac.in}

\author[Y. Pandey]{Yashonidhi Pandey}
\thanks{The research for this article was partially funded by the SERB Core Research Grant CRG/2022/000051.}
\address{ 
Indian Institute of Science Education and Research, Mohali Knowledge city, Sector 81, SAS Nagar, Manauli PO 140306, India}
\email{ ypandey@iisermohali.ac.in, yashonidhipandey@yahoo.co.uk}

\begin{abstract} Let $X$ be a smooth projective curve over a field $k$. Let $\mathcal{G}$ be a parahoric group scheme on $X$ as  in \cite{pr}. The aim of this article is to establish the basic themes of the theory of conformal blocks in the parahoric setting. Using Hecke correspondences, we reduce questions on parahoric stacks  to those on the reductive (or hyperspecial parahoric) stack via the Iwahori stack. Following this {\em reduction approach}, we set-up relationships between the cohomology of line bundles on various moduli stacks of torsors. These relationships give a proof of \cite[Conjecture 3.7]{pr}  in characteristic zero, the principle of propagation of vacua and a direct proof of the independence of central charge on base points. Projective flatness is recovered as a corollary of Faltings' construction of the Hitchin connection.  Using Teleman's basic results (\cite{bwb}), we deduce the analogous result that cohomology of line bundles on the stack of principal $G$-bundles vanishes in all degrees except possibly one over $\mathbb{C}$. Results on twisted vacua \cite{hongkumar} are obtained as immediate consequences.

\end{abstract}
\subjclass[2000]{14F22,14D23,14D20}
\keywords{Parahoric torsors, Moduli stack, Verlinde spaces, Conformal blocks, Vacua}
\maketitle

\small
\tableofcontents
\setcounter{tocdepth}{2} 
\normalsize


\section{Introduction}

This article is in the nature of a series of remarks. It arose out of an attempt to understand the classical theory of conformal blocks and cohomology of bundles in the setting of moduli stacks of torsors for parahoric Bruhat--Tits group schemes over a smooth projective curve. Classical Bruhat--Tits group schemes are defined on discretely valued Hensel rings.  In \cite{pr}, a global version $\cG$ of such group schemes was  introduced over a smooth projective curve $X$.  Let us state the framework of this article.

{\em We assume that the base field  $k$ is of arbitrary characteristics up to \S \ref{Picard}, and, of characteristic $0$ in \S \ref{ConformalBlocksection} and any section depending on it.} 

Let $G$ be an {\em almost simple,  simply-connected} connected group scheme over $k$. Let $X$ be a smooth projective curve over $k$ with function field $k(X)$ and let $\cG$ be a smooth, affine,  connected group scheme over $X$ such that its generic fibre $\cG_{k(X)}$ is {\em absolutely simple, semisimple, simply connected and splits over a tamely ramified extension}. We call such group schemes {\it tamely split}. We wished to provide  transparent strategies towards the computation of the cohomology of the space of sections of all line bundles and evaluation vector bundles on the moduli stack  $\cM(\cG)$ of  torsors on the curve $X$ under a Bruhat--Tits group scheme $\cG$. 

 By piecing together classically known facts under the new perspective of Hecke correspondences, we prove a conjecture of Pappas-Rapoport \cite[3.7]{pr}  in Theorem \ref{papparappa1}. The proof is essentially an observation. The propagation of vacua and subsequently the Verlinde isomorphism for the space of global sections for line bundles on $\cM_X(\cG)$  mentioned in \cite[8.2]{pr2024} are deduced in Theorem  \ref{propofvacua} and Corollary \ref{thmE}.

The classical theory of conformal blocks was enunciated over three decades ago by Tsuchiya-Ueno-Yamada \cite{tuy}, Beilinson-Schechtman \cite{besc}, Faltings \cite{faltings}, Beauville \cite{beau}, Laszlo, Sorger \cite{ls} and several others.  Our goal is to take up some basic features of these works and view them in the setting of parahoric torsors. In this spirit, we also approached the basic articles of Teleman \cite{telemanfusion},     \cite{telemanverlinde} and \cite{bwb}, on higher cohomologies of vector bundles on moduli stacks. We primarily rely on the articles cited above. Building on the basic computations in \cite{faltings} and \cite{bwb}, we go on to make some remarks on {\tt BWB} type results of Teleman \cite{bwb}.

  The landscape of parahoric torsors could be viewed in one single large frame. Our key tool  for this is the setting of Hecke correspondences  defined in \cite[8.2.1]{bs}  (see \cite[\S 7.5, 7.6]{me} for details as well as \eqref{IwahoritoGstack} and \eqref{Heckemodification}). We make  a finer examination of the Hecke-correspondence  basing ourselves on the foundational work of Bruhat--Tits \cite{bt}. The term Hecke correspondences in the setting of moduli goes back to the early work of Narasimhan and Ramanan.
 
 The central theme in the article may be called a {\em reduction approach}, i.e., using the Hecke motif to reduce questions on parahoric stacks  to those on the reductive (or hyperspecial parahoric) stack via the Iwahori stack. This reduction couples well with the uniformization theorem whereby, one has the flexibility to play with the base points for the atlases (denoted $A \subset X$) as well as the ramification points where the parahoric structures are prescribed (denoted $\cR \subset X$). These correspondences can be flipped around so that one can answer questions even on the reductive stack via the parahoric ones  (see \cite{bpnr}). An illustration of this is in the proof of the propagation of vacua which makes it transparent.
 
 We note that this mobility across spaces  via correspondences is not quite possible in the equivariant setting.  The equivariant moduli space on a curve $Y$ when interpreted as a parahoric stack on the curve $Y/\Gamma$ now forgets the group $\Gamma$ and has the flexibility to utilise varying ramified covers $Y$ keeping the parahoric data constant, especially because this data is completely {\em local}. As an illustration of this phenomenon, we refer the reader to  \S \ref{elaboration}.
 
 As applications, we prove the following: 
\begin{itemize}
\item X. Zhu has shown (see \cite[Proposition 4.1, and page 8]{zhu})  that the central charge morphism (see \S \ref{centralchargestack}) on the Picard group of $\cM(\cG)$  is independent of the base point chosen to define it. {\em In Corollary \ref{ccconstantcy}, we give a new proof of this result}. The characteristic of $k$ is arbitrary here.

\item The article by Hong and Kumar \cite{hongkumar} (cf. also \cite{swarnavatanmay} for related material)  and the more recent one \cite{hongkumar2}, takes up the study of twisted conformal blocks. The objects of study involve a Galois cover $\psi:Y \to X$ with Galois group $\Gamma$ and simple Lie algebras $\gfr$ equipped with a homomorphism $\rho:\Gamma \to \text{Aut}(\gfr)$. { Barring a few minor exceptions involving the tamely split cases, which we have elaborated, the results in {\em loc.cit} follow as corollaries of our main results.} 
\item We address the question of higher cohomology of line bundles on $\cM(\cG)$ in the spirit of Teleman's {\tt BWB} theorems. {\em In the split case, this also answers a  query in \cite[page 169]{pr}}. We get interesting combinatorial integers $\tcb_{_{\pi}}$ \eqref{b1} and $\tcb_{_h}$ \eqref{b2} relating the Hecke transforms of the cohomologies of bundles on the parahoric stack $\cM(\cG)$ with those  on $\cM(\Hfr)$ the stack of torsors for the semisimple group scheme $\Hfr$.
\item The article by Biswas-Mukhopadhyay-Wentworth \cite{biswas-mukho-wentworth} shows the existence of a canonical Hitchin connection on the moduli stack of parabolic $G$-bundles for a family of curves $C \to S$.  This along with the parahoric cases, is essentially deduced as an immediate corollary of the classical theorem of Faltings \cite[Theorem V.2, page 563]{faltings1993}. This also gives as a corollary, the result \cite[Theorem C]{hongkumar} (see \S \ref{recovery}).
\end{itemize}

After we finished this work, we came across the article \cite{dh} by Damiolini and Hong. There are natural similarities in the proof of Theorem \ref{papparappa1}, but the results in our article are quite different and the setting somewhat more general (for details on this, see \S \ref{clarifications}, \S\ref{clarifications1}, Corollary \ref{moregen} and Proposition \ref{redtoIwa}).

{\it Acknowledgments}: {The second author thanks Norbert Hoffmann for discussions in 2018 and Mary Immaculate College for hospitality. We sincerely thank an anonymous referee for the generous suggestions and comments on the earlier versions of this article. We believe that the comments have led to a better exposition in many ways.}

\section{Loop groups and Bruhat--Tits group schemes} \label{lgpthedata}

\subsection{Notations and Conventions} Our base field will always be $k$. Let $X$ be a smooth projective curve over $k$. For $x \in X$, let $\cO_x$ denote the {\it complete} local ring at $x$ (i.e. $\cO_x \simeq k[\![t]\!]$)  and ${K}_x \simeq k(\!(t)\!)$ be its field of fractions, the corresponding local field.   
The set $\cR \subset X$ will denote the set of ramification points where the parahoric structure gets prescribed. For $x \in X$, let $\mathbb{D}_x :=\Spec({\cO}_x)$ and $\mathbb{D}^{^\times}_x :=\Spec(K_x)$.

For our purposes, $\cG$ will be a smooth affine connected $X$-group scheme such that $\cG_{k(X)}$ is an {\em absolutely simple, semisimple, simply-connected} ${k(X)}$-group which {\em splits over a tamely ramified extension of ${k(X)}$}.  Classical Steinberg theory shows also that the group $\cG_{k(X)}$ is {\em quasi-split}.   Since for us $k$ is algebraically closed, at the local fields $K_x$, the groups $\cG_{K_x}$ are again automatically quasi-split by Steinberg's theorem.
  
 We shall fix a maximal torus $T$, its apartment $\cA_T$ and an origin $v_0$ of $\cA_T$. At a closed point $x$ of $X$, we have parahoric subgroups $\cG(\cO_x) \subset \cG({K}_x)$ in the sense of Bruhat--Tits (see \cite{bt}).   Let ${\tt F}_x$ be a facet of $\cA_T$ determined by this inclusion. We choose a ${K_x}$-Borel subgroup $B$ in the semisimple group $\cG_{{K_x}}$ containing $T$. Let ${\bf a}_0$ denote the unique alcove in $\cA$ whose closure  contains $v_0$ and is contained in the finite Weyl chamber determined by our chosen Borel subgroup. This determines a set $\mathbb{S}$ of simple {\it affine roots} $\alpha$.  Let $\theta$ be the highest root of $\cG_{K_x}$ and $\theta^{\vee}$ denote the highest coroot of $\cG_{K_x}$. Following Tits \cite{titsjoa}, we denote by $\alpha_{_0}:=1 - \theta$. We will identify the set $\mathbb{S} - \alpha_{_0}$  with the set of simple roots of $\cG_{K_x}$ relative to $B$.

For a root $\alpha$ of $\cG_{K_x}$, let $\alpha^{\vee}$ denote the corresponding coroot. For simple roots $\alpha \in \mathbb{S} - \alpha_{_0}$ of $\cG_{K_x}$, let $a_{\alpha^\vee}$ be integers defined by the relation  \begin{equation} \label{corootcoeff}  \theta^\vee=\sum_{\alpha \in \mathbb{S} - \alpha_{_0}} a_{\alpha^\vee} \alpha^\vee.  \end{equation} 
By \cite[\S 6.1]{kac}, $a_{\alpha_{_0}}^{\vee}=1$ in all cases.

\subsection{Some attributes of facets}\label{attributesoffacets}
Let ${\tt F} \subset \cA$ be a facet. We denote by $\cG_{{\tt F}}$ the parahoric group scheme over a discrete valuation ring $\cO$ with fraction field $K$ (see \cite[Section 1.7]{bt}).  Recall the loop group and the parahoric loop group representing the following functors:
\begin{equation} 
R \mapsto L\cG_{{\tt F}}(R) := \cG_{{\tt F}}(R \otimes_k K)~~ (= \cG_{{\tt F}}(R(\!(t)\!)))
\end{equation}
\begin{equation}
R \mapsto L^+\cG_{{\tt F}}(R) := \cG_{{\tt F}}(R \otimes_k \cO)~~ (= \cG_{{\tt F}}(R[\![ t ]\!]))
\end{equation}

The flag variety associated to a facet ${\tt F}$ is the quotient   
\begin{equation} \label{flagv}
\cF\ell_{{\tt F}} := L\cG_{{\tt F}}/L^+\cG_{{\tt F}}
\end{equation}
of fpqc-sheaves. It is represented by an ind-projective scheme over $k$ .

We denote the subset of simple affine roots  not vanishing on ${\tt F}$ by  \begin{equation}\label{sx}
S({\tt F}) \subset \mathbb{S}.
\end{equation}
Let $\Omega_{\tt F}=\{ \omega_\alpha | \alpha \in S({\tt F})\}$ denote  the fundamental weights of $G_K$ corresponding to $S({\tt F})$ where for $\alpha_{_0}$ we take the trivial weight.  For any facet ${\tt F}$ we define 
\begin{equation} \label{localofF} l({\tt F})= {\tt gcd} \{ a_{\alpha^\vee} | \alpha \in S({\tt F})  \}.
\end{equation} Let $\cR \subset X$ be a finite subset. For a set $\{{\tt F}_x \}_{x \in \cR}$ of facets, we define \begin{equation} \label{f}
 \ell := {\tt lcm} \{ l ({\tt F}_x) | x \in \cR \}.
 \end{equation}

\subsection{From Bruhat--Tits \cite{bt}}\label{frombt} We recall some foundational material from {\em loc.cit} pages 134-138. The perspective we wish to give on these results, especially in relationship to the theme of the present article make these  central to the constructions in this article.

Let $\cA(\tt F)$ denote the {\em star of $\tt F$} in the simplicial complex $\cA$, i.e., the set of facets $\tt F'$ of $\cA$ such that $\tt F \subset \bar{\tt F}'$ and ordered by the relation $\tt F' \leq \tt F''$ if $\tt F' \subset \bar{\tt F}''$.

Let $\cG_{\tt F}$, and $\cG_{\tt F'}$ be the associated Bruhat--Tits group schemes (which we assume to be connected) on $\spec \cO$ with generic fibre $\cG_K$. Then, the identity map on $\cG_K$ extends to a morphism:
\begin{equation}
i_{_{\tt F,\tt F'}}:\cG_{\tt F'} \to \cG_{\tt F}.
\end{equation}
Let $\bar{i}_{_{_{{\tt F},{\tt F}'}}}$ denote the induced morphism:
\begin{equation}
\bar{i}_{_{_{{\tt F},{\tt F}'}}}:\cG_{_{{\tt F'},k}}  \to \cG_{_{{\tt F},k}}
\end{equation}
over the residue field $k$ (which for us is algebraically closed). By \cite[4.6.33]{bt}, the map:
\begin{equation}
F' \mapsto \text{Im}(\bar{i}_{_{_{{\tt F},{\tt F}'}}})
\end{equation}
is an isomorphism of the ordered subsets of the star $\cA({\tt F})$ of ${\tt F}$ onto the set of parabolic subgroups of the closed fibre $\cG_{_{_{{\tt F},k}}}$. For an independent self-contained proof of this fact in the generically split case, see \cite[Corollary 4.3.12]{pp}.

Furthermore, the inverse image of $\text{Im}(\bar{i}_{_{_{{\tt F},{\tt F}'}}})$ in $\cG_{\tt F}(\cO)$ is equal to $\cG_{\tt F'}(\cO)$.  Let 
\begin{equation}
P_{_{_{{\tt F},{\tt F}'}}}:= \text{Im}(\bar{i}_{_{_{{\tt F},{\tt F}'}}})
\end{equation} 
be the parabolic subgroup of $\cG_{_{{\tt F},k}}$ determined by ${\tt F}' \in \cA(F)$. In other words, the subspace of sections in $\cG_{\tt F}(\cO)$ whose canonical image under the residue map lies in the parabolic $P_{_{_{{\tt F},{\tt F}'}}}$ is precisely $\cG_{{\tt F}'}(\cO)$. The last statement implies the following results which are immediate and are basic for our perspective.
\begin{enumerate}
\item We consider the loop analogues of the parahoric subgroups and we see immediately that for any pair $(\tt F, \tt F')$ such that $\tt F \subset \bar{\tt F}'$, we have an inclusion 
\begin{equation}\label{BTconseqs}
L^+\cG_{{\tt F'}} \subset L^+\cG_{{\tt F}}
\end{equation}
and the fpqc quotient $L^+\cG_{{\tt F}}/L^+\cG_{{\tt F'}}$ is a smooth projective homogeneous $k$-variety isomorphic to ${\cG_{_{{\tt F},k}}\over P_{_{_{{\tt F},{\tt F}'}}}}$.

\item We have the group scheme isomorphism  
\begin{equation}\label{nerondilat}
\cG_{{\tt F}'} \simeq \cG^{^{P_{_{_{{\tt F},{\tt F}'}}}}}_{\tt F}  
\end{equation} of $\cG_{{\tt F}'}$ with the {\em Neron blow-up}, or dilatation of the group scheme $\cG_{\tt F}$ along the subgroup $P_{_{_{{\tt F},{\tt F}'}}}\subset \cG_{_{{\tt F},k}}$.

\item Let $\tt F_{_j} \subset \bar{\tt F'}$, $j = 1,2$, i.e., the facet $\tt F'$ lies in the star $\cA(\tt F_{_j})$ for $j =1,2$. Then we have a diagram of morphisms of affine group schemes over $\spec \cO$:

\begin{equation}\label{heckeatgplevel}
\begin{tikzcd}
	& {\cG_{\tt F'}} \\
	{\cG_{_{\tt F_1}}} && {\cG_{_{\tt F_2}}}
	\arrow["{i_{_{\tt F_1,\tt F'}}}"', from=1-2, to=2-1]
	\arrow["{i_{_{\tt F_2,\tt F'}}}", from=1-2, to=2-3]
\end{tikzcd}\end{equation} 

\item We now make a few key observations regarding the affine flag varieties $\cF \ell_{\tt F}$ \eqref{flagv}. Given a pair $(\tt F, \tt F')$ such that $\tt F \subset \bar{\tt F}'$, we get the canonically induced morphism:
\begin{equation}\label{theflagfibres}
\mf_{_{_{{\tt F},{\tt F}'}}}:\cF\ell_{\tt F'} \to \cF\ell_{\tt F}
\end{equation}
which is an \'etale locally trivial fibration with fibre type isomorphic to the homogeneous space ${\cG_{_{{\tt F},k}}\over P_{_{_{{\tt F},{\tt F}'}}}}$.

\item Let $\mathbf a$ correspond to the open facet giving rise to the Iwahori group scheme $\cG_{\mathbf a}$. For any facet $\tt F$, we have a morphism $i_{_{\tt F,\mathbf a}}:\cG_{\mathbf a} \to \cG_{\tt F}$. In this situation, the parabolic subgroup $P_{_{_{{\tt F},{\mathbf a}}}}$ is in fact a Borel subgroup of the closed fibre ${\cG_{_{{\tt F},k}}}$ (obtained by the inverse image of a Borel of the Levi quotient) and the fibre of $\mf_{_{_{{\tt F},{\mathbf a}}}}$ is isomorphic to 
\begin{equation}\label{fromiwahori}
{\cG_{_{{\tt F},k}}}\over B_{_{{\tt F},k}}.
\end{equation}

\end{enumerate}
\section{The moduli stacks $\cM(\cG)$ and their uniformization} \label{modulistack}
\subsection{The group scheme} \label{gpsch} For $x \in X$, let $\mathbb{D}_x :=\Spec({\cO}_x)$ and $\mathbb{D}^{^\times}_x :=\Spec(K_x)$. Let $\cR \subset X$ be a non-empty finite set of closed points. For each $x \in \cR$, we choose a facet ${\tt F}_x \subset \cA_T$. As in the introduction, let $\cG$ be a group scheme on $X$ which is semisimple on $X - \cR$, splits after going to a tamely ramified extension $L$ of the function field of $X$, with absolutely simple fibers and restricts to $\mathbb{D}_x$ as the parahoric group scheme $\cG_{{\tt F}_x} \ra \mathbb{D}_x$ corresponding to ${\tt F}_x$.  Throughout this article, such a group scheme will be called {\bf tamely split}.

A {\it quasi-parahoric} torsor $\cE$ is simply a $\cG$--torsor on $X$ for a parahoric group scheme $\cG$.  Let $\cM(\cG)$ denote the moduli stack of quasi-parahoric $\cG$-torsors on $X$. This is an algebraic stack (see \cite[Proposition 4.1]{heinloth}). 

At each marking $x \in \cR$, we choose an alcove $\mathbf{a}_x $ in whose closure the facet ${\tt F}_x$ lies. Let $\cG^{\tt I}$ be the group scheme (of the Iwahori type) obtained from $\cG$ by gluing with  $\mathcal{G}_{\mathbf{a}_x}$ instead of $\mathcal{G}_{{\tt F}_x}$ (see \cite[\S 7.5]{me} or \cite[\S 6.1]{pp}). The morphism $\cG^{\tt I} \ra \cG$ of group schemes induces by an extension of structure group, the following morphism of algebraic stacks: 
\begin{equation} \label{IwahoritoGstack}
\pi: \cM(\cG^{\tt I}) \ra \cM(\cG)
\end{equation} 
We  will sometimes abbreviate $\cM:= \cM(\cG)$ and set $ \cM^{\tt I}:=\cM(\cG^{\tt I})$.

\subsection{Uniformization of $\cM(\cG)$} \label{uniformization}
 Let $A \subset X$ be a finite non-empty set of closed {\bf atlas} points. These are also called  {\it punctures}  in Teleman \cite{bwb}. For each $x \in A$, let ${\tt F}_x \subset \cA$ denote the facet determined by $\cG$ in a formal neighbourhood around $x$. For each $x \in A$, let us choose a uniformizer $t_x \in {\cO}_{x}$. Then we have the map
\begin{equation}\label{glue} 
\tcg:= \glue_{\{t_x| x \in A \}}: \prod_{x \in A} \cF\ell_{{\tt F}_x} \ra \cM(\cG),
\end{equation}
which on the coset $(\cdots, f_x L^+ \cG_{{\tt F}_x}, \cdots)_{\{x \in A \}}$ glues the trivial $\cG$-torsor on $X - A$ with the trivial $\cG_{{\tt F}_x}$-torsor on $\Spec({\cO}_{x})$ via $f_x$. It will be convenient to abbreviate this morphism as $\tcg$. By \cite[Theorem 4]{heinloth}, this morphism has \'etale local sections. It follows that the morphism ${\pi_{_{\tcs}}}: \cM^{\tt I} \ra\cM(\cG)$ is representable.

An $R$-point of $\prod_{x \in A} \cF\ell_{{\tt F}_x}$ classifies $\cG$-torsors on $X \times \Spec(R)$ together with a section on $(X - A) \times \Spec(R)$. The map $\prod_{x \in A} \cF\ell_{{\tt F}_x}  \ra \cM$ forgets the section and the ind-scheme $\cG(X - R)$ acts on $\prod_{x \in A} \cF\ell_{{\tt F}_x}$ by changing the section.  By the uniformization theorem for $\cG$-torsors (cf \cite{heinloth}) we have an isomorphism of stacks
\begin{eqnarray} \label{unif1}
\tcg: \prod_{x \in A} \cF\ell_{{\tt F}_x} \ra \cM(\cG) \simeq \cG(X - A) \backslash \prod_{x \in A} \cF\ell_{{\tt F}_x}.
\end{eqnarray}
{\em We emphasize that we allow for the atlas point set $A$ to be independent of the markings $\cR$ for flexibility of application}.  In particular, we allow for $A =\cR$ as in \cite{me} for Proposition \ref{redtoIwa} or for a $y \in X - \cR$, $A =\{y \}$ as in \cite{ls} for Theorem \ref{propofvacua}.

\section{The Hecke Correspondence} \label{HeckeModdiag}

Let $\cG$ be a parahoric Bruhat--Tits group scheme whose restriction to $X - \cR$ is semisimple. Let $\tcs = \{\tcs_x\}_{x \in \cR}$ denote the data of gluing functions for the group scheme $\cG$, and $x \in \cR$, let ${\tt F}_x \subset \cA$ denote the facet determined by $\cG$ in a formal neighbourhood around $x$. Recall that for each $x \in \cR$ we have chosen an alcove $\mathbf{a}_x $ in whose closure the facet ${\tt F}_x$ lies. We denote by $v_{0,x}$, the unique vertex of $\mathbf{a}_x$ such that $\alpha_{_0}$ takes a value different from that of the other vertices of $\mathbf{a}_x$. Let $\Hfr$ denote  {\em a semisimple, simply connected group scheme} on $X$ with absolutely simple fibers isomorphic to $\cG$ on $X - \cR$ and isomorphic to $\mathcal{G}_{v_{0,x}}$ on $\mathbb{D}_x$ with gluing functions $\tcs$. Thus,  we have a Hecke correspondence diagram of group schemes by \eqref{heckeatgplevel}:
\begin{equation}\label{heckecorrespgpscheme} \begin{tikzcd}
	& \mathcal{G}^{\tt I} \\
	{\cG} && {\Hfr} 
	\arrow[ from=1-2, to=2-1]
	\arrow[ from=1-2, to=2-3]
\end{tikzcd}\end{equation}

Thus, we may view the stack $\cM(\Hfr)$ also as a stack of  parahoric torsors with {\em hyperspecial structures} at the marked points $\cR$.

To avoid any confusion, we recall that in the literature, one denotes the constant group scheme by $G = G \times_k X$ and by $\cM(G)$ we mean the stack of torsors for the group scheme obtained by the trivial gluing at all markings.

Let $\cM^{\tt I}$ be an Iwahori stack at the markings $\cR$ and gluing data $\tcs$. We  obtain a representable morphism induced by \eqref{heckecorrespgpscheme}  
\begin{equation} \label{Heckemodification} h_{_{\tcs}}: \cM_{_X}(\mathcal{G}^{\tt I}) \rightarrow \cM_{_X}(\Hfr),
\end{equation} 
which will be convenient to denote by the same notation $h_{_{\tcs}}$. Its fibers are $\prod_{x \in \cR} G/B$.    

Again, if we begin with a parahoric stack $\cM_{_X}(\cG)$ where $\cG$ is a parahoric group scheme on $X$ with parahoric structures at $\cR$ and given by  gluing data $\tcs$, then by consistently choosing the same data, {\sf{ associated to the given choice of markings $\cR \subset X$}}, we have the following  Hecke correspondence diagram induced by \eqref{heckecorrespgpscheme}:
\begin{equation}\label{heckecorresp} \begin{tikzcd}
	& {\cM(\mathcal{G}^{\tt I}) = \cM^{\tt I}} \\
	{\cM(\cG)} && {\cM(\Hfr)} 
	\arrow["\pi_{_{\tcs}}", from=1-2, to=2-1]
	\arrow["h_{_{\tcs}}"', from=1-2, to=2-3]
\end{tikzcd}\end{equation}
where $\pi_{_{\tcs}}$ is also an \'etale local fibration having products $\prod_{_{x \in \cR}} {{\cG_{_{{\tt F_{x}},k}}}\over B_{_{{\tt F_{x}},k}}}$ of flag schemes \eqref{fromiwahori} as fibres (see \S \ref{frombt} for details).
\begin{rem} The notation $\pi_{_{\tcs}}$ and $h_{_{\tcs}}$ reflect the fact that the morphism goes towards the ``parahoric stack" or the ``hyperspecial parahoric stack". \end{rem}

\subsection{Construction of compatible atlases $\pi: \cM^{\tt I} \ra \cM(\cG)$ of \eqref{IwahoritoGstack}}  

Note that the trivial $\cG^{\tt I}$ torsor on $X-A$ has Iwahori structures on the points $\cR -A$.
For an arbitrary $A$, by the uniformization theorem \eqref{unif1} a point in $$\prod_{x \in A \cap \cR} \cF\ell_{\mathbf{a}_x} \times \prod_{x \in A - \cR} \cF\ell_{{\tt F}_x}$$ provides a gluing for the trivial $\cG^{\tt I}$ torsor on $X - A$ with $\cG_{\tt{F}_x}$ where $x \in A -\cR$ and with $\cG_{\mathbf{a}_x}$ where $x \in A \cap \cR$. Taking the product map of $\mf_{_{_{{\tt F},{\tt F}'}}}$ of \eqref{theflagfibres} for $x \in A \cap \cR$ to define  $p_A$, the following diagram is commutative :
\begin{equation} \label{fibproddiag}
\xymatrix{
\prod_{x \in A \cap \cR} \cF\ell_{\mathbf{a}_x} \times \prod_{x \in A - \cR} \cF\ell_{{\tt F}_x} \ar[rrr]^{\tcg^{\tt{I}}} \ar[d]^{p_{_{A}}} &&& \cM^{\tt I} \ar[d]^{\pi_{_{\tcs}}} \\
\prod_{x \in A} \cF\ell_{{\tt F}_x} \ar[rrr]^{\tcg} &&& \cM(\cG)
}
\end{equation}
When $A=\cR$,  the diagram becomes cartesian. In the rest of the article, we will often suppress the subscript $A$ from the morphisms.

Taking the product of $\mf_{_{_{{\tt F},{\tt F}'}}}$ of \eqref{theflagfibres} we get:
\begin{equation}\label{heckecorrespflaglevel} \begin{tikzcd}
	& \prod_{x \in A} \cF\ell_{\mathbf{a}_x} \\
	\prod_{x \in A} \cF\ell_{{\tt F}_x} && \prod_{x \in A} \cF\ell_{{v}_{0,x}} 
	\arrow["", from=1-2, to=2-1]
	\arrow[""', from=1-2, to=2-3]
\end{tikzcd}\end{equation}
By the glue morphisms $\tcg$ of \eqref{unif1} which involve the atlas points $A$, the diagram of flag varieties sits compatibly over the diagram \eqref{heckecorresp} which involve the marked points $\cR$.

\section{Picard functor and central charge} \label{Picard}

At a marking $x \in \cR$, we have chosen the alcove $\mathbf{a}_x$ in whose closure the facet ${\tt F}_x$ lies. We now recall some general results for facets which we will apply to ${\tt F}_x$ (resp. $\mathbf{a}_x$). It will be convenient to work without the subscript $x$. Further, for any point $x \in X$, we abbreviate $K_x$ as $K$. Then $(\mathcal{G}_{\tt F_x})_{K_x}$ is independent of $x$ and we denote it simply as $\mathcal{G}_K$. Let $\cF\ell_{\mathbf{a}}$ denote the Iwahori flag variety as in \eqref{flagv}. We recall the following facts which hold for the group schemes $\cG$ and $\Hfr$ which need not be split. Following the notations in \cite{me}, let ${\tt F}^\alpha$ denote the unique vertex of an alcove where $\alpha$ takes a value different from any other vertex.

\begin{prop} \cite[Proposition 10.1]{pradv} \label{PicF} Let $\cG$ be a parahoric group scheme that splits over a tamely ramified extension. There is an isomorphism
$ \Pic(\cF\ell_{\mathbf{a}}) \simeq \ZZ^{\mathbb{S}}$
 defined by the degrees of the restrictions to $\PP^1_\alpha= L^+\cG_{{\tt F}^\alpha}/L^+\cF\ell_{\mathbf{a}} \hra \cF\ell_{\tt I}$. 
\end{prop}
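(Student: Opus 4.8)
The plan is to reduce the computation of $\Pic(\cF\ell_{\mathbf{a}_x})$ to the combinatorics of the Bruhat stratification of the affine flag variety, exactly as in the classical theory of affine flag varieties for Kac-Moody groups, and then identify the resulting free module with $\ZZ^{\mathbb S}$ via restriction to the distinguished $\PP^1$'s. First I would recall that $\cF\ell_{\mathbf{a}_x} = L\cG_{{\tt F}_x}/L^+\cG_{{\tt F}_x}$ is an ind-projective $k$-scheme admitting a stratification by Iwahori orbits (Bruhat cells) indexed by the quotient $W_{\mathrm{aff}}/W_{{\tt F}_x}$ of the (extended) affine Weyl group, with the orbit of length $\ell(w)$ isomorphic to an affine space $\AA^{\ell(w)}$. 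Since the zero- and one-dimensional strata are finite in number and glue into the union of the $\PP^1_\alpha$ indexed by the simple affine roots $\alpha\in\mathbb S$ not in the parabolic type of ${\tt F}_x$ (here all of $\mathbb S$ for the Iwahori $\mathbf a$), the Picard group is generated by the classes of the codimension-one Schubert varieties $\ol{X}_{s_\alpha}$, $\alpha\in\mathbb S$, and these classes are free: a line bundle restricting trivially to every $\PP^1_\alpha$ is trivial on the $1$-skeleton, hence (by the standard argument that $\Pic$ of an ind-projective scheme stratified by affine spaces is detected on the $1$-skeleton, using $H^1(\AA^n,\cO^\times)=0$ and the exact sequences for successive strata) trivial.

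The core of the argument is therefore the two claims: (i) the restriction map $\Pic(\cF\ell_{\mathbf a_x})\to \prod_{\alpha\in\mathbb S}\Pic(\PP^1_\alpha)\cong\ZZ^{\mathbb S}$, $L\mapsto(\deg L|_{\PP^1_\alpha})_\alpha$, is injective; and (ii) it is surjective. For (i) I would run the colimit/Mayer-Vietoris argument over the finite-dimensional Schubert subvarieties $X_w=\ol{L^+\cG_{\mathbf a_x}\cdot w}$: each $X_w$ is a projective variety with a cell decomposition by affine spaces, so $\Pic(X_w)$ is free on the codimension-one Schubert divisors, and a class dies iff all its intersection numbers with the $1$-dimensional Schubert curves vanish; passing to the limit over $w$ (using that $\Pic(\cF\ell_{\mathbf a_x})=\varprojlim \Pic(X_w)$ since $\cF\ell$ is the increasing union of the $X_w$) gives injectivity. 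For (ii) I would exhibit, for each $\alpha\in\mathbb S$, a line bundle — e.g.\ the one associated to the fundamental (affine) weight $\omega_\alpha$ via the standard construction on the affine flag ind-scheme, pulled back along $\cF\ell_{\mathbf a_x}\to \cF\ell_{{\tt F}^\alpha}$ when $\alpha\in\mathbb S$, or more simply the ample generator coming from the embedding of each minimal parabolic quotient $\PP^1_\alpha$ — whose restriction to $\PP^1_\beta$ has degree $\delta_{\alpha\beta}$; the Chevalley-type formula for the pairing of a Schubert curve with a Schubert divisor does exactly this. Together with (i) this proves the isomorphism, and identifies the basis with the degrees on the $\PP^1_\alpha$ as asserted.

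The main obstacle I anticipate is bookkeeping rather than conceptual: the group $\cG_{{\tt F}_x}$ is a Bruhat-Tits parahoric that is only generically simply connected and splits merely over a tame extension, so the affine Weyl group is the \emph{extended} affine Weyl group (with its length-zero subgroup $\Omega$ of diagram automorphisms), and $\cF\ell_{\mathbf a_x}$ may be disconnected; one must check that $\Pic$ is still freely generated by the $\PP^1_\alpha$-classes on each component and that no relations are introduced by the $\Omega$-action. Since the reference \cite{pradv}, Prop.\ 10.1 establishes precisely this in the generality of twisted affine flag varieties over $k=k^{\mathrm{alg.cl.}}$, I would simply invoke that computation; the content I would need to verify by hand is only that the $\PP^1_\alpha$ appearing here are the minimal-parabolic curves in \emph{loc.\ cit.}, i.e.\ that $\PP^1_\alpha = L^+\cG_{{\tt F}^\alpha}/L^+\cG_{\mathbf a_x}$ with ${\tt F}^\alpha$ the codimension-one facet in $\ol{\mathbf a_x}$ cut out by $\alpha$, which is immediate from \eqref{theflagfibres} and \eqref{fromiwahori}.
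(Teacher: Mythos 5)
The paper gives no proof of this proposition at all: it is quoted verbatim from \cite[Prop.\ 10.1]{pradv}, so there is no internal argument to compare yours against, and your sketch is best read as a reconstruction of the proof in the cited reference. As such it is essentially correct and follows the same route Pappas--Rapoport take: identify $\cF\ell_{\mathbf a_x}$ with the affine flag variety of a Kac--Moody group, stratify by Iwahori orbits, present $\cF\ell_{\mathbf a_x}$ as the increasing union of finite-dimensional Schubert varieties $X_w$ so that $\Pic(\cF\ell_{\mathbf a_x})=\varprojlim\Pic(X_w)$, detect line bundles on the one-skeleton $\bigcup_\alpha\PP^1_\alpha$ (injectivity), and produce generators via the Chevalley formula (surjectivity). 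Two small corrections. First, for surjectivity your suggestion of pulling back along $\cF\ell_{\mathbf a_x}\to\cF\ell_{{\tt F}^\alpha}$ is backwards: $\PP^1_\alpha=L^+\cG_{{\tt F}^\alpha}/L^+\cG_{\mathbf a_x}$ is precisely the \emph{fibre} of that map (cf.\ \eqref{theflagfibres}), so any such pullback restricts to degree $0$ on $\PP^1_\alpha$ and cannot realize $\delta_{\alpha\beta}$; the correct generators are the line bundles $\cL(\epsilon_\alpha)$ attached to the affine fundamental weights of the \emph{central extension} (they do not come from characters of $L^+\cG_{\mathbf a_x}$ itself), for which $\deg\cL(\epsilon_\alpha)|_{\PP^1_\beta}=\langle\epsilon_\alpha,\beta^\vee\rangle=\delta_{\alpha\beta}$ --- your parenthetical appeal to the Chevalley formula is the argument that actually works. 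Second, the worry about the extended affine Weyl group and disconnectedness is moot under the standing hypotheses: $\cG_{k(X)}$ is assumed absolutely simple and simply connected, so the Kottwitz group is trivial, $\Omega$ plays no role, and $\cF\ell_{\mathbf a_x}$ is connected and reduced (this is exactly \cite[Thm 0.1, 0.2]{pradv}, which the paper invokes elsewhere). With those two points fixed, your outline is a faithful account of the cited proof.
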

Let $L_{\epsilon_0}$ be the line bundle on $\cF\ell_{\mathbf{a}}$ corresponding to the affine simple root $\alpha_0$ in $\mathbb{S}$. The Mumford-Igusa construction for 
$L_{\epsilon_0}$ gives the basic central extension 
\begin{equation} \label{ucentralextension}
1 \ra \mathbb{G}_m \ra \tilde{L} \mathcal{G}_K \ra L \mathcal{G}_K \ra 1.
\end{equation} 
In loc. cit. $\tilde{L} \mathcal{G}_K$ is called the {\it loop group in Kac-Moody context} and the $\mathbb{G}_m$ above is {\it the central $\GG_m$}.
We denote  the inverse image of the jet subgroup 
$  L^+ \mathcal{G} \hookrightarrow L \mathcal{G}_K$ of the Iwahori group scheme $\cG_{\mathbf{a}}$ by 
\begin{equation} \label{L+} \tilde{L}^+ \mathcal{G}. \end{equation} 
\begin{prop} \label{lb=char} (from the proof of \cite[Proposition 10.1]{pradv}) All line bundles on $\cF\ell_{\mathbf{a}}$ come from characters on $\tilde{L}^+ \mathcal{G}$.
\end{prop}
For a line bundle $L$ on $\cF\ell_{\mathbf{a}}$, let us denote the corresponding character as
\begin{equation} \label{chiL}
\chi_L : \tilde{L}^+ \mathcal{G} \ra \mathbb{G}_m.
\end{equation} 

Let us state the following theorem of Faltings which will be key to this work.

\begin{thm} (see \cite[Theorem 4.2.1]{BHf3} or \cite[Theorem 17, Remark on page 67]{faltings}) \label{faltingsO(1)} Let $X \ra S$ be a curve over an arbitrary base $S$. Let $\Hfr$ be an absolutely simple, semisimple, simply connected  group scheme on $X$. Choose a closed point $x \in X$ and a uniformizer $t \in {\cO}_{x}$. Let the restriction of $\Hfr$ to $\Spec(\cO_x)$ determine a parahoric group scheme corresponding to a point $v$ in the building of $\Hfr_{K_x}$.  Then $\glue^*_{t}: \Pic \left(\cM \left(\Hfr \right) \right) \rightarrow \Pic \left(\cF\ell_{v,x} \right)$ induced by $\glue_{t}: \cF\ell_{v,x} \ra \cM(\Hfr)$, gives an isomorphism of functors inducing  an isomorphism of Picard groups.
\end{thm}
The case $v=v_0$ is the most important. But we allow for $v$ to be a general hyperspecial vertex of the buidling of $\Hfr_{K_x}$ and not necessarily the origin corresponding to the standard parahoric group scheme.

\subsection{The notion of central charge \cite[Remark 10.2 (i)]{pradv}} \label{ccpradv} Let us denote by $L_{\epsilon_\alpha}$ the line bundle on the flag scheme $\cF\ell_{\mathbf{a}}$ which is associated to the affine simple root $\alpha \in \mathbb{S}$. For a facet ${\tt F}$, there is a  {\em central charge homomorphism} 
\begin{equation} \label{ccaffineflagvariety} c: \Pic \left( \cF \ell_{\tt F} \right) \ra \mathbb{Z}
\end{equation} 
which can be defined by pulling back line bundles to $\cF \ell_{\mathbf{a}}$, then by Proposition \ref{lb=char}, it is the degree of the character $\chi_L$ \eqref{chiL} induced on the central $\GG_m \hra \tilde{L}^+ \mathcal{G}$ \eqref{L+}. Let $\tt{A}$ denote the generalised Cartan matrix of affine type of $\mathcal{G}_K$. By \cite[\S 6.1]{kac}, the 
Dynkin diagram $S(^{t}\tt{A})$ of the dual algebra is obtained from $S(\tt{A})$ by reversing the direction of all arrows but keeping the same enumeration of vertices. Let  $$a_{\alpha}^{\vee}$$ denote the labels of the Dynkin diagram of $S(^{t}\tt{A})$. Then $a_{\alpha_{_0}}^{\vee}=1$ in all cases including $A^{(2)}_{2l}$ for which $a_{\alpha_{_0}}=2$. Then for $\alpha \in \mathbb{S}$ we have 
\begin{equation} \label{centralcharge}
c \left(L_{\epsilon_\alpha} \right)= a_{\alpha}^{\vee}.
\end{equation}

\subsection{Central charge of line bundles on moduli stack of $\cG$-torsors} \label{centralchargestack}

In \cite{zhu} a global affine grassmannian $Gr_{\cG} \rightarrow X$ has been constructed. Let $Gr_{\cG,z}$ denote its fibre over $z \in X$. It is naturally isomorphic to  $\cF\ell_{{\tt F}_z}$ for some facet ${\tt F}_z \subset \cA_T$. By \cite[\S  7, 1st paragraph]{heinloth} the central charge of a line bundle on $\cM(\cG)$ 
\begin{equation} \label{ccmodulistack} {\tcm}_z: \text{Pic}(\cM(\cG)) \ra \mathbb{Z}
\end{equation} 
is defined at an arbitrary point $z \in X$ after pulling it back to $Gr_{\cG,z}$ and then applying the central charge homomorphism ${\tcm}_z: \text{Pic}(Gr_{\cG,z}) \to \mathbb Z$ \eqref{ccaffineflagvariety}.  Denoting by $\mathcal{G}_x$ the reduction of $\mathcal{G}$ to the closed fiber at $x \in X$ and by $\XX^*(\cG_x)$ the group of characters on $\cG_x$, by \cite[Theorem 7]{heinloth} we have the exact sequence 
\begin{equation} \label{picseq}
0 \ra \prod_{x \in \cR} \XX^*(\cG_x) \ra \Pic \left(\cM(\cG) \right) \stackrel{{\tcm}_z}{\lra} \ZZ,
\end{equation}
which may not be surjective. 

\begin{rem}\label{faltingsandcc1} ({\sf On the splitness assumption on $\Hfr$})
 Faltings constructs a line bundle 
$L(\cE)_{\Hfr}$ of central charge one on $\cM(\Hfr)$ when $\Hfr$ is a {\em split group scheme}. He remarks \cite[Remark on page 67]{faltings}, that this result generalises when $\Hfr$ is an inner form (or even an outer form) as well. A line by line verification shows that this is indeed the case. In fact, it is sufficient for Faltings to get a generic Borel reduction for the torsors and for this no assumption of splitness is required as remarked in \cite[Page 84 (d)]{ds1995}. This is good enough to get the line bundle of central charge $1$ on the moduli stack of torsors. In \S \ref{elaboration}, we give a self-contained argument deducing the result for the {\em tamely-split case} from the split case.  
\end{rem}

\begin{rem}\label{splitsurjectiveconseq} Let $\cE^{\tt I}$ be the universal Iwahori torsor on $X \times \cM^{\tt I}$.  For $z \in \cR$, a character $\chi \in \XX^*(\cG^{{\tt I}}_z)$ canonically gives rise to a line bundle $\cE^{\tt I}(\chi)$ on $\cM^{\tt I}$. By the exact sequence, this line bundle has central charge zero and comes from a line bundle on $G_z/B_z = G/B$, i.e. a character of $B$. The converse is also easily seen to hold.
\end{rem}
Using the definition given above, Zhu  shows that the central charge morphism is independent of $z \in X$ (see \cite[Proposition 4.1, and page 8]{zhu}). We give a proof below. The proof in { {\em loc.cit}} runs to a few pages. 

\begin{Cor} \label{ccconstantcy} The central charge \eqref{ccmodulistack} is independent of the base point $z$.
\end{Cor} 
\begin{proof} We first consider the case $\cG$ is the semisimple group scheme $\Hfr$ of Theorem \ref{faltingsO(1)} (which in particular could be tamely split). In this case, using the universal torsor $\cE \ra X \times \cM_{_X}(\Hfr) $ Faltings, in Theorem \ref{faltingsO(1)}, constructs a line bundle $$L(\cE)_{\Hfr}$$ which under $\glue^*_{t}: \Pic \left(\cM(\Hfr) \right) \rightarrow \Pic \left(\cF\ell_{v,x} \right)$ maps to the generator of $\Pic \left(\cF\ell_{v,x} \right)$ for any $x \in X$. In particular, its central charge is the same defined at any point $x \in X$. This proves the result for $\cG=\Hfr$.

Recall by our notation $\mathbf{a}_z$ denotes an alcove in whose closure ${\tt F}_z$ lies. For computing central charge \eqref{ccaffineflagvariety}, it suffices to further pull-back the line bundles from $\cF\ell_{{\tt F}_z}$ to $\cF\ell_{\mathbf{a}_z}$. Equivalently, since \eqref{fibproddiag} is a cartesian square, we may pull-back the line bundle by $\pi_{_{\tcs}}: \cM(\cG^{\tt I}) \ra \cM_{_X}(\cG)$ \eqref{heckecorresp} and use ${\tcm}_z: \text{Pic}(\cM(\cG^{\tt I})) \ra \mathbb{Z}$ \eqref{ccmodulistack}. {\em This reduces the general parahoric case $\cG$ to $\cG^{\tt I}$}. 

We now consider the case $\cG=\cG^{\tt I}$. In this case, the sequence \eqref{picseq} is split by $h_{_{\tcs}}^*(L(\cE)_{\Hfr})$ (see \eqref{Heckemodification}) for any $z \in X$. Thus, $\tcm_z$ is surjective for all $z \in X$ and we have 
\begin{equation} \label{splitsurjective}
\Pic \left(\cM^{\tt I} \right) = \Pic \left(\cM(\Hfr) \right) \oplus \prod_{x \in \cR} \XX^* \left(\cG^{{\tt I}}_x \right).
\end{equation}
Thus, an arbitrary line bundle $L$ on $\cM_{_X}\left(\cG^{\tt I} \right)$ may be written as a tensor product of a  power, say $p$, of $h_{_{\tcs}}^*(L(\cE)_{\Hfr})$ and $\prod_{x \in \cR} \XX^*(\cG^{{\tt I}}_x)$. For any $z \in X$, the latter have central charge zero with respect to ${\tcm}_z$ \eqref{picseq}. Thus, for any $z \in X$, 
$${\tcm}_z(L)=p,$$
and $p$ is independent of $z \in X$.
\end{proof}

\subsection{Line bundles on $\cM(\cG)$}
We now take the case $A =\cR$, i.e. the markings for the atlas coincide with the ramifications. 
 The following proposition shows that {\em all line bundles on $\prod_{x \in \cR} \cF\ell_{{\tt F}_x}$ whose central charge on each factor is a fixed multiple of $\ell$ do arise as pullbacks of line bundles from $\cM(\cG)$}. The diagram 
\eqref{fibproddiag} for $A=\cR$ induces the following Picard diagram:  
\begin{equation} \label{cartesiansquare}
\xymatrix{
\Pic \left(\cM^{\tt I} \right) \ar@{^{(}->}[rr]^{\tcg^{\tt{I}*}} && \Pic \left(\prod_{x \in \cR} \cF\ell_{\mathbf{a}_x} \right)  \\
\Pic \left(\cM_{_X}(\cG) \right) \ar@{^{(}->}[rr]^{\tcg^*} \ar@{^{(}->}[u]^{\pi_{\tcs}^*} && \Pic \left(\prod_{x \in \cR} \cF\ell_{{\tt F}_x} \right)  \ar@{^{(}->}[u]^{p_{\cR}^*}
}
\end{equation}
Let us show that the Picard group of $\cM(\cG)$ is the intersection of $\Pic \left(\cM^{\tt I} \right)$ and $\Pic \left(\prod_{x \in \cR} \cF\ell_{{\tt F}_x} \right)$ inside $\Pic \left(\prod_{x \in \cR} \cF\ell_{\mathbf{a}_x} \right)$.

\begin{prop}\label{redtoIwa} (A ``descent" criterion)   The diagram \eqref{cartesiansquare} is a pull-back square. The image of $\tcg^*$ {\em identifies with the subgroup of $\Pic \left(\prod_{x \in \cR} \cF\ell_{{\tt F}_x} \right)$ consisting of line bundles having the same central charge on each factor which is a multiple of $\ell$ (see \eqref{f})}. In particular, when $\cR$ is a single point $x$, pullback defines the isomorphism 
\begin{equation} \tcg^*: \Pic \left(\cM_{_X}(\cG) \right) \ra  \Pic \left(\cF\ell_{{\tt F}_x} \right).
\end{equation}
\end{prop}
\begin{proof} (see \cite[Proposition 7.7.2]{me}  \cite[Proposition 7.10.1 and Theorem 7.11.1]{me} for the split case) Let $\cG$ with tamely split generic fibre. Let $\cR$ be an arbitrary finite set of closed points in $X$. Since the central charge is constant for $x \in X$ by Corollary \ref{ccconstantcy} and on each factor $\cF\ell_{{\tt F}_x}$ it has to be a multiple of $l_x$ (see \eqref{f}), the above condition is necessary. We now check the sufficiency. 

We first check the sufficiency for  the Iwahori moduli stack $\cM_{_X}(\cG^{\tt I})$. The key fact again is the existence of the Faltings line bundle of central charge $1$. Recall that by \eqref{Heckemodification}, we have the morphism $h_{_{\tcs}}: \cM_{_X}(\mathcal{G}^{\tt I}) \rightarrow \cM_{_X}(\Hfr)$ where $\Hfr$ is a semisimple group scheme tamely split. In this case $\ell=1$. We saw in Corollary \ref{ccconstantcy} that in this case, the sequence \eqref{picseq} is split by $h_{_{\tcs}}^*(L(\cE)_{\Hfr})$ (see \eqref{Heckemodification}) for any $z \in X$. The pullback of $h_{_{\tcs}}^*(L(\cE)_{\Hfr})$ by $\tcg^{\tt{I}*}$  is the line bundle $L_{\alpha_{_0}}$ of Proposition \ref{PicF} in each factor $\cF\ell_{{\tt F}_x}$. Since $c(L_{\alpha_{_0}})=1$, we get central charge one in each factor.  On the other hand, line bundles of central charge zero on the atlas $\prod_{x \in \cR} \cF\ell_{{\mathbf{a}}_x}$ identify with $\prod_{x \in \cR} \XX^*(\cG_{\mathbf{a},x})$. These  identify with $\prod_{x \in \cR} \XX^*(\cG_x^{\tt I})$. Whence all line bundles of central charge zero on $\prod_{x \in \cR} \cF\ell_{{\mathbf{a}}_x}$ descend to $\cM^{\tt I}$. These constitute the other factor in the direct sum \eqref{splitsurjective} which corresponds to all line bundles of central charge zero. This proves the result for the $\cM_{_X}(\cG^{\tt I})$.  When $\cR$ is a single point $x$, we also get that $\tcg^{\tt{I}*}$ is an isomorphism. This is \cite[Proposition 7.7.1]{me} when $\cG$ is constant on $X - \cR$.

 For arbitrary $\cG$ we proceed as follows. Let us consider \eqref{fibproddiag} for $A = \cR$. Let $L_{\tt F}$ be a line bundle on $\prod_{x \in \cR} \cF\ell_{{\tt F}_x}$ having the same central charge on each factor. This condition holds trivially if $\cR$ is a single point. Let $L_{\mathbf{a}}:= p_{_{A}}^*(L_{\tt F})$ denote its pull-back under $$p_{_{A}}: \prod_{x \in \cR} \cF\ell_{\mathbf{a}_x} \ra \prod_{x \in \cR} \cF\ell_{{\tt F}_x}.$$
Then by the arguments above  $L_{\mathbf{a}}$ descends to $\cM^{\tt I}$, say as $L_{\tt I}$. 

To see the  ``descent along a group quotient" of $L_{\tt F}$, we go to \cite[proof of Proposition 1.5]{bls}. Let us consider the two quotient maps: $\tcg^{\tt I}: \prod_{x \in \cR} \cF\ell_{\mathbf{a}_x} \ra \cM_{_X}(\cG^{\tt I}) $ and $\tcg: \prod_{x \in \cR} \cF\ell_{{\tt F}_x} \ra \cM_{_X}(\cG)$. The quotienting is by the same ind-group scheme $$\text{Mor}(X - \cR, \cG^{\tt I})=\text{Mor}(X - \cR, \cG).$$ The fact that $L_{\mathbf{a}}$ descends to $\cM^{\tt I}$ is equivalent to $L_{\mathbf{a}}$ getting a  $\text{Mor}(X - \cR, \cG^{\tt I})$-linearization. By the very same token, since $L_{\mathbf{a}}:= p_{_{A}}^*(L_{\tt F})$,  it follows that $L_{\tt F}$ gets a $\text{Mor}(X - \cR, \cG)$-linearization (see self-contained argument below in \eqref{selfco}). Hence, by {{\em loc.cit}} it follows that $L_{\tt F}$ descends to a line bundle on $\cM_{_X}(\cG)$.

\begin{rem}\label{selfco}  For ease of notation, let $Z = \cF\ell_{\mathbf{a}_x}$ be the affine flag variety for the Iwahori and $Y = \cF\ell_{{\tt F}_x}$ the one for the parahoric stack. We have a morphism $p:Z \to Y$ which is a fibration with fibres products of $G/B$'s. We have a group $\Gamma$ acting on $Z$ and $Y$ compatibly: i.e. we have a commutative diagram:

\[\begin{tikzcd}
	{Z \times \Gamma } & {^{^a}_{_q}} &  Z \\
	{Y \times\Gamma} &{^{^a}_{_q}} & Y
	\arrow[from=1-1, to=1-3]
	\arrow[from=1-1, to=2-1]
	\arrow[from=1-3, to=2-3]
	\arrow[ from=2-1, to=2-3]
\end{tikzcd}\]
with the projection and action maps denoted by $a$ and $q$ respectively. We are given a line bundle $L$ on $Y$ and a ``descent datum", satisfying the usual cocycle condition, i.e. an isomorphism of line bundles on $Z \times \Gamma$: 
\begin{equation}\label{byvirtueofdesc} 
\theta_{_Z}: q^*(p^*(L)) \simeq a^*(p^*(L))
\end{equation}
This is available to us because the line bundle $p^*(L)$ descends to $Z/\Gamma$.

Since $\text{Pic}(Y \times \Gamma) \subset \text{Pic}(Z \times \Gamma)$, the isomorphism \eqref{byvirtueofdesc} gives an isomorphism 
\begin{equation}\label{newone}
\alpha_{_Y}:q^*(L) \simeq a^*(L)
\end{equation}
an isomorphism of line bundles on $Y \times \Gamma$. Note that this isomorphism   need not {\em a priori} be the same as the isomorphism $\theta_{_Z}$, and so one needs to check the cocycle condition for this $\alpha_{_Y}$.   In fact, {\em we claim} that the isomorphism $\theta_{_Z}$ over $Z \times \Gamma$ also descends to an isomorphism $\theta_{_Y}$ over $Y \times \Gamma$. Notice that over $Z \times \Gamma$ we have two isomorphisms $\theta_{_Z}$ and  $p^*(\alpha_{_Y})$. Further, $\theta_{_Z}^{-1} \circ p^*(\alpha_{_Y})$ gives an automorphism of $q^*(L)$ and hence a nowhere vanishing section of $\mathcal O$ on $Z \times \Gamma$. Since $H^0(Z \times \Gamma, \cO) = H^0(Y \times \Gamma,\cO)$ ($p$ being a $G/B$-fibration), we have
\begin{equation}
\theta_{_Z}^{-1} \circ p^*(\alpha_{_Y}) \simeq p^*(\beta_{_Y})
\end{equation}
for a $\beta_{_Y} \in H^0(Y \times \Gamma,\cO)$. Thus, $\theta_{_Z} = p^*(\alpha_{_Y} \circ \beta_{_Y}^{-1})$ and hence descends to $Y \times \Gamma$, say as $\theta_{_Y} := \alpha_{_Y} \circ \beta_{_Y}^{-1}$. Hence,  $\theta_{_Y} $ also  satisfies the cocycle condition and this implies that $L$ descends to a line bundle on $Y/\Gamma$ by \cite[proof of Proposition 1.5]{bls}.\end{rem}

\begin{rem} The perspective taken in \cite{me}, is to carry out ``descent" of the line bundle along the projective morphism $\pi_{_{\tcs}}: \cM(\cG^{\tt I}) \ra \cM_{_X}(\cG)$ \eqref{heckecorresp}.\end{rem}

\end{proof} 

\begin{Cor}\label{intergralmultip} The central charge of any line bundle on $\cM(\cG)$ is a multiple of $\ell$ (see \eqref{f}). In fact, the sequence  \eqref{picseq} becomes the exact sequence:
\begin{equation} \label{picseq1}
0 \ra \prod_{z \in \cR} \XX^*(\cG_z) \ra \Pic \left(\cM(\cG) \right) \stackrel{\tcm}{\ra} \ell.\ZZ \to 0.
\end{equation} 
\end{Cor}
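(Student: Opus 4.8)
The plan is to extract the corollary directly from Proposition \ref{redtoIwa} together with the exact sequence \eqref{picseq} coming from Heinloth's theorem \cite[Thm 7]{heinloth}, and to verify the two non-obvious points: that the image of the central charge map $\tcm$ is exactly $\ell\cdot\ZZ$, and that the sequence becomes short exact (i.e. $\tcm$ is surjective onto $\ell\cdot\ZZ$). First I would recall that the central charge $\tcm$ on $\Pic(\cM(\cG))$ can be computed after pulling back along $q_0\colon \prod_{x\in\cR}\cF\ell_{{\tt F}_x}\to\cM(\cG)$, and that by the last remark of \S5 (the additivity $c(L)=\sum_{x\in\cR}c(L_x)$ over the factors of a product of affine flag varieties) together with the independence of the base point (Theorem \ref{independence}), the pulled-back line bundle $q_0^*L$ has the \emph{same} central charge $c_0$ on each factor $\cF\ell_{{\tt F}_x}$, and $\tcm(L)=|\cR|\cdot c_0$ matches the single-point definition $\tcm_z(L)=c_0$. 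This already shows $\tcm$ is well-defined as stated in \eqref{picseq}.

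Next, for the inclusion $\tcm(\Pic(\cM(\cG)))\subseteq\ell\cdot\ZZ$: by Proposition \ref{redtoIwa} the image of $q_0^*$ consists precisely of the line bundles on $\prod_{x\in\cR}\cF\ell_{{\tt F}_x}$ whose central charge on each factor is a common multiple of $\ell$. Since $q_0^*$ is injective (the vertical and horizontal maps in \eqref{cartesiansquare} are all injective, as $\glue$ has étale local sections, cf.\ \eqref{glue}), every line bundle $L$ on $\cM(\cG)$ has $c_0\in\ell\cdot\ZZ$, hence $\tcm(L)=c_0\in\ell\cdot\ZZ$; this is the first assertion. For surjectivity onto $\ell\cdot\ZZ$, I would run the converse direction of Proposition \ref{redtoIwa}: a line bundle on $\prod_{x\in\cR}\cF\ell_{{\tt F}_x}$ of central charge exactly $\ell$ on each factor lies in the image of $q_0^*$, hence descends to a line bundle $L_\ell$ on $\cM(\cG)$ with $\tcm(L_\ell)=\ell$; its powers realise all of $\ell\cdot\ZZ$. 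Here one must explain why $\ell=\operatorname{lcm}\{l_x\}$ is exactly the value achievable: on the factor $\cF\ell_{{\tt F}_x}$, using $\Pic(\cF\ell_{{\tt F}_x})\simeq\ZZ^{S(x)}$ and the central-charge formula $c(L_{\epsilon_\alpha})=a_{\alpha^\vee}$ from the Iwahori computation (specialised to $\alpha\in S(x)$), the set of central charges of line bundles on $\cF\ell_{{\tt F}_x}$ is $\gcd\{a_{\alpha^\vee}\mid\alpha\in S(x)\}\cdot\ZZ = l_x\cdot\ZZ$; a line bundle on the product has matching central charge $c_0$ on each factor only if $c_0$ is a common multiple of all the $l_x$, so the minimal positive value is $\operatorname{lcm}\{l_x\mid x\in\cR\}=\ell$.

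Finally I would assemble the short exact sequence \eqref{picseq1}: the kernel of $\tcm$ is, by \cite[Thm 7]{heinloth} / \eqref{picseq}, exactly $\prod_{z\in\cR}\XX^*(\cG_z)$ (line bundles of central charge zero are those coming from characters of the special fibres $\cG_z$, cf.\ Remark \ref{splitsurjectiveconseq}), and we have just shown the image is $\ell\cdot\ZZ$; exactness in the middle and on the right is then immediate.

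The main obstacle I expect is the surjectivity half — i.e.\ genuinely producing a line bundle on $\cM(\cG)$ of central charge exactly $\ell$ and confirming that no smaller positive central charge occurs. This rests on two things that must be handled with care: first, that Proposition \ref{redtoIwa}'s description of $\operatorname{im}(q_0^*)$ is an \emph{equality} (so that the "common multiple of $\ell$" condition is not just necessary but sufficient for descent), which is where the detailed flag-variety Picard computations of \cite{me} (quoted in the proof of \ref{redtoIwa}) and the identification $\Pic(\cF\ell_{{\tt F}_x})\simeq\ZZ^{S(x)}\hookrightarrow\ZZ^{S}=\Pic(\cF\ell_{\mathbf a_x})$ are essential; and second, the number-theoretic point that $\operatorname{lcm}\{l_x\}$ is the exact obstruction, which requires knowing the image of the central charge map on each individual $\cF\ell_{{\tt F}_x}$ is $l_x\ZZ$ — a consequence of $\Pic(\cF\ell_{{\tt F}_x})$ being freely generated by the $L_{\epsilon_\alpha}$, $\alpha\in S(x)$, with central charges $a_{\alpha^\vee}$. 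Everything else is a formal consequence of the already-established injectivity of $q_0^*$, the additivity of central charge over products, and Heinloth's exact sequence.
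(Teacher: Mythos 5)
Your proposal is correct and follows essentially the same route as the paper, which likewise deduces the corollary from the uniformization theorem, Theorem \ref{independence}, and Proposition \ref{redtoIwa} (the latter supplying both the containment of the image of $\tcm$ in $\ell\cdot\ZZ$ and the surjectivity); your elaboration that the central charge image on each factor $\cF\ell_{{\tt F}_x}$ is $l_x\cdot\ZZ$, so that equality across factors forces the lcm $\ell$, is exactly the content the paper's one-line proof implicitly invokes. One cosmetic slip: the phrase ``$\tcm(L)=|\cR|\cdot c_0$'' conflates the total central charge of the box product on $\prod_{x\in\cR}\cF\ell_{{\tt F}_x}$ with the central charge of $L$ on the stack, which by definition is the common single-factor value $c_0$; since the rest of your argument uses $c_0$ correctly, nothing breaks.
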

\begin{proof} This follows immediately by the uniformization theorem and Corollary \ref{ccconstantcy}. The surjectivity of $\tcm$ follows from \eqref{redtoIwa}.
\end{proof}

\subsection{Interpreting pullback of line bundles under $\pi_{_{\tcs}}: \cM^{\tt I} \ra \cM(\cG)$ in terms of $h_{_{\tcs}}: \cM^{\tt I} \ra \cM(\Hfr)$ (see \eqref{heckecorresp})} \label{hdescriptionpullback} Let $L$ be  a  line bundle  on $\cM(\cG)$. We want to describe $\pi_{_{\tcs}}^*(L)$  in terms of $L(\cE)_{\Hfr}$ of \eqref{faltingsandcc1}. Firstly, its central charge  is an integral multiple of $\ell$, i.e. \begin{equation} \label{isoln1} 
\tcm(\pi_{_{\tcs}}^*(L)) = n \cdot \ell,\end{equation}  (see \eqref{f}), where the integer $n$ is dependent on the morphism $\pi_{_{\tcs}}$.
The pull-back $q^*(L)$ (see \eqref{cartesiansquare}) is a box product of some line bundles $\xi_x \rightarrow \cF\ell_{{\tt F}_x}$. Thus, the central charge of all $\xi_x$ is $n \ell$. Recall for any $x \in X$, $S({\tt F}_x) \subset \mathbb{S}$ \eqref{sx} is the subset of those affine simple roots that do not vanish at the facet ${\tt F}_x$ corresponding to the parahoric group scheme obtained by restricting $\cG$ to $\mathbb{D}_x$. Each $\xi_x$ determines an integral solution 
\begin{equation} \label{isoln} {\bf e}(x) = (\cdots, n^{x}_\alpha, \cdots) \in \ZZ^{S({\tt F}_x)}, \alpha \in S({\tt F}_x)
\end{equation} to the equation 
\begin{equation}\label{isoln2} \sum_{\alpha \in S({\tt F}_x)} n^{x}_\alpha a_{\alpha^\vee} = n \cdot \ell. \end{equation} 

Conversely, since the diagram \eqref{cartesiansquare} is pull-back square, $L$ is determined by the solution tuples $\{{\bf e}(x) | x \in \cR \}$. As in \cite{me}, for $\alpha_0$, by  $\omega_{\alpha_0}$ we mean the trivial weight. Consider the line bundle $$L_{_{{\bf e}(x)}}$$ on $G_x/B_x$ corresponding to the $B_x$-character 
\begin{equation} \label{Bxchar} {\bf e}(x)=\sum_{\alpha \in S({\tt F}_x)} n^{x}_\alpha \omega_\alpha.
\end{equation} Since \eqref{isoln} and \eqref{Bxchar} determine each other uniquely, we use the same symbol ${\bf e}(x)$ for both of them. Further, we use the same notation $L_{_{{\bf e}(x)}}$ for the corresponding line bundle on $\cM_{_X}(\cG^{\tt I})$. Then we have the following isomorphism of line bundles  on $\cM^{\tt I}$ 
\begin{equation} \label{piLh}  \pi_{_{\tcs}}^*(L)= h_{_{\tcs}}^* (L(\cE)_{\Hfr})^{n\ell} \otimes L_{_{{\bf e}(x)}}^{\otimes_{x \in \cR}}
\end{equation} for $x \in \cR$. 
 For later use we recall the following obvious fact.

 \begin{rem}\label{bwbprep} Let $V$ be a vector bundle on $\cM(\cG)$. Set $\cV:= \pi^*_{_{\tcs}}(V)$. We have an  isomorphism of vector spaces
\begin{equation} \label{isocohgps}
H^i \left(\cM(\cG),V \right)=H^i \left(\cM^{\tt I},\cV \right), \quad \text{for} \quad i \geq 0.
\end{equation}
\end{rem}
This is immediate by the Leray spectral sequence for $\pi_{_{\tcs}}$ whose fibers by \eqref{heckecorresp} are products of flag schemes \eqref{fromiwahori}.

\begin{Cor} \label{SplitGamma=Verlinde} Let $k=\mathbb{C}$. For a line bundle $L$ on $\cM_X(\cG)$, global sections are given by the Verlinde formula.
\end{Cor} 
\begin{proof} Note that $G$ is split in this case. By \eqref{isocohgps} and \eqref{piLh}, we need to compute global sections of  $h_{_{\tcs}}^* (L(\cE)_{\Hfr})^{n\ell} \otimes L_{_{{\bf e}(x)}}^{\otimes_{x \in \cR}}$. For classical groups $G$ and for the group $G_2$ these are given by the Verlinde formula by \cite[Theorem 1.2.1]{ls}. Further, this proof extends to all $G$ by  the construction of the line bundle of central charge $1$ on $\cM_X(G)$ by \cite{sorger}.
\end{proof}

\section{Conformal blocks in parahoric set-up} \label{ConformalBlocksection}
{\em The field $k$ is assumed to be of characteristic $0$}.

\subsection{Conjecture 3.7 of \cite{pr}}\label{conformalstory}  
Let $\cS$ be a finite subset of points of $X$ containing $\cR$. In Proposition \ref{redtoIwa} we have described the image of $\tcg^*: \Pic(\cM_{_X}(\cG)) \ra  \Pic(\prod_{x \in \cS} \cF\ell_{{\tt F}_x})$. Let $L$ be a line bundle on $\cM(\cG)$ and say $L$ pulls back to $\prod_{x \in \cS} \cF\ell_{{\tt F}_x}$ as $\boxtimes_{x \in \cS} L_{x}$. The conjecture claims that there exists a set $\cS$ such that we have the following equality of sections:
\begin{equation*} \text{H}^0 ( \cM(\cG), L ) =
 \text{H}^0 (\prod_{x \in \cS} \cF\ell_{{\tt F}_x}, \boxtimes_{x \in \cS} L_{x})^{\text{Lie}(\cG)(X - \cS)} = \big[\bigotimes_{x \in \cS} \text{H}^0(\cF\ell_{{\tt F}_x},L_x)\big]^{\text{Lie}(\cG)(X - \cS)}.
\end{equation*}

Since $k$ is algebraically closed of characteristic  zero, we may take recourse to the equivariant approach \cite{bs}.  Let $ \cG$ be as before a parahoric Bruhat--Tits group scheme on $X$, such that the generic fibre $\cG_{k(X)}$ is an absolutely simple, semisimple, simply connected group scheme, which splits over a Galois extension. By a classical theorem of Steinberg, $\cG_{k(X)}$ is quasi-split. Recall that in Balaji-Seshadri \cite[Theorem 5.2.7, Remark 5.2.8]{bs}, it is shown that any parahoric Bruhat--Tits group scheme $\cG$  is realized as an {\em invariant direct image} from a suitable Galois cover $\psi:Y \to X$ with Galois group $\Gamma$, i.e., there exists a {\em semisimple group scheme} $\cG'$, with split generic fibre $\cG'_{k(Y)}$ isomorphic to $G \otimes_{_{k(X)}} k(Y)$ and 
\begin{equation}\label{papparappakabaap}
\cG \simeq \psi_{_*}^{^{\Gamma}}(\cG') = \text{Res}_{_{Y/X}}^{^{\Gamma}}(\cG').
\end{equation} 

\begin{rem} \label{remarkbs} In \cite[Remark 5.2.8]{bs} it is observed that more general group schemes $\cG \ra X$ of Pappas-Rapoport \cite{pr}, (for instance, allowing local gluing functions to lie in $\Aut(G)(K_x)$), and $\cG$ to be tamely-split, can also be recovered as the invariant-direct image of $\Gamma$--group schemes which need not arise from $(\Gamma,G)$-torsors.

\end{rem}
\noindent 
(See also \cite[Theorem 1]{pr2024} for this result under tameness assumptions. See also \cite{higherBT} for higher dimensional analogues of these questions in any characteristic). After this article was written, we became aware of \cite{dh} which also gives a proof of this ``conjecture". The proof has obvious similarities in terms of techniques, but as explained later in \S\ref{clarifications}, the setting here is more general. For other comparisons between this article and {\em loc.cit}, see \S\ref{clarifications} and \eqref{clarifications1}.

\begin{thm} \label{papparappa1}  The \cite[Conjecture 3.7]{pr}  holds for $\cM(\cG)$ whenever there exists a set $\cS \supset \cR$ such that $\cG(X - \cS)$ is connected. This holds when $k=\CC$. \end{thm}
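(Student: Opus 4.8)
The plan is to reduce the conjectural identity to the now-classical uniformization picture for constant group schemes, using the realization \eqref{papparappakabaap} of $\cG$ as an invariant direct image $\psi_*^\Gamma(\cG')$ from a Galois cover $\psi:Y\to X$ with group $\Gamma$, together with the hypothesis that $\cG(X-\cS)$ is a connected ind-group. The key point is that the uniformization isomorphism \eqref{unif1} with $\cR_1:=\cS$ presents $\cM(\cG)$ as the quotient stack $\cG(X-\cS)\backslash\prod_{x\in\cS}\cF\ell_{{\tt F}_x}$, and the morphism $\glue_{\{t_x\}}$ has \'etale local sections by \cite[Theorem 4]{heinloth}. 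First I would pull back the given line bundle $L$ along $\glue_{\{t_x\}}$; by the box-decomposition of line bundles on a product of affine flag varieties it becomes $\boxtimes_{x\in\cS}L_x$, and taking global sections along a morphism with local sections gives
\begin{equation*}
\mathrm{H}^0\big(\cM(\cG),L\big)=\mathrm{H}^0\Big(\prod_{x\in\cS}\cF\ell_{{\tt F}_x},\boxtimes_{x\in\cS}L_x\Big)^{\cG(X-\cS)},
\end{equation*}
the invariants being taken for the ind-group $\cG(X-\cS)$ acting by changing the section.

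The second step is to replace the $\cG(X-\cS)$-invariants by $\mathrm{Lie}(\cG)(X-\cS)$-invariants. This is exactly the place where connectedness of $\cG(X-\cS)$ enters: for a connected ind-group acting on an ind-projective scheme in characteristic zero, a section of a line bundle is group-invariant if and only if it is annihilated by the Lie algebra (one integrates the infinitesimal action, using that the orbits are connected and the representation on $\mathrm{H}^0$ is locally finite, so that the Lie algebra action exponentiates on each finite-dimensional invariant subspace). Since $\cG(X-\cS)=\psi_*^\Gamma(\cG')(X-\cS)$, its Lie algebra is $\mathrm{Lie}(\cG)(X-\cS)$, the sections over $X-\cS$ of the Lie algebra bundle of $\cG$; this matches the statement of \cite[Conjecture 3.7]{pr}. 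The final equality
\begin{equation*}
\mathrm{H}^0\Big(\prod_{x\in\cS}\cF\ell_{{\tt F}_x},\boxtimes_{x\in\cS}L_x\Big)=\bigotimes_{x\in\cS}\mathrm{H}^0(\cF\ell_{{\tt F}_x},L_x)
\end{equation*}
is the K\"unneth formula, which applies because each $\cF\ell_{{\tt F}_x}$ is ind-projective and $\mathrm{H}^0$ of an ample (or arbitrary) line bundle on it is the appropriate inverse/direct limit of finite-dimensional spaces; over a field this causes no difficulty.

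I expect the main obstacle to be the passage from group invariants to Lie-algebra invariants for the \emph{ind}-group $\cG(X-\cS)$, i.e. making precise that connectedness suffices. One has to be careful that $\cG(X-\cS)$ is an increasing union of finite-type (not necessarily reduced or connected-on-the-nose in a naive sense) group schemes, so the argument should be run at the level of the pro-algebraic/ind-algebraic group: filter $\mathrm{H}^0(\prod_x\cF\ell_{{\tt F}_x},\boxtimes_x L_x)$ by finite-dimensional $\cG(X-\cS)$-submodules, observe that on each such submodule the action factors through a finite-type quotient which is connected by hypothesis, and invoke the characteristic-zero fact that for a connected algebraic group a vector is fixed iff it is Lie-algebra-fixed. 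The remaining steps—box-decomposition of line bundles, K\"unneth, and descent of sections along $\glue$—are standard once the setup of Sections \S3--\S5 is in place, and for $k=\CC$ the connectedness hypothesis on $\cG(X-\cS)$ can be arranged by choosing $\cS$ large enough, which is why the theorem is stated unconditionally there.
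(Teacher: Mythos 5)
Your reduction of the conditional statement is essentially the paper's own argument: uniformize $\cM(\cG)$ by $\prod_{x\in\cS}\cF\ell_{{\tt F}_x}$ using Heinloth's theorem that $\glue$ has \'etale local sections, identify $\mathrm{H}^0(\cM(\cG),L)$ with the $\cG(X-\cS)$-invariants via the descent lemma \cite[Lem 7.2]{bl}, and pass from group invariants to $\mathrm{Lie}(\cG)(X-\cS)$-invariants using integrality of the ind-group in characteristic zero, as in \cite[Prop 7.4]{bl} and \cite[Prop 5.1]{ls}. The paper routes this through the Iwahori stack $\cM^{\tt I}$ first and then descends along the $G/B$-fibrations using \eqref{isocohgps}, but it explicitly remarks that this detour is inessential, so your direct treatment is acceptable. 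One point you flag but do not fully close is the \emph{reducedness} of $\cG(X-\cS)$: the paper establishes it following Laszlo--Sorger (\'etale locally $q$ is a product of a neighbourhood of $\cM(\cG)$ with $\cG(X-\cS)$), and it is genuinely needed before Lie-algebra invariance can be compared with group invariance; your workaround via finite-type quotients (automatically reduced in characteristic $0$ by Cartier's theorem) can be made to work but must be stated as part of the proof, not as an afterthought.

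The genuine gap is in your final sentence, which is the second assertion of the theorem: that for $k=\CC$ the connectedness of $\cG(X-\cS)$ ``can be arranged by choosing $\cS$ large enough.'' This is precisely the nontrivial input and does not follow from enlarging $\cS$ in any formal way, since $\cG(X-\cS)$ is a twisted (non-constant) ind-group and its connectedness is not automatic. The paper proves it by realizing $\cG\simeq\psi_{_*}^{^{\Gamma}}(\cG')$ for a Galois cover $\psi:Y\to X$ as in \eqref{papparappakabaap}, taking $\cS:=\psi(Y-U)$ where $\cG'$ restricts to the constant split group scheme over $U$, identifying $\cG(X-\cS)$ with $\Mor_{\Gamma}(\Sigma^*,G)$ for $\Sigma^*=\psi^{-1}(X-\cS)$, and invoking the equivariant extension of Drinfeld's connectedness argument \cite[Thm 9.5]{hongkumar}, which rests on the simple connectedness (Bruhat decomposition and cell structure) of the twisted affine Grassmannian. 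Without this step your proposal establishes only the conditional half of the statement.
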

\begin{proof} 
First, we check this for the Iwahori stack. Let $ \cG=\cG^{\tt I}$ denote the Iwahori group scheme.

Let $\cG'$ on $Y$ be as above, so that we have \eqref{papparappakabaap}.  Then $\cG_{k(Y)}' \simeq \cG \otimes_{k(X)} k(Y)$ is {\em split} to which the $\Gamma$ action on $\cG_{k(Y)}'$ extends. Since $\cG$ is quasi-split, let $B_{k(X)}$ be a Borel of $\cG$ over $k(X)$. Then $B_{k(X)}$ is a form of a Borel subgroup $B_{k(Y)}' \subset \cG_{k(Y)}'$. Thus, $\Gamma$ preserves the inclusion $B_{k(Y)}' \subset \cG_{k(Y)}'$ and therefore a Borel of $G$. Taking $\Gamma$--invariants of unipotent subgroup schemes of $\cG'(k(Y))$, we get those of $\cG(k(X))$. By \cite[page 67]{faltings} or \cite[page 524]{heinloth}, $\cG(k(X))$ is generated by them.

We now show for $k = \mathbb C$,  that there exists a set $\cS$ of closed points of $X$ containing $\cR$ associated to the group scheme $\cG$ such that $\cG(X - \cS)$ is {\em connected}.  Since $\cG_{k(Y)}'$ is split, simply connected and semisimple,  it extends to an open subset $U$ of $Y$ as the constant group scheme $U \times_k G$. We set $\cS:= \psi(Y - U)$. Now we are in position to apply the equivariant extension of Drinfeld's proof in \cite[Theorem 9.5]{hongkumar}, which shows that if $\Sigma^*:= \psi^{-1}(X - \cS)$, the ind-group $\Mor_{\Gamma}(\Sigma^*,G)$ is connected. 

Let us briefly indicate the argument in this proof for the sake of the reader. The simple connectedness of the twisted affine Grassmannian in the analytic setting, is reduced to the existence of the Bruhat decomposition and thereby to the existence of a cell structure. In the split case, one is able to use this aspect of topology to enable the use of the homotopy exact sequence in the Drinfeld argument.  Now $\cR \subset \cS$ and the ind-group $\cG(X - \cS)$ identifies with $\Mor_{\Gamma}(\Sigma^*,\cG_{\Sigma^*}') = \Mor_{\Gamma}(\Sigma^*,G)$  and is therefore {\em connected}.

 The flag variety $\cF \ell_y:=(L\cG_y/L^+\cG_y)$ is connected and reduced by \cite[Theorems 0.1 and 0.2]{pradv}.   Then, by \cite[Theorem 4]{heinloth} of Heinloth (see \S \ref{uniformization}) the morphism $\tcg: \prod_{x \in \cS} \cF\ell_{{\tt F}_x} \ra \cM(\cG)$ has \'etale local sections. To prove that the ind-group $\cG(X - \cS)$ is reduced we proceed as in Laszlo-Sorger \cite[proof of Proposition 5.1]{ls}.  The morphism $\tcg$ being \'etale locally a product of an \'etale neighbourhood of $\cM(\cG)$ with $\cG(X - \cS)$, it follows that the ind-scheme of sections $\cG(X - \cS)$ is {\em reduced and connected}, and, hence integral.

We have all the ingredients to appeal to the arguments  of Laszlo and Sorger in the proof in \cite[Proposition 5.1]{ls}. The only difference is that they choose a point $p \in X -\cR$ while we work with the subset $\cS$ containing $\cR$.  Since we have checked that $\cG(X - \cS)$ is integral, applying \cite[Lemma 7.2]{bl} and then \cite[Proposition 7.4]{bl}, we have the canonical isomorphisms of sections\small
\begin{equation*}  \text{H}^0 \left( \cM^{\tt I}, L \right) =
\text{H}^0 \left(\prod_{x \in \cS} \cF\ell_{{\mathbf{a}}_x}, \boxtimes_{x \in \cS} L_{x} \right)^{\cG(X - \cS)} = \text{H}^0 \left(\prod_{x \in \cS} \cF\ell_{\mathbf{a}_x}, \boxtimes_{x \in \cS} L_{x} \right)^{\text{Lie}(\cG)(X - \cS)}.
\end{equation*}\normalsize 
To complete the proof for a general $\cG$, we use \eqref{isocohgps} and the projective fibrations $\cF \ell_{\mathbf{a}_x} \ra \cF \ell_{{\tt F}_x}$ to reduce the computation to the one on the Iwahori stack $\cM^{{\tt I}}$. In other words, we replace $\cM^{\tt I}$ by $\cM(\cG)$ and $\cF \ell_{\mathbf{a}_x}$ by $\cF \ell_{{\tt F}_x}$ in the canonical isomorphisms above.
\end{proof}

\begin{rem} We note that there was no real need to go to the Iwahori first and then deduce it for all $\cG$. The proof of the Iwahori case had a well-understood template \cite{ls} and so we take this path.
\end{rem}
\begin{rem} When $\cG$ restricted to $X - \cR$ is isomorphic to $G \times_{\CC} (X - \cR)$ e.g. \cite[Theorem 5.2.7]{bs}, the above proof shows that $\cS$ reduces to $\cR$.
\end{rem}

\section{The geometry of propagation} \label{geometrypropogationold} 
{\it In this section the characteristic of $k$ is zero.}
Let $\tcm \geq 0$, and let $C(B)$ denote the dominant Weyl chamber for the fixed Borel subgroup $B \subset G$. Let
\begin{equation}
P_{\tcm} := \{\lambda \in C(B) \mid \lambda(\theta^{\vee}) \leq \tcm \}
\end{equation}
denote the set of dominant weights of {\em level $\tcm$}. On $\cF\ell_{{\mathbf{a}}}$, the $\lambda \in P_{\tcm}$ determine the line bundle 
\begin{equation} L_{\epsilon_0}^{\otimes \tcm} \otimes L_{\lambda}.
\end{equation}
Let $\mf_{_{\tt F, \mathbf{a}}}:\cF\ell_{{\mathbf{a}}} \to \cF \ell_{\tt F}$ be the canonical morphism \eqref{theflagfibres}.

\begin{defi} \label{pcf} Let $\tcm$ be a multiple of $l({\tt F})$ \eqref{localofF}. For a facet $\tt{F}$, let 
\begin{equation}\label{levelF}
P_{\tcm}^{\tt F} := \{\lambda \in P_{\tcm} \mid L_{\epsilon_0}^{\otimes \tcm} \otimes L_{\lambda} = \mf_{_{\mathbf{a},\tt F}}^{^*}(\xi), \xi \in \text{Pic}(\cF \ell_{\tt F}) \}
\end{equation} 
i.e., the subset of $P_{\tcm}$ consisting of weights $\lambda$ such that $L_{\epsilon_0}^{\otimes \tcm} \otimes L_{\lambda}$  is the pullback of a line bundle  on $\cF \ell_{\tt F}$. 
\end{defi}   

Thus,  $P_{\tcm}^{\mathbf{a}}=P_{\tcm}$ and  $P_{\tcm}^{\tt F}$ corresponds to {\bf dominant weights} of the form \eqref{Bxchar} which, replacing $n \ell$ \eqref{f} by $\tcm$, come from {\bf positive integral} solutions in \eqref{isoln}. We denote the line bundle $\xi$ on $\cF \ell_{\tt{F}}$ in \eqref{levelF} corresponding to $\lambda \in P_{\tcm}^{\tt{F}}$ by
\begin{equation} \label{Llambdac} 
L(\lambda,\tcm).
\end{equation} 
 Let ${\tt y}:=\{y_1, \cdots, y_t \}$ denote a finite set of points in $X- \cR$. For each $x \in \cR$ (resp. $y_i$) we choose the facet $\tt{F}_x$ (resp. $\tt{F}_i$) lying in the closure of $\mathbf{a}$.  We label these with dominant weights $\lambda_x  \in P_{\tcm}^{\tt{F}_x}$ for $x \in \cR$ and $\mu_i \in P_{\tcm}^{\tt{F}_i}$ for $1 \leq i \leq t$.

\begin{defi} Let $\lambda \in P_{\tcm}^{\tt{F}}$ and $\boldsymbol\lambda=(\cdots, \lambda_x,\cdots)$ for $x \in \cR$ with $\lambda_x \in P_{\tcm}^{\tt{F}_x}$. We define the ``geometric space of pre-vacua" as:
\begin{equation}\label{prevacua}
\cH^{^{\tcge}}_{\lambda} := \text{H}^0(\cF\ell_{\tt{F}}, L) \quad \quad \cH^{^{\tcge}}_{\boldsymbol\lambda} := \text{H}^0( \prod_{x \in \cR} \cF\ell_{\tt{F}_x}, \boxtimes_{x \in \cR} L_x),
\end{equation}
and similarly $\cH^{^{\tcge}}_{\boldsymbol\mu}$ for $\boldsymbol\mu=(\cdots,\mu_i,\cdots)$ with $\mu_i \in P_{\tcm}^{\tt{F}_i}$.
\end{defi}
Thus, our $\cH^{^{\tcge}}_{\lambda}$ is dual to $\cH_{\lambda}$ in Beauville's notation \cite[page 4]{beau}. For a dominant weight $\lambda$, we denote the space of sections of $L_{\lambda}$ by 
\begin{equation}
V_{\lambda}:=H^0(G/B,L_{\lambda}).
\end{equation}
In our convention these are the Borel-Weil irreducible modules for $G$.
 
In other words, for Beauville and Laszlo-Sorger \cite{ls}, $V_{\lambda}$ is a subspace of  $\cH_{\lambda}$, in our notation {\em $V_{\lambda}$ is a quotient of $\cH^{^{\tcge}}_{\lambda}$}.
 The following is the {\it dual} analogue of \cite[Proposition 2.3 and Corollary 2.4]{beau}.

 \begin{thm} \label{propofvacua} ({\it ``Propagation of vacua"})  Let $\gfr = \gfr_{_X} := \text{Lie}(\cG)$. For $1 \leq i \leq t$, the canonical maps $V_{\mu_i} \hra \cH^{^{\tcge}}_{\mu_i}$ induce the isomorphism 
 \begin{equation} \label{2.3} [\cH^{^{\tcge}}_{\boldsymbol{\lambda}} \otimes_i V_{\mu_i}]^{\gfr(X - \cR)} = [\cH^{^{\tcge}}_{\boldsymbol{\lambda}} \otimes \cH^{^{\tcge}}_{\boldsymbol{\mu}}]^{\gfr({X - \cR \cup \tt y})}.
 \end{equation}
\end{thm}
\begin{proof}   Let $\cG_{_{{\tt I},\cR}}$ be a group scheme which is Iwahori at marked points $\cR$. Let $\cG_{_{{{\tt I},\cR \cup \tt y}}}$ be the group scheme with the same transition function as $\cG_{_{{\tt I},\cR}}$ which is Iwahori also at $\tt y$ and isomorphic to $\cG_{_{{\tt I},\cR}}$ on the complement of $\tt y$. Consider the \'etale locally trivial fibration  $\pi: \cM(\cG_{_{{{\tt I},\cR \cup \tt y}}}) \ra \cM(\cG_{_{{\tt I},\cR}})$ of algebraic stacks with fibers $(G/B)^t$, with $t = |\tt y|$ and the atlas map $\tcg: \prod_{x \in \cR} \cF \ell_{\tt{F}_x} \ra \cM(\cG_{_{{\tt I},\cR}})$. By the uniformization theorem \eqref{unif1}, the morphisms $\pi$ and $\tcg$ define the following pull-back diagram:

\begin{equation}{\begin{tikzcd}
	{ \prod_{x \in \cR} \cF \ell_{\tt{F}_x} \times (G/B)^{t}} &&& {\cM(\cG_{_{{{\tt I},\cR \cup \tt y}}})} \\
	&&& {  } \\
	{\prod_{x \in \cR} \cF \ell_{\tt{F}_x}} &   && {\cM(\cG_{_{{\tt I},\cR}})}
	\arrow["{\tcg_{_1}}", from=1-1, to=1-4]
	\arrow["{pr_{_1}}", from=1-1, to=3-1]
	\arrow["{\pi}",from=1-4, to=3-4]
	\arrow["{\tcg}",from=3-1, to=3-4]
\end{tikzcd}}\end{equation}
Then $\tcg_1$  becomes an atlas for $\cM(\cG_{_{{{\tt I},\cR \cup \tt y}}})$ which also has the alternate atlas 
\begin{equation} \label{At2} \tcg_{\cR \cup \tt y}: \prod_{x \in \cR \cup y} \cF \ell_{\tt{F}_x} \ra \cM(\cG_{_{{\tt I},\cR \cup y}}).
\end{equation}

 Define $L(\lambda_x, \tcm)$ over $\cF\ell_{\tt{F}_x}$  following \eqref{Llambdac}.   Let $L_{\cR}$ be the box tensor product of $L(\lambda_x, \tcm)$ for $x \in \cR$. Let $\cL_{\cR}$ be the line bundle on  $\cM(\cG_{_{{\tt I},\cR}})$ to which $L_{\cR}$ goes down by \eqref{redtoIwa}. Similarly, working in addition with $\cF\ell_{\tt{F}_i}$, $\mu_i$ and $\tcm$ at the $y_i$'s, we can construct line bundles $L(\mu_i, \tcm)$ and get a line bundle $\cL_{\cR \cup y}$ on $\cM(\cG_{_{{{\tt I},\cR \cup \tt y}}})$.  Let $J_{\mu_i}$ be the line bundle on $G/B$ defined by the character $\mu_i$. It defines a line bundle $\cL_{\mu_i}$ of central charge zero on $\cM(\cG_{_{{{\tt I},\cR \cup \tt y}}})$.

Thus,  the global sections of $\tcg^* \cL_{\cR}$ identify with $\cH^{^{\tcge}}_{\boldsymbol{\lambda}}$ \eqref{prevacua} and those of $J_{\mu_i}$ with $V_{\mu_i}$. Further $$\tcg_1^* ( \cL_{\cR \cup \tt y} ) \simeq \tcg^*(\cL_{\cR}) \boxtimes_i J_{\mu_i} = \boxtimes_{_{x \in \cR}} L(\lambda_x, \tcm) \boxtimes_i J_{\mu_i}.$$  Hence, its global sections identify with
\begin{equation}
\cH^{^{\tcge}}_{\boldsymbol{\lambda}} \otimes_i V_{\mu_i}.
\end{equation}

On the other hand, from the second atlas \eqref{At2}, by definition we have $$\tcg^*_{\cR \cup \tt y} (\cL_{\cR \cup \tt y})= \boxtimes_{_{x \in \cR}}L(\lambda_x, \tcm) \boxtimes_{_{i \in \tt y}} L(\mu_i, \tcm)$$ and its global sections identify with $\cH^{^{\tcge}}_{\boldsymbol{\lambda}} \otimes \cH^{^{\tcge}}_{\boldsymbol{\mu}}$. Thus, by \eqref{papparappa1} via the two atlases of $\cM \left(\cG_{_{{{\tt I},\cR \cup \tt y}}} \right)$ we have 
\begin{equation}
\left[\cH^{^{\tcge}}_{\boldsymbol{\lambda}} \otimes_i V_{\mu_i} \right]^{\gfr({X - \cR})} = \Gamma \left( \cM \left(\cG_{_{{{\tt I},\cR \cup \tt y}}} \right), \cL_{\cR \cup \tt y} \right) = \left[\cH^{^{\tcge}}_{\boldsymbol{\lambda}} \otimes \cH^{^{\tcge}}_{\boldsymbol{\mu}} \right]^{\gfr({X - \cR \cup \tt y})}.
\end{equation}

\end{proof}

\subsection{Hecke transform of vacua} \label{Hecke transform}
We now do a process on the spaces of vacua which loses or inhibits its structure at a marking.
We use the notations of \S \ref{modulistack}.

Let the atlas points $A := \{x_1, \ldots, x_s\}$ and let us express the ramifications as $\cR = A \cup \tt{y}$ for a single point $\tt y$.  We now introduce {\it the Hecke modified group scheme $\cG'$ of $\cG$ along $\cG^{\tt I} \ra \cG$ at $\tt y$} defined as follows: 
\begin{equation}
 \cG' =
\begin{cases}
\cG   & \text{away from \tt y}, \\
G  & \text{at \tt y}.  
\end{cases}
\end{equation}
More precisely, it is isomorphic to $\cG$ away from $\tt y$ and at $\tt y$ we induce the {\em standard hyperspecial structure}, i.e., $\cG'_{\tt y} \simeq G$ via $\cG^{\tt I}$. We can see both these group schemes in the framework \eqref{heckeatgplevel}:  
\[\tiny\begin{tikzcd}
	& {\cG^I} \\
	\cG && {\cG'}
	\arrow[from=1-2, to=2-1]
	\arrow[from=1-2, to=2-3]
\end{tikzcd}\]
and their associated stacks in the framework of Hecke correspondences as follows:
\begin{equation}\label{hecketwistedprop} \tiny
\begin{tikzcd}
	& { \cM^{\tt I}} \\
	{\cM(\cG)} && {\cM(\cG')} 
	\arrow["\pi", from=1-2, to=2-1]
	\arrow["\pi'"', from=1-2, to=2-3]
\end{tikzcd} \end{equation}

We now label the markings with dominant weights from $P_{\tcm}$, namely $\lambda_x$ at the $x \in \cR$ and $\lambda_{\tt y}$ to the point $\tt y$. 
For $z \in \cR$, let $L(\lambda_z, \tcm)$ denote the line bundle on $\cF \ell_{{\tt F}_z}$ (see\eqref{Llambdac})  given by the character $\lambda_z$  of the closed fiber $\cG_z$. By \eqref{redtoIwa}, $\boxtimes_{z \in \cR} L(\lambda_z, \tcm)$ on $\cQ:=\prod_{z \in \cR} \cF \ell_{{\tt F}_z}$ descends to a line bundle $L(\boldsymbol\lambda)$ on $\cM(\cG)$.  Let $\gfr = \gfr_{_X} := \text{Lie}(\cG)$. Then, we have canonical isomorphisms:
\begin{eqnarray} \otimes_{_{x \in \cR}} \cH^{^{\tcge}}_{_{\lambda_x}} & = &   H^0(\cQ, \boxtimes_{_{x \in \cR}} L(\lambda_x, \tcm)) \\
\big[ \otimes_{_{x \in \cR}} \cH^{^{\tcge}}_{_{\lambda_x}} \big]^{\gfr(X - \cR)}  &  = &  H^0(\cM(\cG), L(\boldsymbol\lambda)).
\end{eqnarray}
By \eqref{redtoIwa}, let $\boxtimes_{_{z \in A}} L(\lambda_z, \tcm)$ descend to $\cM(\cG')$ as $L(\boldsymbol{\lambda_1})$.  We have the following {\it Hecke transform of vacua}.

\begin{thm}\label{enroutetoHecke transform}   Let $L(\lambda_z)$ denote the line bundle of central charge zero on $\cM(\cG)$ given by the character $\lambda_z$ of $\cG_z$, i.e. $\tcg^*(L(\lambda_z)) = L(\lambda_z, \tcm)$.  Let $$W := \pi'_{_*}\pi^*L(\lambda_{\tt y}),$$ a vector bundle on $\cM(\cG')$. We have the following canonical isomorphism:
\begin{equation} \label{Hecke transformisom}
H^0(\cM(\cG), L(\boldsymbol\lambda)) = H^0(\cM(\cG'), L(\boldsymbol\lambda_1) \otimes W)
\end{equation}
Equivalently, viewing $W$ as an irreducible representation of $G \simeq \cG'_{\tt y}$, we have:
\begin{equation}
\big[ \otimes_{z \in \cR} \cH^{^{\tcge}}_{_{\lambda_z}} \big]^{\gfr(X - \cR)} = \big[ \otimes_{x \in A} \cH^{^{\tcge}}_{_{\lambda_x}} \otimes W \big]^{\text{Lie}(\cG')(X - A)} = \big[ \otimes_{x \in A} \cH^{^{\tcge}}_{_{\lambda_x}} \otimes W \big]^{\gfr(X - A)}
\end{equation}
In particular, if  $\lambda_{\tt y}$ is $0$,  we have an identification:
\begin{equation}
\big[ \otimes_{z \in \cR} \cH^{^{\tcge}}_{_{\lambda_z}} \big]^{\gfr(X - \cR )} = \big[ \otimes_{x \in A} \cH^{^{\tcge}}_{_{\lambda_x}} \big]^{\text{Lie}(\cG')(X - A)} = \big[ \otimes_{x \in A} \cH^{^{\tcge}}_{_{\lambda_x}} \big]^{\gfr(X - A)}
\end{equation}
\end{thm}
\begin{proof}
These follow by noting that $\pi'_* \pi^* L(\boldsymbol\lambda))= L(\boldsymbol\lambda_1) \otimes W$. 
\end{proof}

 \section{Equivariant conformal blocks and parahoric vacua}
 
{\it The field $k$ is assumed to be of characteristic zero.}
 The aim of this section is to interpret the results on the space of sections of line bundles on stacks of parahoric torsors in the light of the equivariant realisation of parahoric Bruhat--Tits group schemes as {\em invariant direct images} of semisimple group schemes on a ramified Galois cover as enunciated in \cite[Theorem 5.2.7, Remark 5.2.8]{bs} (see also \cite{pr2024} for the setting in positive characteristics). We have already seen this in \eqref{conformalstory}. Let us work in that setting in the light of \eqref{papparappakabaap}.
 
We have a semisimple group scheme $\cG'$ on $Y$ which is generically split. Let $\gfr_Y := \text{Lie}(\cG')$ be the Lie algebra of $\cG'$ and $\gfr_X:= \text{Lie}(\cG)$ the Lie algebra of $\cG$ on $X$.  Then, we see that by \eqref{papparappakabaap} we have:
\begin{equation}
\psi_{_*}^{^\Gamma}(\gfr_Y) = \gfr_X.
\end{equation}
Next, with $\cS$ containing $\cR$ as in \eqref{papparappa1}, we consider the Lie algebra $\gfr_Y \left[Y - (\psi^{-1}(\cS) \right]^{\Gamma}$
of $\Gamma$--equivariant regular sections $f: Y - \psi^{-1}(\cS) \rightarrow \gfr_Y$. Thus, 
\begin{eqnarray}\label{level1}
\cG' \left(Y - \psi^{-1}(\cS) \right)^{\Gamma}  &=&  \cG(X - \cS),\\ \label{level2}
\gfr_Y \left[Y-(\psi^{-1}(\cS) \right]^{\Gamma} &=& \gfr_X(X - \cS).
\end{eqnarray}

Let $y \in \psi^{-1}(\cS)$. Let $\gfr_{_{Y,y}}$ (resp. $\gfr_{_{X,x}}$) denote the Lie algebras of the closed fibres $\text{Lie}(\cG'_y)$ (resp. $\text{Lie}(\cG_x)$).
Then, by \eqref{papparappakabaap}, we have the identification:
\begin{equation}
\gfr_{_{Y,y}}^{^{\Gamma_y}} \simeq \gfr_{_{X,x}}
\end{equation}
where $\Gamma_y$ is the stabilizer of the $\Gamma$--action at $y \in Y$. Let $\mathbb{D}_x :=\Spec({\cO}_x)$
 and $\mathbb{D}^{^\times}_x:=\Spec(K_x)$. Then, we may consider the central extension:
\begin{equation}
\hat{\gfr}_{_{Y,y}} := \gfr_Y(\psi^{-1}(\mathbb{D}^{^\times}_x))^{\Gamma} \oplus k,
\end{equation}
where $\gfr_Y(\psi^{-1}(\mathbb{D}^{^\times}_x))$ is the space of $\Gamma$--equivariant regular sections $\psi^{-1}(\mathbb{D}^{^\times}_x) \to \gfr_Y$.
We consider the parahoric subalgebra
\begin{equation}
\hat{\mathfrak{p}}_y := \gfr_Y(\psi^{-1}(\mathbb{D}_x))^{\Gamma} \oplus k.
\end{equation}
Let $\tcm$ denote a fixed central charge. For $\lambda_x \in P_{\tcm}^{\tt{F_x}}$ (see \eqref{pcf}), let $L(\lambda_x,\tcm)$ denote the line bundle on $\cF \ell_{{\tt F}_x}$ (see \eqref{Llambdac}). 

By (\ref{redtoIwa}), the line bundle $\boxtimes_{x \in \cS} L(\lambda_x, \tcm)$ on $\prod_{x \in \cS} \cF \ell_{{\tt F}_x}$ descends to $\cM(\cG)$, say as a line bundle 
\begin{equation} \label{Llambda} L(\boldsymbol\lambda), \quad \text{for} \quad \boldsymbol\lambda = (\ldots, \lambda_x, \ldots)_{x \in \cS}.
\end{equation}
 
 We firstly have the following identification:
\begin{equation}\label{forboldsymbollambda}
\otimes_{x \in \cS} \cH^{^{\tcge}}_{_{\lambda_x}}  =   H^0 \left( \prod_{x \in \cS} \cF \ell_{{\tt F}_x}, \boxtimes_{x \in \cS} L( \lambda_x, \tcm) \right).
\end{equation}

As in the classical theory of Tsuchiya-Ueno-Yamada (see \cite{beau}), we have the integrable highest weight representation $\cH( \lambda_x)$ of $\hat{\gfr}_{_{Y,y}}$ of level $\tcm$ for $\lambda_x \in P_{\tcm}^{\tt{F}_x}$.

Recall the following notion of {\em twisted or equivariant vacua}, even for nodal curves. 
\begin{defi}(cf. \cite[Equation (17), (18)]{hongkumar}) As in \cite[page 480]{tuy} for $N$-pointed stable curves, for an equivariant pointed stable curve $Y$, we define
\begin{equation} \label{hkdefntwistedconformalblocks} {\mathcal V}_{_{Y,\Gamma,\phi}}(\cR,\boldsymbol\lambda)^{^{\dagger}} := \text{Hom}_{_{\gfr_Y [Y -\psi^{-1}(\cS)]^{\Gamma}}}\left(\otimes_{x \in \cS} \cH(\lambda_x), k \right).
\end{equation}
\end{defi}
The following corollary for $\cG$ tamely-split answers the question of a Verlinde formula for the space of sections of line bundles on $\cM_X(\cG)$ \cite[\S 8.2]{pr2024}. In the case $k=\CC$ i.e. when $G$ is split, this was earlier proven in Corollary \ref{SplitGamma=Verlinde}.
\begin{Cor} \label{thmE} The space equivariant or twisted vacua has the following geometric realization
\begin{equation}\label{georeal}
{\mathcal V}_{_{Y,\Gamma,\phi}}(\cR,\boldsymbol\lambda)^{^{\dagger}} \simeq H^0(\cM(\cG), L(\boldsymbol\lambda)).
\end{equation}
\end{Cor}
\begin{proof}
As we have remarked on notations, the integrable highest weight representation $\cH( \lambda_x)$
 is precisely the dual of the geometric space of vacua $\cH^{^{\tcge}}_{\lambda_x}$ \eqref{prevacua}. After the identification \eqref{forboldsymbollambda}, the claim follows from \eqref{level2} and Theorem \eqref{papparappa1}.
\end{proof}

\subsection{Parahoric vacua} \label{parahoricvacuasetup}
Let $X$ be a stable curve with markings at $\cR$ lying outside the node together with a parahoric group scheme $\cG$. Let $\cS$ be a finite subset of closed points of $X$ containing $\cR$ as in \eqref{papparappa1}. For each $x \in \cS$, let $\cG |_{\mathbb{D}_x}$ denote the restrictions, say corresponding to the facet $\tt{F}_x$ of $\mathbf{a}$. Let  $\tilde{L}^{+} \left( \cG |_{\mathbb{D}_x} \right)$ denote the central extensions of their jet group as in \eqref{L+} which is stated for the Iwahori group scheme. We fix a $\tcm =n \ell$ \eqref{f} for some $n \in \mathbb{N}$. For each $x \in \cS$, as in \eqref{chiL} we suppose that we are given characters 
$$\chi_x: \tilde{L}^{+} \left( \cG |_{\mathbb{D}_x} \right) \ra \mathbb{G}_m,$$ 
of the same central charge $\tcm$. Recall that by \S \ref{geometrypropogationold} each $\chi_x$ uniquely determines a $\lambda_x$ lying in $P_{\tcm}^{\tt{F}_x}$ since ${\chi_x}$ equivalently determines the line bundle $L(\lambda_x,\tcm)$ on $\cF \ell_{\tt{F}_x}$.  In other words, $\chi_x$ determines the following character  \begin{equation} (\lambda_x,\tcm): \tilde{L}^{+} ( \cG_{\mathbf{a}} ) \ra \tilde{L}^{+} ( \cG_{\tt{F}_x} )= \tilde{L}^{+} \left( \cG |_{\mathbb{D}_x} \right) \stackrel{\chi_x}{\ra} \mathbb{G}_m. \end{equation} 
This data determines a line bundle on $\prod_{x \in \cS} \tt{F}_x$. When $X$ is smooth, we get the line bundle $L(\boldsymbol\lambda)$ \eqref{Llambda} on $\cM_X(\cG)$. We have the following definition of {\it parahoric vacua} when $X$ is more generally a marked stable curve.

\begin{defi} \label{parahoricvacua} By parahoric vacua on a stable curve $X$ associated to $\left(X,\cG, \cS, \tcm, \{\chi_x \}_{x \in \cS} \right)$, we mean the Verlinde space  defined in \cite{tuy} as $${\mathcal V}_{_{X}}(\cS,\boldsymbol\lambda)^{^{\dagger}}:=\big[ \otimes_{x \in \cS} Hom(\cH(\lambda_x),k) \big]^{\text{Lie}(\cG)(X - \cS)}.$$
\end{defi}

\section{Recovery of results of Hong-Kumar \cite{hongkumar}} \label{recovery}
{\it The field $k$ is assumed to be of characteristic zero.}
The aim of this section is to derive the results of \cite{hongkumar} from those of this article with a few deviations. We note that all the results in \cite{hongkumar} are expressed in terms of the spaces of twisted covacua, while the reformulations in this article and the proofs are for the dual spaces, namely the spaces of twisted vacua.

\subsubsection {\cite[Theorem A page 2192, 2193]{hongkumar}} {\em This result is the theorem  of propagation of twisted vacua}. This can be realized as a corollary in two steps. The first step is to geometrically realize the space of twisted vacua as space of sections of a suitable line bundle of a parahoric moduli stack. This is Corollary \ref{georeal}. The second step is  the result on propagation in the geometric setting, namely Theorem \ref{propofvacua}. These two together immediately give the twisted propagation.

\subsubsection {\cite[Theorem B, page 2193]{hongkumar}} {\em This theorem is the twisted or equivariant analogue of the usual factorisation principle for the space of covacua}. We assume that the group schemes in this subsection are {\em generically split and not just tamely split} since our approach is a reduction to the work of Tsuchiya-Ueno-Yamada \cite{tuy}, which proves this in the split  setting.

The primary point behind the philosophy of ``factorisation" could be summarised as follows. It provides the framework for the computation of the dimensions of the space of vacua ${\mathcal V}_{_{X}}(\cR,\boldsymbol\lambda)^{^{\dagger}}$ on smooth projective curves $X$ of any genus $g$. The process or strategy is primarily via degenerations, thus is an inductive one. In the foundational article \cite{tuy} this is carried out by firstly connecting the marked curve $(X, \cR)$ to a marked irreducible nodal curve with a single node $(X_{_o}, \cR_{_o})$ (by constructing a family $(\mathcal X_{_S}, \cR_{_S})$) of marked curves), so that the markings continue to live on $X_{_o}$ and avoid the node $x_{_o} \in X_{_o}$. Then an analogous space of vacua  ${\mathcal V}_{_{X_{_o}}}(\cR,\boldsymbol\lambda)^{^{\dagger}}$ is defined on the curve $X_{_o}$. It is then shown that there is a canonical isomorphism $${\mathcal V}_{_{X}}(\cR,\boldsymbol\lambda)^{^{\dagger}} \simeq {\mathcal V}_{_{X_{_o}}}(\cR,\boldsymbol\lambda)^{^{\dagger}}$$
by constructing a locally free sheaf $\mathbb V$ over the family $\mathcal X_{_S}$.

The next step is to relate the space of vacua ${\mathcal V}_{_{X_{_o}}}(\cR,\boldsymbol\lambda)^{^{\dagger}}$ on the nodal curve $X_{_o}$ to the spaces of vacua on the normalization $\tilde{X}_{_o}$ (of genus $g -1$) with two extra markings $p', p'' \in \tilde{X}_{_o}$ above the node $x_{_o}$, as a decomposition of spaces of vacua indexed by the space $P_{_\tcm}$, of dominant weights of level a fixed multiple of $\ell$ (see \cite[Proposition 2.2.6]{tuy}). This is the key induction which finally ends in an explicit computation on $\mathbb P^1$.

In \cite{hongkumar}, this process is carried out step by step for the case of twisted covacua, where suitable conditions need to be imposed for the proof to work. This is the burden of work in \cite[Theorem B]{hongkumar}. 

We have the following {\em factorisation} theorem for the spaces of vacua.  Because of the splitness assumption,  we have one less case than the ones considered in \cite{hongkumar}, namely those which come from representation of the Galois group into the group of outer automorphisms of $G$. On the other hand, we do have our results even in the split case in far more generality as we have mentioned in \S12. In fact, if we assume \cite{tuy} for the Iwahori stack in the {\em tamely split case}, then we can deduce all the cases verbatim. The theorem below carries this out in the split case and the diligent reader will have no difficulty extending these to the tamely split case. Recall that $P_{_{\tcm}}$ in \cite{tuy} is $P_{_{\tcm}}^{{\mathbf{a}}}$ in our notation.

\begin{thm} \label{superparahoricfactorisation} Let $\tcm= m \cdot \ell$ with $\ell$ as in \eqref{f}, and any positive integer $m$. Let $\cG \# \cG^{^{I}}_{_{p',p''}}$ be a group scheme on ${\tilde{X}_{_o}}$ obtained by gluing the parahoric group scheme as $\cG$ on $\mathbb{D}_x$ for $x \in \cR$ with the  Iwahori group scheme at the points $p'$ and $p''$.  For $\lambda_x \in P_{\tcm}^{\tt{F}_x}$, with notations as in \eqref{Llambda} and \eqref {hkdefntwistedconformalblocks}, for a smooth projective curve $X$ we have the following decomposition theorem:
\begin{equation}\label{parahoricfactorisation}
H^0 \left(\cM_{_X} \left(\cG \right), L(\boldsymbol\lambda) \right) \simeq \bigoplus_{_{\mu \in P_{_{\tcm}}^{\mathbf{a}}}} H^0 \left(\cM_{_{\tilde{X}_{_o}}} \left(\cG \# \cG^{^{I}}_{_{p',p''}} \right), L \left(\boldsymbol\lambda, \mu,\mu' \right) \right)
\end{equation}
or equivalently with notation as in Definition \ref{parahoricvacua} we have
\begin{equation} \label{verlindefactoristion} {\mathcal V}_{_{X}} \left(\cR,\boldsymbol\lambda \right)^{^{\dagger}} \simeq  \bigoplus_{_{\mu \in P_{_{\tcm}}^{\mathbf{a}}}} {\mathcal V}_{_{\tilde{X}_{_{o}}}} \left(\cR,\boldsymbol\lambda, \mu,\mu' \right)^{^{\dagger}}.
\end{equation}
When $X$ is only a {\em marked stable curve}, \eqref{verlindefactoristion} continues to hold if we interpret the term on the left side as parahoric vacua by Definition \ref{parahoricvacua}. 
\end{thm}
\begin{proof} The proof is simply a reformulation of the result in \cite{tuy}  and \cite{ls}, together with an application of Remark \ref{bwbprep}, obtained by setting $\cG':= \cG^{\tt I}$ in \S \ref{Hecke transform} when the curve ($X$ and/or $\tilde{X}_{_o}$) is smooth. 

By \cite[Section 6]{tuy} we can restate \cite[Proposition 2.2.6]{tuy} as an isomorphism
\begin{equation}\label{tuy}
{\mathcal V}^{^{par}}_{_{X}}(\cR,\boldsymbol\eta )^{^{\dagger}} \simeq  \bigoplus_{_{\mu \in P_{_{\tcm}}}} {\mathcal V}^{^{par}}_{_{\tilde{X}_{_{o}}}}(\cR,\boldsymbol\eta, \mu,\mu')^{^{\dagger}}
\end{equation}
as a relation between the objects on smooth projective curves $X$ and $\tilde{X}_{_o}$ with markings and parabolic structures, which could be arbitrary. The data of  
$\boldsymbol\eta$ is the tuple $\{\eta_{_x} \in P_{_{\tcm}}\}_{_{x \in \cR}}$. Now using the formalism of Beauville as done in Laszlo-Sorger \cite[page 522]{ls}, the spaces on either side can be interpreted as spaces of sections of line bundles on the moduli stack of $G$-bundles with full-flag structures at the markings $\cR$, which is precisely the Iwahori stack $\cM_{_X}(\cG^{\tt I})$. This is precisely \cite[Theorem 1.2]{ls}.

More precisely, we have the following identification from \cite[Theorem 1.2]{ls}.
\begin{equation} 
{\mathcal V}^{^{par, full}}_{_{X}}(\cR,\boldsymbol\eta)^{^{\dagger}} \simeq H^0 \left(\cM_{_X} \left(\cG^{\tt I} \right), L(\boldsymbol\eta) \right)
\end{equation}
for the line bundle $L(\boldsymbol\eta)$ on $\cM_{_X}(\cG^{\tt I})$, as defined before \eqref{forboldsymbollambda}, and similarly on the normalization $\tilde{X}_{_o}$. Thus, we see that \eqref{tuy} translates for the Iwahori stack as:
\begin{equation} \label{Iwahoricfactorisation}
H^0 \left(\cM_{_X} \left(\cG^{\tt I} \right), L \left(\boldsymbol\eta \right) \right) \simeq \bigoplus_{_{\mu \in P_{_{\tcm}}^{\mathbf{a}}}} H^0 \left(\cM_{_{\tilde{X}_{_o}}} \left(\cG_{_{\tilde{X}_{_o}}}^{\tt I} \right), L(\boldsymbol\eta, \mu,\mu') \right).
\end{equation}
where we hasten to add that the right hand side of this factorisation equation, the group scheme $\cG_{_{\tilde{X}_{_o}}}^{\tt I}$ now over the normalization $\tilde{X}_{_o}$ has added Iwahori structures at the two points $p', p''$ as well.
 
When $X$ and $\tilde{X}_{_o}$  are smooth projective curves, the space of sections on either side of \eqref{Iwahoricfactorisation} can be expressed as follows. When over the smooth curve $X$, the space  $H^0 \left(\cM_{_X}(\cG), L(\boldsymbol\lambda) \right)$ can be computed by pulling back the line bundle $L(\boldsymbol\lambda)$ to the Iwahori stack by $\pi_{_{\tcs}}: \cM_{_X} \left(\cG^{\tt I} \right) \ra \cM_{_X}(\cG)$ \eqref{heckecorresp}.  

When over  $\tilde{X}_{_o}$ with extra Iwahori datum at $p', p''$, by choosing the same gluing data at the markings, we get the canonical morphism $\cG_{_{\tilde{X}_{_o}}}^{\tt I} \to \cG \# \cG^{^{I}}_{_{p',p''}}$ which is an isomorphism over the points $p',p''$ and coincides with the earlier morphism $\cG^{\tt I} \to \cG$ at the markings in $\cR$. This induces an analogous Hecke morphism for stacks of torsors on $\tilde{X}_{_o}$: $$\pi_{_{\tcs}}: \cM_{_{\tilde{X}_{_o}}}(\cG^{\tt I}) \ra \cM_{_{\tilde{X}_{_o}}}(\cG \# \cG^{^{I}}_{_{p',p''}}).$$   Whence, in either case, \eqref{Iwahoricfactorisation} implies \eqref{parahoricfactorisation} by Remark \ref{bwbprep}. 

We now consider the case $X$ is only a marked stable curve. Taking $\eta_x:=\lambda_x$ for each $x \in \cS$, notice that $\big[ \otimes_{x \in \cS} \cH(\lambda_x) \big]^{\text{Lie}(\cG)(X - \cS)}$ of Definition \ref{parahoricvacua} is defined to be the left-hand side of \eqref{tuy}. Since $\lambda_x$ belong to $P_{\tcm}^{\tt{F}_x}$,  since $\tilde{X}_{_o}$ is a smooth projective curve, by the Hecke transform result \eqref{Hecke transformisom} we have the natural isomorphism
$${\mathcal V}^{^{par}}_{_{\tilde{X}_{_{o}}}}(\cR,\boldsymbol\eta, \mu,\mu')^{^{\dagger}}={\mathcal V}_{_{\tilde{X}_{_{o}}}}(\cR,\boldsymbol\lambda, \mu,\mu')^{^{\dagger}}.$$
This proves the second assertion.
\end{proof}

\begin{rem}The proof of  \cite[Theorem B, page 2193]{hongkumar}  is an equivariant generalisation of \cite{tuy}.  Our perspective is that degeneration to nodal curve is a technique for proving the factorisation in the inductive step, while  \eqref{superparahoricfactorisation} expresses the actual point of factorisation. This isomorphism  can easily be deduced from the classically known facts if we bypass nodal curves and relate the spaces over smooth projective curves of genus $g$ with ones on genus $g-1$. \end{rem}
\begin{rem} When $X$ is only  {\em an irreducible nodal  curve, or more generally a stable curve with markings}, in  \eqref{verlindefactoristion} the space of  parahoric vacua as in Definition \ref{parahoricvacua} cannot be interpreted merely in terms of the space of sections of a line bundle on the stack of $\cG$-torsors on $X$. Such a description would involve a stack which accommodates data from the singular date on the curve. When $G$ is the linear group, this data would involve torsion-free sheaves on the nodal curve. The factorisation formula \cite{tuy} describes these spaces in terms of data on the normalisation of the nodal curve and this is what figures in the equation. \end{rem}

\subsubsection {\cite[Theorem C, page 2193]{hongkumar}} {\em This is the local freeness of the spaces of twisted vacua on smooth families of curves over a smooth base as well as the existence of a projectively flat connection.} We assume that the group schemes in this subsection are {\em generically split and not just tamely split} since our approach is a reduction to the work of Faltings \cite{faltings1993}, which proves this in the split  setting.

This follows from the results in \S \ref{globalconst}, specifically Theorem \ref{projflat}, which as we have observed is a direct corollary of the theorem of Faltings \cite{faltings1993}. A {\em drawback in our article} is that we need to assume that the genus of the curves in our family is at least $3$, a bound forced on us since we derive this from a result of Faltings, where this bound is required for computing codimensions of loci in the moduli stacks.

\subsubsection {\cite[Theorem D, page 2194]{hongkumar}} {\em This result extends the previous one and shows the local freeness of the space of twisted vacua over the Harris-Mumford compactification of the Hurwitz moduli space.}

As is written in {\em loc.cit}, the authors follow the strategy in \cite{looi} and extend it to the equivariant case. The equivariant case handled in this manner namely, via Galois covers $Y \to X$ together with a global homomorphism $\Gamma \to \text{Aut}(\mathfrak g)$ is restricted. The parahoric approach uses only the local nature of the equivariant approach.  

Our approach to constructing the bundle of vacua on the Hurwitz space is by a {\em reduction}, of the equivariant case via the parahoric approach to the basic theorem \cite[Theorem 6.2.1]{tuy} of TUY. This essentially follows by using the branch morphism from the Hurwitz stack to the base DM stack of curves and pulling back the underlying space of sections. 

More precisely, following the notations in \cite{bertin-romagny}, let us fix the data of the universal curve $(\mathcal X, \cR)$ with markings over the DM stack $\ol{\mathcal {M}}_{_{g,\cR}}$, where $n = |\cR|$. Let us fix the parahoric data $\boldsymbol \theta$ which by \cite{bs} gives rise to stable curves of genus $\tilde{g}$ with markings $\tilde{\cR}$ and Galois group $\Gamma$ with local ramification indices $\xi$ at the markings $\tilde{\cR}$. With this data $(g,\tilde{g},\cR,\xi,\Gamma)$ we have the compactified Hurwitz stack together with the {\em discriminant or ramification morphism} \cite[Proposition. 6.5.2]{bertin-romagny} 
\begin{equation}
\delta:\ol{\mathcal HM}_{_{\tilde{g},\Gamma,\xi}} \to \ol{\mathcal {M}}_{_{g,\cR}}
\end{equation}
The local freeness of the vector bundle of spaces of vacua over $ \ol{\mathcal {M}}_{_{g,\cR}}$ follows immediately from the corresponding result for the Iwahori group which is the theorem  \cite[Theorem 6.2.1]{tuy} of Tsuchiya-Ueno-Yamada. This theorem shown for the local deformation spaces, implies the local freeness of the sheaf of vacua over the moduli stack $ \ol{\mathcal {M}}_{_{g,\cR}}$. The spaces of parahoric vacua, as we have observed, is essentially a sub-class of the spaces of vacua for the Iwahori group scheme and hence the local freeness of sheaf of parahoric vacua follows. Pulling back this bundle of vacua by $\delta$ gives a locally-free sheaf on $\ol{\mathcal HM}_{_{\tilde{g},\Gamma,\xi}}$ which fits the corresponding space of twisted or equivariant vacua in a locally-free sheaf.

\subsubsection {\cite[Theorem E, page 2195]{hongkumar}}  Let $\rho:\Gamma \to \text{Aut}(G)$. Let $\mathcal{G}^{^{\rho}}$ denote the {\it parahoric Bruhat--Tits group scheme} on $X$ which comes with a homomorphism $\rho: \Gamma \ra \Aut(G)$ and  is given by the global group functor:  
$$U \rightsquigarrow G(U \times_{X} Y)^{\Gamma}$$ (see \cite[Definition. 11.1]{hongkumar}). Then, \cite[Theorem E, page 2195]{hongkumar} is subsumed in Corollary \ref{thmE}, when applied to the parahoric group scheme $\cG = \cG^{^\rho}$. For the restricted nature of the group schemes like $\cG^{^\rho}$ we refer the reader to \S \ref{clarifications}, Theorem \ref{ce}, Corollary \ref{moregen1}, and Corollary \ref{moregen}.

\subsubsection{On the hypothesis $|\Gamma|$ divides central charge $c$ in \cite[Remark 12.2 b)]{hongkumar}} In {\em loc.cit}, the authors make the remark that the assumption {\em ``the order $|\Gamma|$ divides $c$ cannot be dropped"}. 

In the set-up of \cite{bs}, we have $\ell = 1$ \eqref{f} ($\ell$ is $c$ in the notation of \cite{hongkumar}) when
\begin{enumerate}
\item $G$ is of type $A_n$,
\item the moduli of $G$-bundles with parabolic structures,
\item when each facet of $\cG$ has a hyperspecial vertex. 
\end{enumerate}
 Thus, the condition $c=1$ forces $\Gamma$ to be trivial i.e. the theory of twisted conformal blocks reduces to the untwisted one and the $\mathcal{P}arbun_{\mathcal{G}}$ of \cite[\S 11,12]{hongkumar} reduces to the usual moduli of parabolic $G$-bundles. The third case above occurs exactly when the closure of each facet does not contain the origin of $\mathbf{a}$. This happens in the case of vector bundles of fixed determinant.

To cover non-trivial parahoric cases when $G=A_n$, the order of $\Gamma_y$ {\em must be} a multiple of $n+1$ by \cite[\S 10.2]{me}. Whereas in the present article, there is no constraint on $c$ (or $\ell$ in our notation), the central charge in \cite{hongkumar} must be a multiple of $n+1$. Thus the case when the  degree of the underlying vector bundle is not a multiple of the rank lies outside the ambit of \cite{hongkumar}.

The case $c=p$, for a prime $p$ forces $\Gamma$ to be trivial or the cyclic group of order $p$. In this case, all local isotropy representations get simultaneously fixed to be $\rho: \Gamma \ra G$ and are thus equal. This forces the constancy of {\em local types}, while a natural feature of parabolic structures of Mehta-Seshadri, or even in \cite{bs}, is that local types may vary with points in $\cR$. The local isotropy representations may already be different  in the classical case $X=\mathbb{P}^1$ and $|\Gamma|=2$ studied by Ramanan and Bhosle in \cite{ramanan80}, \cite{bhosle84} and \cite{bhoslecomp84}. More precisely, in these articles the isotropy representation is referred to as the $-1$ eigenspace and in general they are allowed to vary with the Weierstrass points.

The example \cite[Remark 19(4)]{heinloth} is a miscalculation (see \cite[Remark 2.1, page 16]{zhu}). It is cited in \cite[Remark 12.2 b)]{hongkumar} as justification of the above hypothesis.

\section{On affine flag varieties and their Picard groups in the tamely split case}
{\it Unless otherwise stated, we assume $k$ to be an arbitrary field.}
The aim of this section is to make some observations on the Picard groups of affine flag varieties associated to groups which are only tamely split. We elaborate on Faltings' remark (see \eqref{faltingsandcc1}). This relies on some key facts from Pappas-Rapaport \cite[Section 7, Section 10]{pradv}. 

\subsubsection{Recall of preliminaries from \cite[Section 7b]{pradv}} \label{prelim}

Let $\cG$ be a parahoric group scheme on $X$ as in \S \ref{HeckeModdiag}. Let $K$ denote a local field which in this article will be used later for $K_x$. We now recall the general discussion in \cite[Section 7b, Proposition 7.1]{pradv} (see also \cite[\S 2.1]{zhu}) on the case when $\cG_{K}$ splits over a tamely ramified extension $\tilde{K} / K$. Let $$\Gamma= \text{Gal}(\tilde{K}/K).$$ 
Let $\tau$ denote the action of $\Gamma$ on $\tilde{K}$.  Let $H$ be a split  group scheme over $\mathbb{Z}$  such that over $\tilde{K}$ we have an isomorphism $$\cG_{K} \otimes \tilde{K} \simeq H \otimes_{\mathbb{Z}} \tilde{K}.$$ 
It is possible to choose the above isomorphism such that the natural $\Gamma$--action on $\text{Res}_{K} \left( \cG_{K} \otimes \tilde{K} \right)$ transfers to $\text{Res}_{K} \left( H \otimes \tilde{K} \right)$ as $\sigma_0 \otimes \tau$ where $\sigma_0$ is an action of $\Gamma$ on the Dynkin diagram of $H$. In the following we shall fix such an isomorphism.

Let $\tilde{\cO}:=\cO_{\tilde{K}}$. Then by \cite[7.1]{pradv}  for any point $$v \in {\mathcal B}(\cG_{K},K) \subset {\mathcal B}(H,\tilde{K}),$$  denoting $(\cG_{K})_{v}$ (resp. $( H_{\tilde{K}} )_v$) the corresponding parahoric group scheme  over $\cO$ (resp. $\tilde{\cO}$), we have a natural isomorphism
\begin{equation} \left(\cG_{K} \right)_{v} \simeq \left(\left( Res_{\tilde{\cO}/\cO} \left( \left( H_{\tilde{K}} \right)_v \right)\right)^{\Gamma} \right)^o.
\end{equation}

Furthermore, the affine flag scheme $$\cF \ell_v^{\cG}:= L\cG_{K}/ \left( L^+ \left(\cG_{K} \right) \right)_{v}$$ can be identified with the connected component of the $\Gamma$--fixed point subscheme of $$\cF \ell^{H}_{v} := LH/(L^+H)_v.$$ In particular, in the tame case, we have a closed immersion of sub-ind-scheme
 \begin{equation}\label{flagincl} \cF \ell_v^{\cG} \hookrightarrow \cF \ell^{H}_{v}.
 \end{equation} 
\begin{lem}\label{papparappa}(\cite[Section 10.a.1]{pradv})
The induced map in Picard groups associated to inclusion  \eqref{flagincl} of flag varieties is an isomorphism. 
\end{lem} 
\begin{proof} The inclusion \eqref{flagincl} induces a map of the Picard groups and this has been elaborated in \cite[10.7]{pradv}. Barring one delicate case, namely when $G$ is of type $\text{A}_{{2n}}^{{(2)}}$, where there is an element of ambiguity, in every other type it is shown in {\em loc.cit} that we get an isomorphism of the Picard groups. The delicate case  was later  sorted out by Zhu (see \cite[Remark 2.1]{zhu} which refers to Proposition 4.1  {\em loc.cit}.). This can possibly be handled using Kac \cite[page 63]{kac}. Hence, the map on Picard groups is an isomorphism in all cases. \end{proof}

\subsubsection{Elaboration on Falting's remark \cite[Remark on page 67]{faltings}} \label{elaboration} Throughout this article, we have used Theorem \ref{faltingsO(1)} whose proof beyond the split case is simply a  remark in {\em loc.cit} (see \eqref{faltingsandcc1}). The aim of this paragraph is to deduce the tamely-split case in a self-contained manner from the split case, and, \cite[Remark 5.2.8]{bs} in characteristic $0$ and \cite[Theorem 1.1]{pr2024} in positive characteristics. The proof  is by constructing a suitable Galois cover of $X$ such that there is a branch point with a single ramification point lying over it. 

We now take $\Hfr$ to be an absolutely simple, semisimple simply connected tamely split group scheme on $X$ as in Theorem \ref{faltingsO(1)}. By our tamely-split hypothesis, the restriction $\Hfr|_{k(X)}$ of the group scheme to the generic point of $X$ splits over a tamely ramified extension and the same holds for $\Hfr_{K_x}$ for any closed point $x \in X$.  By Pappas-Rapoport \cite[\S 7.a]{pradv}, the degree of the extension over $K_x$ can only be $1,2$ or $3$. In {\em loc.cit}. the list of groups from \cite[\S 4]{corvallis} (or \cite[\S 4.1-4.2]{bt}) has been summarised. Let $H$ denote the split Chevalley group scheme such that over an extension $\tilde{K}_x$ of $K_x$ we have an isomorphism
\begin{equation}\label{flagmathfrakh}
\Hfr_{K_x} \otimes \tilde{K}_x \simeq H \otimes_{\mathbb{Z}} \tilde{K}_x.
\end{equation}

For simplicity, we first assume that $\text{char}(k) = 0$ and towards the end of the construction we will make remarks regarding the generalisation in positive characteristics. As in \S \ref{conformalstory}, over fields of characteristic $0$ by \cite[Theorem 5.2.7, Remark 5.2.8]{bs}, $\Hfr$  is realized as an {\em invariant direct image} from a suitable Galois cover $\psi:Y \to X$ with Galois group $\Gamma$ of a semisimple group scheme $\cH'$ such that
\begin{enumerate}
\item[a)] the generic fibre  $\cH'_{k(Y)}$ is split and isomorphic to $H \otimes_{_{k(X)}} k(Y)$,
\item[b)] restriction of $\cH'$ to the formal neighbourhood $\mathbb{D}_y$ of any closed point $y$ in $Y$ is a split group scheme; (in fact by going to a further ramified cover it can be made isomorphic to $H \otimes \Spec(\hat{\cO}_y)$).
\item[c)] $ \Hfr \simeq \psi_{_*}^{^{\Gamma}}(\cH') = \text{Res}_{_{Y/X}}^{^{\Gamma}}(\cH')$.
\end{enumerate}

Let $\cB$ denote the set of branch points of $\psi$. Now, a finite group $\Gamma$ arises as a Galois cover of $\psi: Y \ra X$ branched at $\cB$ if every prime to $p$ quotient of $\Gamma$ is a quotient of the Fuschian group associated to $(X,\cB)$. As observed above, the tamely split cases enable us to consider totally ramified covers of index $2$ and $3$ alone. Consequently, in the following, we need only $\ZZ/2$ or $\ZZ/3$ covers and if the characteristic is accordingly not equal to $2$ and $3$, this can be realised by increasing the number of branch points $\cB$ to be more than $2$.  Depending on the type of $\Hfr$ we fix such a Galois cover $\psi:Y \ra X$. This forces for $x \in \cB$, $\psi^{-1}(x)$ to be a {\it singleton} $y$.

We now fix a closed point $x \in \cB$. Let $X^*:=X - x$ and
let $Y^*:=Y - y$. Thus, as in \S \ref{conformalstory}, with $\cS$ replaced by $x$, we have
 \begin{equation} \label{LXGGamma}
 \text{Mor}(X^*,\Hfr)=\text{Mor}_{\Gamma}(Y^*,\cH').
 \end{equation}
As an aside, we remark that under assumptions on characteristics depending on the type of the group, the above setup generalises to positive characteristics by \cite[Theorem 1.1]{pr2024}. 

The aim now is to prove the following lemma. We follow the notations in \S \ref{prelim}. We begin by observing that the conclusion of the first part of the next lemma is essentially shown in \cite[10.a.1]{pradv}.

By the uniformization theorem, the stacks $\cM_Y(\cH')$ and $\cM_{_X}(\Hfr)$ are the quotient of $ \cF \ell^{\cH'}_{v}$ and $\cF \ell_{v}^{\Hfr}$ by $\text{Mor}(Y^*,\cH')$ and $\text{Mor}_{\Gamma}(Y^*,\cH')$ respectively.

\begin{lem} (see \eqref{faltingsandcc1}) The Picard group mapping  $\Pic(\cM_{_X}(\Hfr)) \hookrightarrow \Pic(\cF \ell^{\Hfr}_v)$ induced by the quotient morphism  is an isomorphism
\end{lem}
\begin{proof}

 By Theorem \ref{faltingsO(1)} for the split case, the pull-back of the ample generator $L_{\cH'}$ of $\Pic (\cM_Y(\cH'))$ to the atlas is the ample generator of $\Pic \left(\cF \ell^{\cH'}_v \right)$. These are obviously equivariant for the action of $\text{Mor}(Y^*,\cH')$.  Consider the embedding:
$$ \cF \ell_{v}^{\Hfr}  \hookrightarrow \cF \ell^{\cH'}_{v}.$$
By \eqref{papparappa} the ample generator on $\cF \ell^{\cH'}_{v}$ restricts to the ample generator on $\cF \ell_{v}^{\Hfr}$. 

Further, this restricted line bundle is clearly equivariant under $\text{Mor}_{\Gamma}(Y^*,\cH') \subset \text{Mor}(Y^*,\cH')$ and therefore descends to $\cM_{_X}(\Hfr)$. Thus,  we have constructed a line bundle $L(\cE)_{\Hfr}$ on $\cM_{_X}(\Hfr)$ such that $\tcm_{_{F}}(L(\cE)_{\Hfr}) = 1$. Whence, the inclusion $\Pic(\cM_{_X}(\Hfr)) \hookrightarrow \Pic(\cF \ell^{\Hfr}_v)$ is an isomorphism.
\end{proof}

\section{Borel-Weil-Bott }
 The aim of this section is to show analogues of the {\tt BWB} paradigm in the setting of parahoric moduli stacks. As has been the theme throughout this article, the means to do this is a {\em reduction approach}, i.e., to reduce the questions on parahoric stacks using Hecke correspondences to the reductive stack via the Iwahori stack. In this section we reduce it to the work of Teleman \cite{bwb}.
We assume in this section that all parahoric group schemes are generically split, i.e. $\cG$ restricts to $X - \cR$ as $G \times_k (X - \cR)$. 
\subsection{Bott type constructions} {\it We assume $k$ is an algebraically closed field of characteristic zero.} Recall that $A \subset X$ denotes a non-empty finite subset of atlas points and we will take a product of flag varieties corresponding to $x \in A$ as the atlas. In this section, we choose a non-empty subset $M \subset X$ which we will call {\it $\mathfrak i$-marked points, or marked points for fixing irreducible representations on them}.  We will choose irreducible representations of the fiber of $\cG$ at each $\mathfrak i$-marked point $z \in M$. In Teleman's  \cite[\S 0.]{bwb} notation, we have $\Sigma^{c} = X$, and $$\Sigma^{c} - \Sigma=A  \quad \text{and} \quad M=\{z_1,\cdots, z_m \} \subset \Sigma=X -A.$$ Let $z \in M$ denote a $\mathfrak i$-marked point. Let $\cG_z$ denote the $z$-fibre of $\cG$ and let $${\tt G}_z := \cG_{_{z,red}}$$ be the {\em reductive quotient}. Let $${\tt B}_z \subset {\tt G}_z$$ be a Borel containing the closed fibre $\cT_z$ of the maximal torus $\cT \subset \cG$. Let $C({\tt B}_z)$ denote the dominant Weyl chamber associated to $({\tt G}_z,{\tt B}_z,\cT_z)$ and $W_z$ the Weyl group. Let  $$\ell_{_z}$$ denote the {\em length function} in the Weyl group $W_z$. {\em We emphasize that the groups vary with the points $z$}.

Let $\{\lambda_z:{\tt B}_z \to \mathbb G_m \}_{z \in M}$ be a given set of {\em regular weights} and let $\{V_z\}_{_{z \in M}}$ be the corresponding set of $\tt G_z$--irreducible representations. Let 
\begin{equation} \label{ellz} \ell_{_z}(\lambda_z) := \ell_{_z}(w_z)
\end{equation} be the length of the unique Weyl group element $w_z \in W_z$ such that 
\begin{equation} 
w_z \ast \lambda_z \in C({\tt B}_z).
\end{equation}
Recall that by the classical {\tt BWB}-identification, we have
\begin{equation} \label{bwbclassical}
V_z \simeq \text{H}^{\ell_{_z}(\lambda_z)}( {\tt G}_z/{\tt B}_z, L(\lambda_z)).
\end{equation}

 The universal torsor $\cE \ra X \times \cM(\cG)$ restricted to $z \times \cM(\cG)$ gives $\cE_z$ which is a $\cG_z$-torsor. Through morphisms defining the irreducible representation $V_z$:
\begin{equation}
\cG_z \to {\tt G_z} \to \text{GL}(V_z)
\end{equation}
on the stack $\cM(\cG)$ we get vector bundles:
\begin{equation}\label{evaluationbundles}
\mathbb V_z := \cE_z(V_z).
\end{equation}
 
In this section, {\it we will work with  Iwahori structures at $\cR \cup M $ and denote the resulting moduli stack simply by $\cM^{\tt I}$} and the resulting morphism to $\cM(\cG)$ as
\begin{equation} \label{mibwb} 
\pi_{\cR \cup M}: \cM^{\tt I} \ra \cM(\cG).
\end{equation}  
Working on $\cM^{\tt I}$ and the universal Iwahori torsor $\cE^{\tt I}$, we get $\cE_z^{\tt I}$ which is a $\cG_z^{\tt I}$-torsor. Since $\cG^{\tt I}$ is the N\'eron dilatation of $\cG$ at the $\tt B_z$'s, we get the diagram:
\begin{equation} \label{comphom}
{\cG_z^{\tt I}} \stackrel{\pi_z}{\lra} {\tt B_z} \stackrel{\lambda_z}{\lra}  {{\mathbb G}_m} 
\end{equation}
and therefore on $\cM^{\tt I}$ the {\it evaluation line bundles} 
\begin{equation} \label{evaluationlinebundles}
\mathbb L_z:= \cE_z^{\tt I}(\lambda_z \circ \pi_z).
\end{equation}

\begin{lem} With $\pi_{_{\cR \cup M}}$ as in \eqref{heckecorresp} and \eqref{ellz}, we have
\begin{equation} \label{botttype1}
R^{{m}}\pi_{_{\cR \cup M,*}}(\mathbb L_z) \simeq \mathbb V_z, \text{where}~~ m = \ell_{_z}(\lambda_z).
\end{equation}
\end{lem} 
\begin{proof} We consider the Leray spectral sequence of $\pi_{_{\cR \cup M}}$ applied to the quasi-coherent sheaf $\mathbb L_z$. By the Classical {\tt BWB} theorem (see \eqref{bwbclassical}), the spectral sequence degenerates and we get the desired isomorphism. 
\end{proof}
On the affine Grassmannians $\cF l_{\tt \mathbf{a}_z}$, via $L \cG^{\tt I} \stackrel{q}\ra \cF l_{\tt \mathbf{a}_z} = L \cG^{\tt I}/L^+({\cG^{\tt I}}|_{{\mathbb{D}_z}})$ and
\begin{equation} 
L^+({\cG^{\tt I}}|_{\mathbb{D}_z}) \stackrel{\pi_z}{\lra} {\tt B_z} \stackrel{\lambda_z}{\lra}  {{\mathbb G}_m} 
\end{equation}
  we get the line bundle associated to $q$ as (see \eqref{ucentralextension}):
\begin{equation}
	\cL_z := \tilde{L} \cG^{\tt I}(\lambda_z \circ \pi_z).
\end{equation}
 By an abuse of notation, we denote the restriction of $\cL_z$ to the fibers of $\cF l_{\mathbf{a}_z} \ra \cF l_{\tt{F}_z}$ as a line bundle on the flag variety ${\tt G}_z/{\tt B}_z$ as well:
\begin{equation} 
\cL_z \ra {\tt G}_z/{\tt B}_z.
\end{equation}

\begin{lem} With $A= \{z \}$, recall the uniformization map $\tcg: \cF\ell_{{\tt F}_z} \ra \cM(\cG)$ \eqref{unif1} for the stack $\cM(\cG^{\tt I})$. We have a canonical isomorphism:
\begin{equation}\label{fromIwahoristacktoaffinegr}
\tcg^*(\mathbb L_z) \simeq \cL_z.
\end{equation}
\end{lem}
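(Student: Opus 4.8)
The statement to be proved is the compatibility \eqref{fromIwahoristacktoaffinegr}: under the one-point uniformization $\phi: LG = \cG^{\tt I}(X-\{z\}) \backslash \cF\ell_{\mathbf{a}_z} \to \cM(\cG^{\tt I})$ the evaluation line bundle $\mathbb L_z = \cE_z^{\tt I}(\lambda_z\circ\pi_z)$ pulls back to the line bundle $\cL_z = LG(\lambda_z\circ\pi_z)$ associated to the $L^+(\cG^{\tt I}|_{\mathbb D_z})$-character $\lambda_z\circ\pi_z$ via the projection $q: LG \to LG/L^+(\cG^{\tt I}|_{\mathbb D_z}) = \cF\ell_{\mathbf{a}_z}$. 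The plan is to trace the universal Iwahori torsor $\cE^{\tt I}$ through the uniformization and identify its restriction to $z\times \cF\ell_{\mathbf{a}_z}$ together with the reduction to $\tt B_z$ that defines $\mathbb L_z$.

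\smallskip

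First I would recall precisely what the uniformization atlas $q_0: \cF\ell_{\mathbf{a}_z} \to \cM(\cG^{\tt I})$ classifies: an $A$-point of $\cF\ell_{\mathbf{a}_z}$ is a $\cG^{\tt I}$-torsor on $X\times\Spec A$ together with a trivialization over $(X-\{z\})\times\Spec A$, obtained by gluing the trivial torsor on $X-\{z\}$ with the trivial $\cG^{\tt I}|_{\mathbb D_z}$-torsor via the loop-group element. Hence the pullback $q_0^*\cE^{\tt I}$ of the universal torsor comes equipped with a \emph{canonical section} over $(X-\{z\})\times\cF\ell_{\mathbf{a}_z}$, and its restriction to the formal disc $\mathbb D_z \times \cF\ell_{\mathbf{a}_z}$ — in particular to $z\times\cF\ell_{\mathbf{a}_z}$, i.e. $q_0^*\cE^{\tt I}_z$ as a $\cG^{\tt I}_z$-torsor — is described tautologically by the $L^+(\cG^{\tt I}|_{\mathbb D_z})$-torsor $LG \to LG/L^+(\cG^{\tt I}|_{\mathbb D_z}) = \cF\ell_{\mathbf{a}_z}$ evaluated at $t=0$. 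Since the Neron dilatation morphism $\pi_z: \cG^{\tt I}_z \to \tt B_z$ is built into the definition of $\cG^{\tt I}$ as the dilatation of $\cG$ along the Borels $\tt B_z$ (as recalled around \eqref{comphom} and \eqref{nerondilat}), the character $\lambda_z\circ\pi_z$ of $\cG^{\tt I}_z$ is the reduction mod $t$ of the character $\lambda_z\circ\pi_z$ of $L^+(\cG^{\tt I}|_{\mathbb D_z})$. Therefore the associated line bundle $q_0^*\cE^{\tt I}_z(\lambda_z\circ\pi_z)$ is by construction the line bundle on $\cF\ell_{\mathbf{a}_z}$ associated to the $L^+(\cG^{\tt I}|_{\mathbb D_z})$-torsor $q$ via this character, which is exactly $\cL_z = LG(\lambda_z\circ\pi_z)$. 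The only remaining point is that $q_0^*$ agrees with $\phi^*$ under the identification $\phi$; this is precisely the content of the uniformization isomorphism \eqref{unif1}, which is $\cG^{\tt I}(X-\{z\})$-equivariant, and $\cL_z$ is $LG$-equivariant for the residual action (a character of the point stabilizer descends). So the $\cG^{\tt I}(X-\{z\})$-invariance of $\cL_z$ lets it descend to $\cM(\cG^{\tt I})$ and the descended bundle is $\mathbb L_z$ by the universal property.

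\smallskip

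The structure of the argument is thus: (i) unwind the moduli description of $\cF\ell_{\mathbf{a}_z}$ as the ind-scheme of torsors-with-trivialization-away-from-$z$; (ii) identify $q_0^*\cE^{\tt I}|_{\mathbb D_z\times\cF\ell_{\mathbf{a}_z}}$ with the tautological $L^+(\cG^{\tt I}|_{\mathbb D_z})$-torsor; (iii) restrict to $z$ and post-compose with $\lambda_z\circ\pi_z$; (iv) check $\cG^{\tt I}(X-\{z\})$-equivariance so the bundle descends along $\phi$, and invoke \eqref{unif1}. This is entirely parallel to — indeed a special case of — the classical computation in \cite[\S 7.7]{me} and \cite[(4.8)]{sorger} identifying the pullback of evaluation/theta bundles to the affine flag variety; the analogous statement \eqref{botttype1} for $\mathbb V_z$ versus $\pi_{_*}\mathbb L_z$ already records the corresponding fact at the level of $\cM(\cG)$.

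\smallskip

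\textbf{Main obstacle.} The genuine content — and the step I expect to require the most care — is step (ii): verifying that the restriction to the formal disc of the universal torsor pulled back along $q_0$ really is the \emph{tautological} $L^+(\cG^{\tt I}|_{\mathbb D_z})$-torsor, with no extra twist, i.e. that the gluing construction in \eqref{glue} is set up so that the local piece at $z$ is literally $LG \times^{L^+(\cG^{\tt I}|_{\mathbb D_z})} (\text{trivial})$. This is a bookkeeping point about which trivialization is being carried along and in which direction the clutching is performed, and it is exactly the place where a sign/inverse convention could creep in; everything else (the dilatation identity $\cG^{\tt I}_z \xrightarrow{\pi_z} \tt B_z$, descent of characters, the equivariance of $\phi$) is formal given the results already quoted. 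Once (ii) is pinned down, the isomorphism \eqref{fromIwahoristacktoaffinegr} is immediate and functorial.
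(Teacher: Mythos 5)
The paper gives no proof of this lemma at all: it is stated as an immediate consequence of the constructions of $\glue_{t_z}$, the universal torsor, and the character $\lambda_z\circ\pi_z$. Your argument is correct and is precisely the standard unwinding one would supply --- identify the atlas $\cF\ell_{\mathbf{a}_z}$ with torsors trivialized away from $z$, observe that the restriction of $q_0^*\cE^{\tt I}$ to the formal disc is the tautological $L^+(\cG^{\tt I}|_{\mathbb D_z})$-torsor induced from $LG\to\cF\ell_{\mathbf{a}_z}$, and push out along the character --- so there is nothing to contrast with the paper's (absent) proof.
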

\begin{proof} Since $\cL_z$ come from characters of ${\tt B}_z$, they are of central charge zero and hence descend to $\cM^{\tt I}$. By construction \eqref{evaluationlinebundles}, we see that the descent of $\cL_z$ equals $\mathbb{L}_z$. Thus the pullback of $\mathbb{L}_z$ by the glue morphism is $\cL_z$.
\end{proof} 

\subsection{{\tt BWB} type statements}
{\it As in \cite{bwb}, we assume $k=\CC$, thus $G$ is split and $$M \subset X -A.$$} 

We denote the pull-back of the Faltings-Sorger line bundle $L_{\Hfr}:=L(\cE)_{\Hfr}$ on $\cM(\Hfr)$ of central charge one, to the Iwahori stack $\cM^{\tt I}$ by
\begin{equation}\label{faltonIwa}
{\mathfrak I}_{\Hfr}:= h_{_{\tcs}}^*(L_{\Hfr}).
\end{equation}
Recall that for $\cM(\cG)$ we use the atlas $\prod_{x \in A} \cF \ell_{\tt{F}_x}$ i.e. we take the quotient by $\cG(X - A)$ (see \eqref{unif1}). Correspondingly, by the glue map $\tcg^{\tt I}$ of \eqref{fibproddiag}, for $\cM^{\tt I}$ \eqref{mibwb} which has Iwahori sturctures at $\cR \cup M$, the following morphism will be useful as an atlas for {\tt BWB} type statements:
\begin{equation} \label{usefulgI} 
 \tcg^{\tt I} \times \Id: \prod_{x \in A \cap \cR} \cF \ell_{\mathbf{a}_x} \times \prod_{x \in A - \cR} \cF\ell_{{\tt F}_x} \times \prod_{z \in M } {\tt G}_z/{\tt B}_z \ra \cM^{\tt I}.
\end{equation}
We will abbreviate
\begin{equation}\label{aI}
\cF \ell_{_A}^{\tt I}: = \prod_{x \in A \cap \cR} \cF \ell_{\mathbf{a}_x} \times \prod_{x \in A - \cR} \cF\ell_{{\tt F}_x} \times \prod_{z \in M } {\tt G}_z/{\tt B}_z. 
\end{equation}

Let ${\mathfrak I}_{\cF} := \tcg_{_{\tt I}}^*({\mathfrak I}_{\Hfr})$ be the pull-back of ${\mathfrak I}_{\Hfr}$ to the above flag variety for $\cM^{\tt I}$. {\it In all the statements and their proofs of this section, it is enough to assume that $A$ is a singleton}.  We have the following {\tt BWB} theorem for $\cM^{\tt I}$.

\begin{thm} ({\tt BWB} for $\cM^{\tt I}$) \label{bwbforMI} Let ${\tt n} \geq 0$. For the projection $ \cM^{\tt I} \ra B\cG (X - A)$, we have the natural Leray spectral sequence  
\begin{equation} \label{ss}
\text{H}^p \left( \cG(X - A), H^q \left( \cF \ell_{_A}^{\tt I},  \cL_z^{\boxtimes_{z \in M}} \boxtimes {\mathfrak I}^{\tt n}_{\cF} \right) \right) \implies  \text{H}^{p+q} \left(\cM^{\tt I},   \mathbb L_z^{\otimes_{z \in M}} \otimes {\mathfrak I}^{\tt n}_{\Hfr} \right).
\end{equation}

Using \eqref{ellz}, we set 
\begin{equation} \label{b1} \tcb_{_{\pi}} : = \sum_{z \in M} \ell_{_z}(\lambda_z). \end{equation} 

If $j \neq \tcb_{_{\pi}}$, we have 
\begin{equation} \text{H}^j \left(\cF \ell_{_A}^{\tt I},   \cL_z^{\boxtimes_{z \in M }} \boxtimes {\mathfrak I}^{\tt n}_{\cF} \right) = 0.
\end{equation} In particular, the spectral sequence degenerates and we have:
\small
\begin{equation*}
 \text{H} ^{j} \left(\cM^{\tt I},  \mathbb L_z^{\otimes_{z \in M}} \otimes {\mathfrak I}^{\tt n}_{\Hfr} \right) =
\begin{cases}
0  &  \quad\text{if } j \neq \tcb_{_{\pi}}, \\
\text{H}^{\tcb_{_{\pi}}} \left(\cF \ell_{_A}^{\tt I},  \cL_z^{ \boxtimes_{z \in M}} \boxtimes {\mathfrak I}^{\tt n}_{\cF} \right)^{^{\cG(X - A)}}  & \quad\text{if } j = \tcb_{_{\pi}}.
\end{cases}
\end{equation*}

\end{thm} 
\begin{proof} 

Consider the projection 
\begin{equation} \label{par} p_{_{A -\cR}}: \prod_{x \in A} \cF \ell_{\mathbf{a}_x} \times \prod_{z \in M } {\tt G}_z/{\tt B}_z \ra \prod_{x \in A \cap \cR} \cF \ell_{\mathbf{a}_x} \times \prod_{x \in A - \cR} \cF\ell_{{\tt F}_x} \times \prod_{z \in M } {\tt G}_z/{\tt B}_z.
\end{equation} To compute the cohomology of line bundles, we will pull them back and we shall denote the composite morphism as 
\begin{equation} \label{gbwb} \tcg_{_{\tt I}} : =  (\tcg^{\tt I} \times \Id) \circ p_{_{A -\cR}} : \prod_{x \in A} \cF \ell_{\mathbf{a}_x} \times \prod_{z \in M } {\tt G}_z/{\tt B}_z \ra \cM^{\tt I}.
\end{equation} 

As a preparation, let us first prove that only one cohomology, namely $\text{H}^{j} \left( \prod_{x \in A} \cF\ell_{\mathbf{a}_x} \times \prod_{z \in M} {\tt G}_z/{\tt B}_z,  \cL_z^{\otimes_{z \in M}} \otimes {\mathfrak I}^{\tt n}_{\cF} \right)$ for $j= \tcb_{_{\pi}}$ possibly does not vanish.  

To prove this, {\em we begin with a single marking $z$}, and then proceed by an easy induction. Thus, the product $\prod_{x \in A} \cF\ell_{\mathbf{a}_x} \times \prod_{z \in M} {\tt G}_z/{\tt B}_z$ shrinks to $\prod_{x \in A} \cF\ell_{\mathbf{a}_x} \times  {\tt G}_z/{\tt B}_z$. Since the line bundle $\cL_z \otimes {\mathfrak I}^{\tt n}_{\cF}$ comes from $\cF\ell_{\mathbf{a}_z}$ we may assume that the product is $\prod_{x \in A} \cF\ell_{\mathbf{a}_x} \times \cF\ell_{\mathbf{a}_z}$. 

The line bundle ${\mathfrak I}^{\tt n}_{\cF}$ already comes from a dominant character of $\tilde{L}^+ \cG$ \eqref{L+}, and the line bundle $\cL_z$ when restricted to all the terms of the product $\prod_{x \in A} \cF\ell_{\mathbf{a}_x}$ is trivial. Hence, by the Kumar--Mathieu {\tt BWB} formalism for the affine flag varieties, all higher cohomologies for the line bundle ${\mathfrak I}^{\tt n}_{\cF} \otimes \cL_z$ is contributed only from the last factor, namely, $\cF\ell_{\mathbf{a}_z}$.
Since $\cL_z$ is assumed to be regular, we have a unique Weyl group element $\varpi_z$ in the Iwahori-Weyl group such that $\varpi_z * \lambda_z \in C({\tt B}_z)$. By the identification \eqref{fromIwahoristacktoaffinegr}, and the fact that the Weyl group of the reductive quotient of the closed fibre at $z$ of $\cG$ is a subgroup of the Iwahori-Weyl group \cite{pradv}, and by the uniqueness of this element which moves the {\em regular} character $\lambda_z$ to the dominant chamber, it follows that $w_z = \varpi_z$ and $\tcb_{_{\pi}} = \ell_{_z}(\lambda_z)$.  

In other words, the Kumar--Mathieu {\tt BWB} for the affine flag variety $\cF\ell_{\mathbf{a}_z}$ then says that the cohomology of the line bundle ${\mathfrak I}^{\tt n}_{\cF} \otimes \cL_z$ vanishes in all degrees except possibly $\ell_{_z}(\lambda_z)$.

We now proceed for {\em more than one marking}. For simplicity we assume that $A=\{x \}$ is a singleton. For each added marking $z'$, we have a ${\tt G}_{z'}/{\tt B}_{z'}$-fibration of Iwahori grassmannians:
\begin{equation}
\cF\ell_{\mathbf{a}_x} \times {\tt G}_{z'}/{\tt B}_{z'} \ra \cF\ell_{\mathbf{a}_x}.
\end{equation} 
 By the obvious Leray theorem, we get the following  Kunneth--like formula: let $p = \ell_{_z}(\lambda_z)$ and $q = \ell_{_{z'}}(\lambda_{z'})$
{\small
\begin{eqnarray*}
\text{H}^{p + q} \left( \cF\ell_{\mathbf{a}_x} \times {\tt G}_{z'}/{\tt B}_{z'}, \left({\tt L}^h_{\cF} \otimes  \cL_z \right) \boxtimes \cL_{z'} \right) \\ = \text{H}^{p} \left(\cF\ell_{\mathbf{a}_x}, {\tt L}^{\tt n}_{\cF} \otimes \cL_z \right) \otimes \text{H}^{q} \left({\tt G}_{z'}/{\tt B}_{z'}, \cL_{z'} \right)
\end{eqnarray*}}
Thus, by Kumar--Mathieu {\tt BWB} and the classical {\tt BWB} for ${\tt G}_{z'}/{\tt B}_{z'}$, the cohomology group vanishes in all degrees except possibly when $p + q = \ell_{_z}(\lambda_z) +\ell_{_{z'}}(\lambda_{z'})$.

By $\tcg^{\tt I} \times \Id$ of \eqref{usefulgI}, the descent spectral sequence for the natural morphism $$\cM^{\tt I} \ra B\cG(X - A)$$ is \eqref{ss}  (see \cite[page 10 (1.9) and page 27 (5.5)]{bwb} and for a proof see \cite[Propositions 7.3.1 and (7.3.5)]{me} where only the fact that \eqref{glue} has \'etale local sections (see \S \ref{uniformization}) is used). Since it is at level two, and $H^q$ survives in at most one degree, it degenerates. 

As in the proof of Teleman \cite[Theorem 3]{bwb}, the higher cohomology groups 
$$\text{H}^p \left( \cG(X - A), H^{\tcb_{_{\pi}}} \left( \cF \ell_{_A}^{\tt I} ,  \cL_z^{\otimes_{z \in M}} \otimes {\mathfrak I}^{\tt n}_{\cF} \right) \right)$$
vanish for $p \geq 1$ by \cite[1.10, 1.11 page 10-11]{bwb}. This proves the last assertion.

\end{proof} 

Let $L_{_\cG}$ be  a  line bundle  on the parahoric stack $\cM(\cG)$ of central charge \eqref{ccmodulistack} $\tcm(L_{_\cG})$. As in \S \ref{hdescriptionpullback}, $L_{_\cG}$ pulls back on $\cM^{{\tt I}}$ \eqref{mibwb} to 
\begin{equation} \label{elli} L_{_{\tt I}} := \mathbb{L}_z^{\otimes_{z \in M}} \boxtimes \pi_{\cR \cup M}^*(L_{_\cG}) ={\mathfrak I}_{\Hfr}^{\tcm(L_{_\cG})} \otimes  \mathbb{L}_{_{{\bf e}(x)}}^{\otimes_{x \in  \cR}} \otimes \mathbb{L}_{_{\lambda_z}}^{\otimes_{z \in M}}
\end{equation}  where with ${\mathfrak I}_{\Hfr}$ as in \eqref{faltonIwa}, the evaluation line bundles $L_{_{\lambda_z}}$ are given by the ${\tt B}_z$-character $\lambda_z$ and the $L_{_{{\bf e}(x)}}$'s are given by the ${\tt B}_x$-character $$ {\bf e}(x) = \sum_{\alpha \in S({\tt F}_x)} n^{x}_\alpha \omega_\alpha.$$ 

Let  ${\tt n}$ be an integer. Recall that for $\alpha_0$, by  $\omega_{\alpha_0}$ we mean the trivial weight. Thus we may extend ${\bf e}$ to  $z \in M \setminus \cR$, where the group scheme has semisimple fibers,  as the trivial weight zero, i.e.
$${\bf e}(z) = 0, \text{for} ~~~z \in M \setminus \cR.$$

 For $z \in M$, let $\ell_{_z}(\lambda_z)$ be  associated to the characters $\lambda_z$ defining $\mathbb V_z$ as in \eqref{ellz} which involves the length function in the Weyl group $W_z$.  
On the other hand,   for $z \in M \cup \cR$, we may alternatively view $\lambda_z+ {\tt n} \cdot{\bf e}(z)$ as a $B$-character of the Weyl group of the Borel subgroup $B$ of semisimple group $G = \Hfr_z$ (for all $z \in M \cup \cR$), which occurs in the fibres of the morphism $h_{_{\tcs}}:\cM^{^{\tt I}} \to \cM(\Hfr)$. 

By the usual {\tt BWB}, (under the assumption of regularity of the character $\lambda_z+ {\tt n}\cdot {\bf e}(z)$), there is a {\em unique element} of the Weyl group $W$ of $G$,  which moves it to the dominant chamber $C(B)$. Let us denote the length of this element as $$\ell_{_W}(\lambda_z+ {\tt n}\cdot {\bf e}(z)).$$ Associated to the projection $h_{_{\tcs}}$,  we set  
\begin{equation} \label{b2} \tcb_{_h} = \tcb(\lambda,{\tt n},{\bf e}) := \sum_{z \in M \cup \cR} \ell_{_W}(\lambda_z+ {\tt n} \cdot{\bf e}(z)).
\end{equation}   

For $z \in M \cup \cR$, we set 
\begin{equation} \mathbb{V}_z := \pi_{_{\tcs} *}(\mathbb{L}_z) \end{equation} 

\begin{Cor}\label{parahoricbwb} ({\tt BWB} for $\cM(\cG)$) Let $\tt{n} \geq 0$. Assume $\cR \cup M \subset X - A$. With the gluing morphism  $$\prod_{x \in A} \cF \ell_{{\tt F}_x} \times \prod_{z \in M} {\tt G}_z/{\tt B}_z  \stackrel{p_1}{\lra} \prod_{x \in A} \cF \ell_{{\tt F}_x} \stackrel{\tcg}{\lra} \cM(\cG),$$    if all $\{ \lambda_z| z \in M \}$ are regular in $(\tt{G}_z,\tt{B}_z)$ and all $\{ \lambda_z+ {\tt n}\cdot {\bf e}(z) | z \in M \cup \cR \}$ are regular in $(G,B)$, we then have the following:
\footnotesize
\begin{equation*}
 \text{H} ^{j} \left(\cM(\cG),   \mathbb V_z^{\otimes_{z \in M}} \otimes L_{_\cG}^{\tt n} \right) =
\begin{cases}
0   & \text{if } j \neq \tcb_{_h}, \\
\text{H}^{\tcb_{_h}} \left(\prod_{x \in A} \cF\ell_{{\tt F}_x} \times \prod_{z \in M} {\tt G}_z/{\tt B}_z,   \cV_z^{\otimes_{z \in M}} \otimes \tcg^* L_{_\cG}^{\tt n} \right)^{^{\cG(X - A)}}  & \text{if } j = \tcb_{_h}.
\end{cases}
\end{equation*}
\end{Cor}
\begin{proof} Consider $\pi_{_{\cR \cup M}}: \cM^{\tt I} \ra \cM(\cG)$ of \eqref{mibwb}. We have $$\mathbb{V}_z^{\otimes_{z \in M}} \otimes L_{_\cG}^{\tt n}= R\pi_{_{\cR \cup M} *}\left(\mathbb{L}_z^{\otimes_{z \in M}} \otimes \pi_{_{\cR \cup M}}^* L_{_\cG}^{\tt{n}} \right).$$
Consequently, owing to the shifting by $\ell_z(\lambda_z)$ of \eqref{botttype1} for each $z \in M$ we have the following isomorphism of groups for all $j \geq 0$:
$$H^j \left(\cM(\cG), \mathbb{V}_z^{\otimes_{z \in M}} \otimes L_{_\cG}^{\tt n} \right) = H^{j+\tcb_{_\pi}} \left(\cM^{\tt I}, \mathbb{L}_z^{\otimes_{z \in M}} \otimes \pi_{_{\cR \cup M}}^* L_{_\cG}^{\tt n} \right).$$
Similarly consider $$p \times \Id: \prod_{x \in A} \cF\ell_{\mathbf{a}_x} \times \prod_{z \in M} {\tt G}_z/{\tt B}_z \ra \prod_{x \in A} \cF\ell_{{\tt F}_x} \times \prod_{z \in M} {\tt G}_z/{\tt B}_z.$$
From the isomorphism $$\cV_z^{\otimes_{z \in M}} \otimes \tcg^* L_{_\cG}^{\tt n} = R(p \times \Id)_* \left(\cL_z^{\otimes_{z \in M}} \otimes  \left(\tcg \circ p_1 \circ (p \times \Id) \right)^* L_{_\cG}^{\tt n} \right),$$
we deduce an isomorphism of cohomology groups for the flag varieties as well, again with a shift of $\tcb_{_\pi}$.
With notations as above and the gluing $\tcg^{\tt I} \times \Id$  \eqref{usefulgI} of  $\cM^{\tt I}$ which has Iwahori structures at $\cR \cup M$, it follows that $$\tcg \circ p_1 \circ (p \times \Id)=  \pi_{\cR \cup M} \circ \left(\tcg^{\tt I} \times \Id \right) \circ p_{_{A -\cR}} \stackrel{\eqref{gbwb}}{=} \pi_{\cR \cup M} \circ \tcg_{_{\tt{I}}}.$$
Thus 
$$\tcg_{_{\tt{I}}}^* \left(\mathbb{L}_z^{\otimes_{z \in M}} \boxtimes \pi_{\cR \cup M}^* \left(L_{_\cG}^{\tt{n}} \right) \right)=\cL_z^{\otimes_{z \in M}} \otimes  \left(\tcg \circ p_1 \circ \left(p \times \Id \right)\right)^* L_{_\cG}^{\tt n}.$$
Therefore, since $\cR \cup M \subset X - A$, letting $\cR \cup M$ play the role of $M$, we are in the position to apply \ref{bwbforMI} . The degree evaluates to $\tcb_{_h}$ by the following: $$ \mathbb{L}_z^{\otimes_{z \in M}} \boxtimes \pi_{\cR \cup M}^* \left(L_{_\cG}^{\tt{n}} \right) \stackrel{\eqref{elli}}{=} \left({\mathfrak I}_{\Hfr}^{ \tcm \left(L_{_\cG} \right)} \otimes  \mathbb{L}_{_{{\bf e}(x)}}^{\otimes_{x \in  \cR}} \right)^{\tt{n}} \otimes \mathbb{L}_{_{\lambda_z}}^{\otimes_{z \in M}}.$$

\end{proof} 

\noindent

\subsubsection{Hecke transforms of higher cohomology} We get the following generalisation of Hecke transform of vacua \eqref{Hecke transformisom}.
 
 Let $z$ be a marked point in $M \cup \cR$.
Switching the role of $\pi_{_{\tcs}}$ by $h_{_{\tcs}}$ (see \eqref{heckecorresp}), let $B \subset G = \Hfr_z$ denote a Borel of the fiber of $\Hfr$ at $z$. As in \eqref{comphom}, the composite $${\cG_z^{\tt I}} \stackrel{\pi_z}{\lra} B \stackrel{\lambda_z + {\tt n}\cdot {\bf e}(z)}{\lra}  {{\mathbb G}_m} $$  gives as in \eqref{evaluationlinebundles} the evaluation line bundle $$\mathbb H_z:= \cE_z^{\tt I} \left( \left(\lambda_z +  {\tt n} \cdot {\bf e}(z) \right) \circ \pi_z \right) = \mathbb{L}_z \otimes L_{_{{\bf e} \left(z \right)}}^ {\tt n} $$  
For gluing functions $\tcs$ now for $x \in \cR \cup M$, consider the Hecke map 
\begin{equation} h_{_{\cR \cup M}}: \cM^{\tt I} \ra \cM(\Hfr).
\end{equation} 
For $z \in M \cup \cR$, set $$\mathbb{W}_z:= h_{_{\cR \cup M} *} (\mathbb{H}_z).$$

\begin{Cor} Assume $\cR \cup M \subset X - A$. For every $ {\tt n} \geq 0 $ and every line bundle $L_{_{\cG}}$ on $\cM(\cG)$ of central charge $\tcm(L_{_\cG})$ and for every family $\{\lambda_z\}_{z \in M}$ and $\{\lambda_z+  {\tt n} \cdot {\bf e}(z)\}$ of regular weights, we have a natural isomorphism:
\begin{equation}
\text{H} ^{*-\tcb_{_{\pi}}} \left(\cM \left(\cG \right),   \mathbb V_z^{\otimes_{z \in M}} \otimes L_{_\cG}^ {\tt n} \right)  =
\text{H}^{*- \tcb_{h}} \left( \cM \left(\Hfr \right), \mathbb{W}_z^{\otimes_{z \in M}} \otimes L_{\Hfr}^{ {\tt n}  \tcm(L_{_\cG})} \right).
\end{equation}

\end{Cor}
\begin{proof} We consider the natural morphisms $\pi_{\cR \cup M}$ (resp. $h_{_{\cR \cup M}}$). Let $L_{_{\tt I}}$ \eqref{elli} be the line bundle on $\cM^{\tt I}$. As in the proof of Corollary \ref{parahoricbwb}, the cohomology group after a shift of $\tcb_{_{\pi}}$ (resp. $\tcb_{_{h}}$) may be 
identified with $\text{H} ^{*} \left(\cM \left(\cG^{\tt I} \right),   \mathbb{L}_z^{\otimes_{z \in M}} \otimes L_{_{\tt I}}^{\tt n} \right)$.
\end{proof}
\begin{rem} In this Hecke transform is involved the varying length functions $\ell_{_z}$ associated to the Weyl groups along the fibres of $\pi_{_{\tcs}}$ coming in $\tcb_{_{\pi}}$, versus the uniform single length function $\ell_{_W}$ along $h_{_{\tcs}}$ involved in $\tcb_{h}$.
\end{rem}

\section{On a theorem of Faltings \cite{faltings1993} on  projectively flat connections} \label{globalconst}

Let $q:C \ra S$ be a  family of smooth projective curves parametrized by a smooth scheme $S$. Let us fix a set $y_S \in C(S)$ of disjoint sections and for each section we choose a facet.  Let $\cG \ra C$ be a parahoric  group scheme over $C$ with parahoric structures at a section given by the chosen facet i.e.  $\cG \ra C$ has constant parahoric type across $S$. {\em We also assume that $\cG$ is of the generically split type.}

\subsection{Hecke-modification over $S$} With notations as above,
all the constructions of \S \ref{HeckeModdiag} generalise to give the sought morphisms of group schemes over $C$ leading to a Hecke-correspondence diagram {\em associated to the choice of sections} $y_{_S}$ 
of algebraic stacks over $S$.  Further, it is easily seen that the morphisms $\pi_S$ and $h_S$ are \'etale locally trivial fibrations.

We will from now on drop the subscript $S$ for the stacks.
\subsection{Projective flatness} Let $q: \cM(\cG) \ra S$ denote the structural morphism.
\begin{thm} \label{projflat} Let $g(C) \geq 3$. Let $L$ be a line bundle on $\cM(\cG)$ and let $\mathbb V_z$ be evaluation bundles \eqref{evaluationbundles} at the markings $\cR$. Then $q_{_*}(L \otimes_{z \in \cR} \mathbb V_z)$ is a vector bundle on $S$ which has a canonical Hitchin connection which is projectively flat.

\end{thm}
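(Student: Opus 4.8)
The strategy is to reduce the statement to the corresponding result on the moduli stack of torsors under the constant group scheme $G$ (or the semisimple spreading-out $\Hfr$), where the existence of the Hitchin connection with projective flatness is the classical theorem of Faltings \cite{faltings1993}. The bridge is the Hecke-correspondence diagram over $S$ set up in this section together with the pushforward identities of \S\ref{globalconst} and the cohomological comparison \eqref{isocohgps}.

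\medskip

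First I would pass from $\cM(\cG)$ to the Iwahori stack $\cM^{\tt I}$ over $S$ via $\pi_{_{\tcs}}: \cM^{\tt I} \to \cM(\cG)$. Using \eqref{botttype1} (the relative version $\pi_{_*}(\mathbb{L}_z) \simeq \mathbb{V}_z$ over $S$) and the projection formula, together with the fact that the fibres of $\pi_{_{\tcs}}$ are products of flag schemes so that $R^{>0}\pi_{_{\tcs}*}$ of the relevant bundles vanish and $\pi_{_{\tcs}*}\mathcal{O} = \mathcal{O}$, one gets
\begin{equation*}
q_{_*}\big(L \otimes_{z \in \cR} \mathbb{V}_z\big) \;\simeq\; q_{_*}\big(\pi_{_{\tcs}}^* L \otimes_{z \in \cR} \mathbb{L}_z\big),
\end{equation*}
as sheaves on $S$, compatibly with base change in $S$; in particular the right side is locally free of the same rank, which establishes the vector bundle claim (using that the rank is locally constant, e.g. by the degeneration of the relevant spectral sequence as in the BWB section, or by classical flatness of conformal blocks in a family). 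Next I would push forward further along $h_{_{\tcs}}: \cM^{\tt I} \to \cM(\Hfr)$: by \eqref{splitsurjective} the line bundle $\pi_{_{\tcs}}^* L$ is, up to twisting by evaluation line bundles of central charge zero at the markings, a power of $h_{_{\tcs}}^* L_{\Hfr}$, so another projection-formula computation identifies $q_{_*}\big(\pi_{_{\tcs}}^* L \otimes_{z} \mathbb{L}_z\big)$ with a pushforward from $\cM(\Hfr)$ of $L_{\Hfr}^{n\ell h}$ tensored with evaluation bundles $\mathbb{W}_z$ attached to $\Hfr_z$-representations, exactly as in the Corollary following \ref{parahoricbwb}.

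\medskip

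Having landed on $\cM(\Hfr)$ — equivalently, after the invariant-direct-image description \eqref{papparappakabaap}, on a stack of $G$-bundles on a ramified Galois cover family $\psi: \mathcal{Y} \to \mathcal{X} \to S$ equipped with level structures — I would invoke Faltings' construction \cite{faltings1993} of the Hitchin connection on the sheaf of (twisted/parabolic) conformal blocks for a family of curves, together with its projective flatness. This is where the crucial input lies: Faltings builds the connection from the second-order differential operators coming from the Sugawara/Segal construction and the Atiyah algebra of the determinant-of-cohomology line bundle, and shows the curvature is a scalar $2$-form. The $\Gamma$-equivariant refinement of this — needed because our $\cM(\Hfr)$ carries the residual $\Gamma$-action and we take invariants — is exactly the content that makes \cite{biswas-mukho-wentworth} a corollary; one checks the Faltings connection is $\Gamma$-equivariant (the Sugawara operator is $\Gamma$-invariant since it is canonically built from the Killing form) and hence descends to the invariant part, which is our $q_{_*}(L \otimes_z \mathbb{V}_z)$ under the identifications above. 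Projective flatness descends with it.

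\medskip

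\textbf{The main obstacle.} The genuinely delicate step is the compatibility of the connection with the two pushforwards $\pi_{_{\tcs}*}$ and $h_{_{\tcs}*}$: a priori the Hitchin connection is constructed directly on conformal blocks for $G$-bundles, and one must know that the isomorphism of vector bundles over $S$ produced above is \emph{horizontal} — i.e. that transporting the Faltings connection across \eqref{isocohgps} and the projection-formula identifications yields a connection on $q_{_*}(L\otimes_z\mathbb{V}_z)$ that is still of Hitchin type (a heat-type second-order operator symbol given by the Hitchin quadratic part) and still projectively flat. The cleanest way to handle this is to observe that the fibres of $\pi_{_{\tcs}}$ and $h_{_{\tcs}}$ are \emph{$S$-rigid} (flag schemes have no infinitesimal deformations and $H^0(G/B,\mathcal{O})=k$), so $R\pi_{_{\tcs}*}$ and $Rh_{_{\tcs}*}$ commute with the Kodaira–Spencer/Atiyah-algebra formalism: the relative tangent complex of $\cM^{\tt I}/S$ maps quasi-isomorphically onto that of $\cM(\Hfr)/S$ after pushforward on the relevant cohomology, so the Atiyah sequence of $L \otimes_z \mathbb{V}_z$ over $S$ is identified with that of $L_{\Hfr}^{n\ell h} \otimes_z \mathbb{W}_z$ over $S$, and the connection and its curvature transport verbatim. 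I would organise the proof around this rigidity observation, treat the general $\cG$ by the reduction just described, and cite \cite{faltings1993} for the base case and \cite{biswas-mukho-wentworth} only to point out that the parabolic case is the specialisation $\Gamma$ trivial on a single marking.
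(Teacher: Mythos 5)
Your overall skeleton (transfer the problem to a stack where Faltings' construction applies, then carry the connection back across an identification of pushforwards on $S$) is the right instinct, but you take a longer route than the paper and in doing so you both create an unnecessary obstacle and miss the step that actually carries the weight. The paper stops at the Iwahori stack: since $q_{_*}(L\otimes_{z}\mathbb V_z)$ and $q_{_*}(\pi_{_{\tcs}}^*L\otimes_{z}\mathbb L_z)$ are literally the same sheaf on $S$ (your first displayed identification, which is correct), it suffices to produce the projectively flat connection on the Iwahori side, and Faltings \cite[Page 563, (ix)]{faltings1993} already does this for $\cL\otimes\mathbb L_z$ on the Iwahori (i.e.\ parabolic) stack --- there is no need to push further along $h_{_{\tcs}}$ to $\cM(\Hfr)$. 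Your entire ``main obstacle'' about horizontality across two pushforwards, and the rigidity argument you propose to resolve it, evaporate in the paper's version: a projectively flat connection transported across an isomorphism of vector bundles on $S$ is still a projectively flat connection, and that is all the theorem asserts. Separately, your $\Gamma$-equivariance discussion is a misdirection: $\cM(\Hfr)$ is a stack over $X$ itself (the Hecke modification stays on $X$), no Galois invariants are being taken at that stage, and the invariant-direct-image description \eqref{papparappakabaap} plays no role in this proof.

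The genuine gap is that you treat Faltings' connection as if it were defined on the whole stack. As the paper reads \cite{faltings1993}, the connection is constructed only over the open substack $\cM^{\tt I,o}$ of stable torsors; the paper then needs the codimension estimate of \cite[Page 562]{faltings1993} (the unstable locus has codimension $>2$, independently of the parabolic weights) and a Hartogs argument to extend the connection to sections over all of $\cM^{\tt I}$. This is where the hypotheses $k=\CC$ and genus $g\geq 3$ enter, neither of which surfaces in your proposal. Without this extension step your argument only produces a connection on the sheaf of sections over the stable locus, which is not the statement being proved. If you want to keep your route through $\cM(\Hfr)$ you would still need exactly this stable-locus-plus-Hartogs step there, so nothing is gained by the extra pushforward.
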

\begin{proof} 
Let $\mathbb L_z$ be line bundles on $\cM^{\tt I}$ as in \eqref{evaluationlinebundles}. Let $\cM^{\tt I,o}$ be the open substack of stable torsors. Albeit not expressed in this manner, but which is nevertheless an easy consequence, the result of Faltings \cite[Page 563, (ix)]{faltings1993} is that for any line bundle $\cL$ on the Iwahori stack $\cM^{\tt I}$, if we take its restriction to the stable locus $\cM^{\tt I,o}$ at the markings $\cR$, the space of sections of the line bundle $\cL \otimes \mathbb L_z$ gets a projectively flat connection. By a codimension computation, again in Faltings \cite[Page 562]{faltings1993}, the complement of the stable locus is of codimension bigger than $2$, and this codimension computation is independent of the parabolic weights chosen for stability. For this to work we assume that $k=\CC$ and the genus $g$ of curves on the fiber is at least $3$. So  by  a Hartogs argument, the space of sections on the whole stack $\cM^{\tt I}$ gets a projectively flat connection.  Whence, so does the space of sections of its direct image under the morphism $\pi_S: \cM(\cG^{\tt I}) \to \cM(\cG)$. This gives the resulting connection on $q_{_*}(L \otimes_{z \in \cR} \mathbb V_z)$ which is projectively flat. 

\end{proof}

\subsection{The regularly stable locus}
Let $\boldsymbol{\theta}$ be any choice of weights (see \cite{bs}) on $\cM(\cG)$  which induces the same weights on each moduli stack corresponding to a curve $C_s$ on the fiber over a closed point $s \in S$. Let $\cM^{rs}(\cG,\boldsymbol{\theta})$ denote the regularly stable open substack.

\begin{Cor}\label{projflatrs} The main theorem of \cite[Page 3]{biswas-mukho-wentworth} is a special case of \eqref{projflat} for the case when the parahoric structure comes from standard parabolic structures. \end{Cor}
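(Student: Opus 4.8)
The corollary asserts that the main theorem of Biswas--Mukhopadhyay--Wentworth on the existence of a projectively flat Hitchin connection on the moduli space of parabolic $G$-bundles is a special case of Theorem \ref{projflat}. Since Theorem \ref{projflat} is already established at the stack level, the work consists of (i) identifying the parabolic bundle setup of loc.cit. with a particular instance of the parahoric setup, and (ii) descending the statement from the stack $\cM^{rs}(\cG,\boldsymbol{\theta})$ to its coarse moduli space $M^{rs}(\cG,\boldsymbol{\theta})$.

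First I would recall, following Balaji--Seshadri \cite{bs} (and as used throughout \S\ref{modulistack}), that a parabolic $G$-bundle with standard parabolic structures at the marked points $\cR$ corresponds exactly to a parahoric torsor for a parahoric group scheme $\cG$ whose facet at each $x\in\cR$ is the one cutting out the chosen standard parabolic in the fibre $\cG_{x,red}$; the weights $\boldsymbol\theta$ on the parabolic side match a choice of stability weights on the parahoric side. Under this dictionary the moduli functor of $\boldsymbol\theta$-semistable parabolic $G$-bundles on the family $C\to S$ is represented by the stack $\cM(\cG)\to S$ of the theorem, and the regularly stable locus of one matches the regularly stable locus of the other. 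Thus the line bundle carrying the connection in \cite{biswas-mukho-wentworth} is, after this identification, a line bundle $L$ on $\cM(\cG)$ (twisted by evaluation bundles $\mathbb V_z$ encoding the parabolic data at $\cR$, exactly as in \eqref{evaluationbundles}).

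Next I would invoke Theorem \ref{projflat}: $q_{_*}(L\otimes_{z\in\cR}\mathbb V_z)$ is a vector bundle on $S$ with a canonical projectively flat Hitchin connection. To land on the coarse space I would use the facts already assembled in \S\ref{globalconst}: by \cite[Theorem 7.2.1]{pp} the complement of $\cM^{rs}(\cG,\boldsymbol\theta)$ has codimension $\ge 2$, and $\cM^{rs}(\cG,\boldsymbol\theta)\to M^{rs}(\cG,\boldsymbol\theta)$ is a $Z_G$-gerbe. Since $G$ is simply connected and simple, $Z_G$ is finite, so pushing forward along the gerbe (equivalently, taking the $Z_G$-invariant part of the relevant weight space) identifies sections of the appropriate line bundle on the coarse space with sections on the stack; the connection descends because it is canonical and $Z_G$ acts trivially on the projectivization. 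Hence the vector bundle on $S$ built from the coarse space carries the same projectively flat connection, which is precisely the statement of \cite{biswas-mukho-wentworth}.

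The main obstacle, I expect, is not any hard analysis but the bookkeeping in step (i): one must check carefully that ``standard parabolic structures'' on the $G$-bundle side produce exactly the facets and hence the parahoric group scheme $\cG$ for which Theorem \ref{projflat} was proved, and that the weight conventions align so that the regularly stable loci — and the codimension-$\ge 2$ estimate, which Faltings' Hartogs argument in the proof of \eqref{projflat} needs to be weight-independent — genuinely coincide. Once the moduli problems are matched and the genus/base-field hypotheses ($k=\CC$, $g\ge 3$) are imported verbatim from Theorem \ref{projflat}, the corollary is immediate; the gerbe descent to the coarse space is routine given the finiteness of $Z_G$ and the codimension bound already quoted.
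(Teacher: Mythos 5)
Your proposal is correct and follows essentially the same route as the paper: the paper's own justification is exactly the preceding paragraph of \S\ref{globalconst}, namely invoking Theorem \ref{projflat} at the stack level, the codimension-$\geq 2$ bound for the complement of the regularly stable locus from \cite[Theorem 7.2.1]{pp}, and the $Z_{_G}$-gerbe structure of $\cM^{rs}(\cG,\boldsymbol{\theta}) \to M^{rs}(\cG,\boldsymbol{\theta})$ to descend to the coarse space, with the parabolic-to-parahoric dictionary of \cite{bs} supplying the specialization. Your extra care about matching weight conventions and the $Z_{_G}$-action on the projectivization is sound but not a departure from the paper's argument.
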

\begin{proof}
Since the genus is at least three, by \cite[Theorem 7.2.1]{pp}, for any point $s$ of $S$ the moduli stack $\cM_{C_s}^{rs}(\cG_s,\boldsymbol{\theta})$ has complement of codimension at least two. This implies that $\cM^{rs}(\cG,\boldsymbol{\theta})$ has complement of codimension at least two. Let $M^{rs}(\cG,\boldsymbol{\theta})$ be the associated coarse space. Then it is well known that $\cM^{rs}(\cG,\boldsymbol{\theta}) \to M^{rs}(\cG,\boldsymbol{\theta})$ is a $Z_{_G}$-gerbe (see \cite[Proposition 7.1.1]{pp}). 
It follows that \eqref{projflat} holds now for the line bundle on the moduli {\em space} of regularly stable parahoric torsors.
\end{proof}

\section{Some clarifications on equivariant constructions} \label{clarifications}
The aim of this section is twofold. The first is  to briefly outline the setting of the main constructions in \cite{bs} (see also the recent article \cite{dh23} for variations and generalizations)  and relate it some later developments which are closely related to the present article, namely conformal blocks. Note that by \cite[Remark 5.3.3]{bs} the constructions in Theorems 5.2.7 and 5.3.1 only require an existence of a suitable Galois cover which is possible even when the base curve $X$ is $\mathbb{P}^1$ with at least three marked points or when $X$ is an elliptic curve. The second aim is to show x that the group schemes and stacks considered in \cite{bs} is a larger category than the ones considered in \cite{hongkumar} (see also \cite[page 262]{dh23}), making the results discussed here more general. Unfortunately, in the literature (see for example Pappas-Rapoport \cite[see Introduction and lines 2 and 3, page 797 and Example 2.15]{pr2024}), what is quoted as the main theorem of \cite{bs} is a weaker interpretation of Theorem 5.2.7 of {\em loc.cit.} and naturally this leads to limitations in applications.

Let $\psi: Y \ra X$ be a Galois cover with Galois group $\Gamma$. A $(\Gamma,G)$-bundle $E \ra Y$ is defined as a principal $G$-bundle together with a lift of the $\Gamma$--action on $Y$. By the Stein property of the local analytic disc $\mathbb{D}_y$ at any point on $Y$, at each ramification point $y$ of $\psi$, the restriction of $E$ to the disc $\mathbb{D}_y$ is a trivial $G$-torsor whose $\Gamma$ structure is determined by the representation $\rho_y: \Gamma_y \ra G$ of the isotropy subgroups $\Gamma_y \subset \Gamma$. These {\em local representation types} can be different for different branch points and do not in general induce a representation of the Galois group $\Gamma$ into $G$ or even $\text{Aut}(G)$. Somewhat different manifestations of this phenomenon and realization of parahoric group schemes on projective curves can be seen from \cite{dh23}.

The $(\Gamma,G)$-structure on $E$ gives on the adjoint group scheme $E(G) := E \times^{G} G$ ($G$ acting on itself by inner conjugation) the structure of a  $\Gamma$--group scheme  on $Y$. More generally, one may consider $\Gamma$--group schemes $\cG'$ on $Y$ which need not come as an inner twist of $G$.

\subsection{The group schemes $\cG_{_\mathbf{v}}$ of \cite{bs} on $\mathbb{P}^{^1}_{_\mathbb{C}}$}  

Now $k=\CC$. We consider parahoric group schemes on  $\mathbb{P}^{^1}$. Let $\cR$ be a set of markings on $\mathbb{P}^{^1}$ and let $m = |\cR|$. Let $G$ be a simple, simply connected group over $\mathbb{C}$. Fix an alcove $\mathbf{a}$, and a $m$-tuple $$\mathbf{v}=(v_x \in \mathbf{a} \mid x \in \cR)$$ of {\em weights}. Let the group scheme $$\cG_\mathbf{v} \ra \mathbb{P}^{^1}$$ be obtained by gluing $G \times (\mathbb{P}^{^1} - \cR)$ with the parahoric group scheme given by $v_x$ on the disc $\mathbb{D}_x$ by some gluing functions which lie in $G(K_x)$ for all $x \in \cR$.

In the next theorem by explicit computations we compare the cases considered in \cite{dh} and \cite{hongkumar} with those considered in  \cite{bs} (this is mentioned in  \cite{dh23} but more as a general remark).
\begin{thm} \label{ce}  With notations as above, there is an $m$-tuple $\mathbf{v}$ of zero-dimensional facets of  the alcove $\mathbf{a}$ such that there does not exist a Galois cover $\psi: Y \ra \mathbb{P}^1$ with Galois group $\Gamma$ and an equivariant $\Gamma$--structure on the constant group scheme $G_{_Y}:=G \times_\mathbb{C} Y$ such that $$\psi_{_*}^{^\Gamma}(G_{_Y})=\cG_{_{\mathbf{v}}}.$$
\end{thm}

\begin{proof} We will show that the moduli stacks of torsors on $\mathbb{P}^1$ corresponding to them are not isomorphic. For simplicity, we first assume that the outer automorphism group $\text{Out}(G)$ is trivial. We begin with the following trivial remark. Suppose we are given a $\Gamma$--equivariant structure on the constant group scheme $G_{_Y}=G \times_{_{k}} Y$. Equivalently, let $E_{_0}$ be the $(\Gamma,G)$-bundle underlying $G_{_Y}$. Then, the adjoint group scheme $E_{_0}(G)$ is the {\em constant group scheme} $G_{_Y}$.

We observe that since the $G$-torsor underlying $E_{_0}$ on $Y$ is trivial, it is also {\em semistable} in the sense of Mumford-Ramanathan. Since the Harder-Narasimhan reduction is canonical, it follows that $E_{_0}$ with any $\Gamma$--structure is {\em equivariantly semistable}. One could therefore take any data of local types $\tau_j$ at the set of ramifications of $\psi:Y \to \mathbb{P}^1$ and using this data on $E_{_0}$, the group scheme $$\psi_{_*}^{^\Gamma} (G_{_Y}) \simeq \psi_{_*}^{^\Gamma} (E_{_0}(G))$$
becomes a parahoric group scheme with suitable weights at the marked points on $\mathbb{P}^1$. Whence the moduli stack  $$\cM_{\mathbb{P}^{^1}} \left(\psi_{_*}^{^\Gamma} \left(G_{_Y} \right) \right)$$ has a semistable $\psi_{_*}^{^\Gamma}(G_{_Y})$ torsor in the sense of \cite[Definition. 6.3.4]{bs}. 

When $\text{Out}(G)$ is not trivial, (semi)stability of $\psi_{_*}^{^\Gamma} (G_{_Y})$-torsors is equivalent to $\Gamma$--(semi)stability of equivariant torsors under the equivariant group scheme $G_{_Y}$. Whence the above moduli stack has a semistable torsor under the above sense.  

We now turn to the stack $\cM_{\mathbb{P}^1}(\cG_\mathbf{v})$. Recall $\mathbf{v}=(v_x \mid x \in \cR)$. Since $v_x$ are zero-dimensional facets, $\mathbf{v}$ may be viewed as a weights in the sense of  \cite[6.1.2 (2)]{bs} for torsors in $\cM_{\mathbb{P}^{^1}}(\cG_{_{\mathbf{v}}})$.
  
\noindent
{\underline {Claim}}: {\em There exist choices of $\mathbf{v}$ such that $\cM_{\mathbb{P}^{^1}}(\cG_{_{\mathbf{v}}})$ has no semistable torsor. In particular, for such $\mathbf{v}$, for any Galois cover $\psi: Y \ra \mathbb{P}^1$, there does not exist an isomorphism of $\psi_{_*}^{^\Gamma}(G_{_Y})$ with $\cG_{_{\mathbf{v}}}$}.
\\

 For simplicity of exposition, we first consider the case where $Z_G$ is not trivial. For some $m \geq 3$, let us choose $m$ elements $c_i \in Z_G$ whose product is not identity. Let $v_i$ be the hyperspecial vertex of $\mathbf{a}$ corresponding to $c_i$ and $\mathbf{v}:=(v_i)$. The conjugacy class determined by $v_i$ reduces to just $c_i$. Now, by the theory of unitary representations (see  \cite[Theorem 1.0.3 (2), Corollary 8.1.8 (2)]{bs}, \cite{me2}, \cite{stability}), the moduli stack $\cM_{\mathbb{P}^1}(\cG_\mathbf{v})$ has no semistable torsor. This holds even when $\cG_{\mathbf{v}}$ is split. Note that the group scheme $\cG_{\mathbf{v}}$ is semisimple.

More precisely, for $m = 2 + n$, we take $G = E_6$ (resp. $E_7$) and note that we have at least two hyperspecial parahorics in $G(K)$. Thus,  we have elements $c_j \in Z(G)$ of order $3$ (resp. $2$).  We also have at least two non-hyperspecial maximal parahorics given by non-central elements $g_1,g_2$ whose centralizers $C_G(g_j)$ come from the two {\em distinct groups} from Borel-de-Seibenthal list of maximal semisimple subgroups of $G$. Consider the parahoric group scheme $\cG$ on $\mathbb{P}^1$ with $m$ markings $\mathbf{v}:= \{v_1, \ldots, v_n, w_1, w_2\}$ where we choose hyperspecial parahoric structure at the $n$ points $\{v_1, \ldots, v_n\}$  and two non-hyperspecial parahoric structures at $w_1, w_2$ say corresponding to finite order elements in $G$ of different orders one of which is coprime to $|Z_G|$. Choose lifts $g_1$ and $g_2$ of the conjugacy classes $w_1$ and $w_2$ respectively in $G$. By the arguments as above, it follows that $(\Pi_{_j} c_j) \cdot g_1.g_2 \neq 1$ since the centralizers of $g_j$ are distinct. Hence the stack  $\cM_{\mathbb{P}^1}(\cG_\mathbf{v})$ has {\em no semistable torsors}. 

We now consider the general case where $Z_G$ is arbitrary and may even be trivial. Let $\Delta^{ss} \subset \mathbf{a}^{m}$ denote the set of weights $\mathbf{w} \in \mathbf{a}^{m}$ such that $\cM_{\mathbb{P}^1}(\cG_\mathbf{w})$ with weight $\mathbf{w}$ has a semistable torsor. This is called the semistability polytope because by \cite[Corollary 4.13]{mw} of Meinrenken-Woodward, one knows that $\Delta^{ss}$ is a {\it closed convex}  polytope and a subset of $\mathbf{a}^{m}$ of maximal dimension. If every $m$-tuple $\mathbf{v}$ of zero-dimensional facets in $\mathbf{a}$ belongs to $\Delta^{ss}$, by convexity $\Delta^{ss}$ would be equal to $\mathbf{a}^{m}$. However, by Teleman-Woodward \cite{tw} for $m \geq 3$, in general  $\Delta^{ss} \subsetneq \mathbf{a}^{m}$.  Consequently the  $\mathbf{v}$ lying outside $\Delta^{ss}$ give moduli stacks  $\cM_{\mathbb{P}^1}(\cG_\mathbf{v})$ having no semistable torsors. This completes the proof of the claim and the theorem.

\end{proof}

\begin{Cor} \label{moregen1} The group schemes $\cG$ considered by \cite{bs}, even split ones,  form a larger class than their description in  \cite[Introduction]{pr2024} and \cite[Introduction]{swarnavatanmay} as well as the ones in \cite[Definition. 11.1]{hongkumar} and \cite[\S 3]{hongkumar2}. When $Z(G)$ is non-trivial, there are even split semisimple group schemes of this type.
\end{Cor}

\begin{Cor} \label{moregen} Moduli stacks considered by \cite{pr}, \cite{heinloth} and \cite{bs} are a larger class than those considered by \cite{hongkumar}, \cite{hongkumar2} and \cite{swarnavatanmay}, even when the group scheme $\cG$ on $X$ is split. When $Z(G)$ is non-trivial, this holds also for  semisimple group schemes $\cG$.
\end{Cor}
\begin{proof} The group schemes in \cite{pr} and \cite{heinloth} are clearly as general as \cite{bs}. Now the claim follows from Theorem \ref{ce}.
\end{proof} 
\subsubsection{On \cite{dh}}\label{clarifications1} The article \cite{dh} appeared before we put up this article in the math arxiv. However, both the motivation as well as the objects considered have some distinctions which we wish to briefly dwell on here. 
\begin{itemize}
\item As has been shown in this section, the setting in \cite{dh} is limited by the defining group scheme (see page 2, third paragraph loc. cit.) and the corresponding stack of torsors. To be more specific, the setting in most parts of {\em loc.cit}, for instance, in \cite[Section 4]{dh},  is exactly that of \cite[Section 11, Definition. 11.1]{hongkumar} where the authors themselves say that it provides only {\em a class of examples of parahoric Bruhat--Tits group schemes}, whereas, working on the base curve, the parahoric setting we have chosen is most general. 
\item It seems that \cite[Latest archival version]{dh} are unaware of the work \cite{me} by Pandey  where several of the themes touched on have been dealt with in the split case and in the present article in the tamely split case. Here below we cite a few.
\begin{itemize}
\item   The conjecture on \cite[Page 8]{dh} is already proven in Proposition \ref{redtoIwa} above. In fact, \cite[equation (12)]{dh}  is proven in \eqref{redtoIwa}.

\item In the case $G$ is split, \cite[Theorem D, introduction]{dh} is already proven in \cite[Proposition  7.7.1 (page 28), Theorem 7.11.1 page 34]{me},  firstly for one ramification point and later for multiple ramification points.

\item The equation \cite[Equation(13) on Page 8]{dh} is precisely \cite[between Theorems 1.0.2 and 1.0.3 on page 2]{me}.

\item The proof of Pappas-Rapoport conjecture in \cite{dh} together with the conjecture on \cite[Page 8]{dh} clearly deals with a   smaller class of objects and spaces than the ones considered in \cite{bs} as elaborated in Theoerem \ref{ce}. 

\end{itemize}
\end{itemize}

\bibliographystyle{alpha}
\bibliography{Verlinde}

\end{document}